\newtheorem{theorem}{Theorem}[section]
\newtheorem{lemma}[theorem]{Lemma}
\newtheorem{definition}[theorem]{Definition}
\theoremstyle{definition}  
\newtheorem{example}{Example}[section]
\newtheorem{remark}{Remark}
\def\ps@pprintTitle{%
   \let\@oddhead\@empty
   \let\@evenhead\@empty
   \let\@oddfoot\@empty
   \let\@evenfoot\@empty
}
\begin{document}

\allowdisplaybreaks
\begin{frontmatter}
	\title{
  \textbf{The Onsager-Machlup functional for distribution dependent SDEs driven by fractional Brownian motion}\tnoteref{1}
}

\author[inst1]{Yanbin Zhu}
\ead{zhuyb23@mails.jlu.edu.cn}
 \author[inst1]{Xiaomeng Jiang\corref{cor1}}
 \ead{jxmlucy@hotmail.com}
 \author[inst1,inst2]{Yong Li}
 \ead{liyong@jlu.edu.cn}
 \address[inst1]{College of Mathematics, Jilin University, Changchun 130012, PR China}
 \address[inst2]{School of Mathematics and statistics and Center for Mathematics and Interdisciplinary Sciences, Northeast Normal University, Changchun 130024, PR China}
 \cortext[cor1]{Corresponding author.}
 
 \tnotetext[1]{The second author (X. Jiang) is supported by National Key R\&D Program of China  (No. 2023YFA1009200). The third author (Y. Li) is supported by National Natural Science Foundation of China (Grant Nos. 12471183, 12071175).}
   \begin{abstract}
    In this paper, we compute the  Onsager-Machlup functional for distribution dependent SDEs driven by fractional Brownian motions with Hurst 
    parameter $H\in (\frac{1}{4},1)$. In the case \( \frac{1}{4} < H < \frac{1}{2} \), the norm can be either the supremum norm or Hölder norms of order \( \beta \) with \( 0 < \beta < H - \frac{1}{4} \). In the case \( \frac{1}{2} < H < 1 \), the norms can be a Hölder norm of order \( \beta \) with \( H - \frac{1}{2} < \beta < H - \frac{1}{4} \). As an example, we compute the Onsager-Machlup functional for the stochastic pendulum equation and derive its most probable path.
    \end{abstract}
 
 \begin{keyword}
 Distribution dependent SDE\sep Onsager-Machlup functional\sep Fractional Brownian motion\\
MSC: 60H10 \sep60G22\sep 82C35
\end{keyword}

\end{frontmatter}


	\section{Introduction}

   Consider the following distribution-dependent stochastic differential equation in $\mathbb{R}$:
\begin{equation}\label{fir}
    X_t = x + \int_0^t b(X_s, \mathcal{L}_{X_s}) \, ds + B^H_t,
\end{equation}
where $\mathcal{L}_{X_s}$ denotes the law of $X_s$, and $B^H = \{B^H_t, t \in [0,1]\}$ is a fractional Brownian motion with Hurst parameter $H \in (0,1)$. When $H = \frac{1}{2}$, the process reduces to a standard Brownian motion. For $H > \frac{1}{2}$, the fractional Brownian motion belongs to the regular regime, characterized by smoother trajectories and positive correlation. In contrast, when $H < \frac{1}{2}$, it falls into the singular regime, exhibiting rougher paths with negative correlation and increased irregularity.

	The purpose of this paper is to investigate the limiting behavior of the following ratio as
	$\varepsilon\to 0$:
	\begin{equation}
		\gamma_\varepsilon(\phi) = \frac{P(\|X - \phi\| \leq \varepsilon)}{P(\|B^H\| \leq \varepsilon)}, \label{ratial}
	\end{equation}
	where $\phi$ is a function such that $\phi-x$ belongs to the Cameron-Martin space associtated with the fractional Brownian motion and $\Vert \cdot \Vert$ is a suitable norm. When the limit exists and can be expressed as $\exp(J(\phi))$, the functional $J$ is referred to as the Onsager-Machlup functional associated with  equation \eqref{fir} and the norm $\Vert \cdot \Vert$.
	
		The Onsager-Machlup functional describes the most probable paths of diffusion processes and is analogous to the Lagrangian of a dynamical system, which is used to characterize the optimal evolution trajectory of a particle along a given path. A fundamental challenge is to understand how systems transition between  states under stochastic perturbations. Given two states \(x_1\) and \(x_2\), and \(\phi\) taken from an appropriate function space that satisfies the path from the initial state \(x_1\) to the final state \(x_2\), the path that maximizes \( \exp(J(\phi)) \) can be obtained using the variational method. This path is then the most probable path for the transition from state \(x_1\) to \(x_2\). Therefore, the Onsager-Machlup functional is a deterministic quantity that characterizes the most probable transition path between the  states of the system using the variational principle. It has significant applications in physics and chemistry (see \cite{BATTEZZATI2013163,Durr1979}).

		The Onsager-Machlup functional was first introduced by Onsager and Machlup \cite{Onsager1953} in 1953 and was subsequently solved by Ikeda and Watanabe \cite{ikeda1981stochastic} for $\phi \in C^2([0,1], \mathbb{R}^d)$ under the supremum norm. Shepp and Zeitouni \cite{Shepp1992} extended the result to the case where $\phi - x$ belongs to the Cameron-Martin space.

In \cite{Capitaine1995}, Capitaine further extended this result to a broader class of natural norms on the Wiener space, including Hölder, Sobolev, and Besov norms. Moret and Nualart \cite{Nualart} studied the Onsager-Machlup functional for stochastic differential equations  driven by fractional Brownian motion. Maayan \cite{maayan2017onsagermachlupfunctionalassociatedadditive} further extended the fractional results to finite-dimensions.

Recent developments have demonstrated the expanding scope of this concept. Liu et al. \cite{LIU2023107203} derived the Onsager-Machlup functional for McKean-Vlasov SDEs in a class of norms that dominate $L^2([0,1], \mathbb{R}^d)$. Huang et al. \cite{levy} established a closed-form expression for the Onsager–Machlup functional for jump-diffusion processes with finite jump activity. Zhang and Li \cite{zhang2024onsagermachlupfunctionalstochasticdifferential} investigated the Onsager-Machlup functional for stochastic differential equations with time-varying noise in the Hölder norm, where $0 < \alpha < \frac{1}{4}$. Carfagnini and Wang \cite{Carfagnini2024} interpret Loewner energy as the Onsager-Machlup functional for {$\text{SLE}_{\kappa}$} loop measure for any fixed $\kappa\in (0,\frac{1}{4}]$.
	Li and Li \cite{doi:10.1137/20M1310539} proved that the $\Gamma$-limit of the OM functional on the space of curves is the geometric form of the FW functional in a proper time scale $T=T(\varepsilon)$ as $\varepsilon \to 0$. Liu \cite{liu2023onsagermachlupactionfunctionaldegenerate} studied the Onsager-Machlup action functional for degenerate stochastic differential equations driven by fractional Brownian motion.

	The distribution dependent equation, also known as the McKean-Vlasov equation or mean-field SDE, was first introduced by Henry P. McKean in \cite{McKean1966}. The McKean-Vlasov equation is fundamental for modeling mean-field interactions in large systems, with applications spanning physics, finance, control theory and machine learning. 	 Galeati et al. \cite{Galeati2021} studied distribution dependent stochastic differential equations with irregular, possibly distributional drift, driven by an additive fractional Brownian motion. 
	Fan et al. \cite{Fan2022} proved the well-posedness of distribution dependent stochastic differential equations driven by fractional Brownian motions, and then establish a general result on the Bismut formula for the Lions derivative by using Malliavin calculus. 
	
	This paper focuses on deriving the Onsager-Machlup functional for distribution dependent SDEs driven by fractional Brownian motions. The fractional-order distribution dependent SDE is a more general form of the standard SDE. Its advantage lies in its ability to capture memory effects and long-term dependencies within the system, as well as describe distribution-dependent randomness, thus enabling more accurate modeling of dynamic behaviors that depend on the system’s historical states in complex systems. 
	
			Compared to the standard Onsager-Machlup functional, the equation in this paper introduces a distribution dependent drift term and extends the noise from Brownian motion to the more general fractional Brownian motion. 	The first challenge lies in the Girsanov transformation. The distribution of a stochastic process depends on the probability measure on the probability space. Therefore, in the Girsanov transformation, we freeze the distributional coefficient of the drift term. Moreover, since the noise is fractional Brownian motion, the operator \( (K^H)^{-1} \), defined by the covariance kernel of fractional Brownian motion, involves fractional derivatives or fractional integrals due to this transformation, which requires significantly more intricate estimates.
  The second challenge lies in computing the conditional expectation of the term  
\[
\exp\left[\int_0^1 (K^H)^{-1}\left(\int_0^\cdot b\left(B^H_r + \phi_r, \mathcal{L}_{X_r}\right)dr\right)(s)dW_s\right]
\]  
under the condition $\Vert B^H \Vert \leq \varepsilon$. Although $\mathcal{L}_{X_s}$ depends on $B^H$, the law $\mathcal{L}_{X_s}$ remains unchanged under the small-noise constraint $\Vert B^H \Vert \leq \varepsilon$. To resolve this, we extend the approach in \cite{Nualart} by performing a Taylor expansion on $b\left(B^H_s + \phi_s, \mathcal{L}_{X_s}\right)$ solely with respect to the spatial variable.  Notably, we avoid using the Itô formula framework in \cite{LIU2023107203} due to the lack of a well-established extension of Malliavin calculus to distribution-dependent scenarios. Instead, by combining the fractional Girsanov transformation and Taylor expansion, we circumvent the need for Malliavin calculus altogether. Furthermore, by employing variational principles, we derive the Euler-Lagrange equation of the Onsager-Machlup functional.

	The paper is structured as follows. In Section 2, we establish essential notations, formulate underlying assumptions, and review key preliminary results. In Section 3, we introduce a method for simplifying the Onsager-Machlup functional via the Girsanov transformation and present our main theorem. We subsequently provide rigorous proofs for both regular and singular cases. In Section 4, we extend the Onsager-Machlup functional to the finite-dimensional case. Finally, in Section 5, we derive the Euler-Lagrange equation for the Onsager-Machlup functional and verify our results through two specific examples.

\section{Preliminaries}
	In this section, we recall some basic notations, assumptions, lemmas that will be used later.

   \subsection{Fractional calculus}
   
   We recall some definitions and results about fractional calculus .
   
   Given $p\in [1,\infty]$ and $[a,b] \subset \mathbb{R},$ we denote by 
   $L^p([a,b])=L^p([a,b],\mathbb{R})$ the space of measurable functions $f:[a,b]\to \mathbb{R}$ such that 
   \begin{equation*}
   	\Vert f\Vert _{L^p([a,b])}=\left(\int_a^b |f(t)|^p dt\right)^\frac{1}{p} <\infty
   \end{equation*} 
   with usual modification for $p=\infty.$ For simplicity, we will use $\Vert \cdot \Vert_\infty$ to denote $\Vert \cdot \Vert_{L^\infty}$.

   The space $C^\alpha_0([0,T])$ represents the set of functions in $C^\alpha([0,T])$ that satisfy 
	$f(0)=0$. Let $\Vert \cdot \Vert_\alpha$ be the 
	Hölder norm of order $\alpha$ on $C^\alpha_0([0,T])$ defined by 
	\begin{equation*}
		\Vert f \Vert_\alpha =\sup_{0\leq s<t\leq T}\frac{|f(t)-f(s)|}{|t-s|^\alpha}.
	\end{equation*}

   The following definitions of fractional integrals and fractional derivatives are from \cite[Definition 2.1]{Samko1993} and \cite[Definition 2.2]{Samko1993}.   
   
   \begin{definition}
   	Let $f\in L^1([a,b])$. The integrals 
   	\begin{align*}
   		(I_{a^+}^\alpha f)(x)&:=\frac{1}{\Gamma(\alpha)}\int_a^x (x-y)^{\alpha-1}f(y)dy,\quad x\geq a,\\
   		(I_{b^-}^\alpha f)(x)&:=\frac{1}{\Gamma(\alpha)}\int_x^b (y-x)^{\alpha-1}f(y)dy,\quad x\leq b,
   	\end{align*}
   	where $\alpha>0$, are respectively called right and left fractional integral of the order $\alpha$.
   \end{definition}
   
   For any $\alpha\geq 0,$ any $f\in L^p([a,b])$ and $g\in L^q([a,b])$ where $1/p+1/q\leq \alpha,$ we have:
   \begin{equation}
   	\int_a^b f(s)(I_{a^+}^\alpha g)(s)ds=\int _a^b (I_{b^-}^\alpha f)(s) g(s) ds.\label{ffubini}
   \end{equation}
   
   	
   	If $1 \leq p < \infty$, we denote by $I_{a^+}^\alpha(L^p)$ the image of $L^p([a,b])$ under the operator $I_{a^+}^\alpha$. Similarly, $I_{b^-}^\alpha(L^p)$ can be defined.

   	\begin{definition}
   		 Let $f\in I_{a^+}^\alpha(L^p) $, $g\in I_{b^-}^\alpha(L^p) $.
   		 Each of the expressions 
   		 \begin{align*}
   		(D_{a^+}^\alpha f)(x)&:=\left(\frac{d}{dx} \right)^{[\alpha]+1}I_{a^+}^{1+[\alpha]-\alpha}f(x),\\
   		(D_{b^-}^\alpha g)(x)&:=\left(-\frac{d}{dx} \right)^{[\alpha]+1}I_{b^-}^{1+[\alpha]-\alpha}g(x),
   	   	\end{align*}
   		 are respectively called the right and left fractional derivative.
   \end{definition}
  From \eqref{ffubini}, we deduce the formula  
\begin{equation}
  \int_a^b f(s)(D_{a^+}^\alpha g)(s)ds
  =\int _0^t (D_{b^-}^\alpha f)(s) g(s) ds,\quad 0<\alpha<1, \label{fffubini}
\end{equation}
which holds under the assumptions that \( f\in I^\alpha_{b^-}(L^p) \) and \( g\in I^\alpha_{a^+}(L^q) \) satisfy \( 1/p+1/q\leq 1+\alpha \).
   
	If $f\in I_{a^+}^\alpha(L^p) $, the function $ \phi $ such that $f=I_{a^+}^\alpha(\phi)$ is unique in $L^p $. Fractional derivatives can be regarded as the inverse operation of fractional integrals.

  When $\alpha p > 1$, functions in $I_{a^+}^\alpha(L^p)$ are $\alpha - \frac{1}{p}$-Hölder continuous. Any Hölder continuous function of order $\beta > \alpha$ has a fractional derivative of order $\alpha$, see \cite[Proposition 2.1]{Decreusefond1999}.

	Although fractional derivatives are defined as the derivatives of fractional integrals, they can also be directly expressed using the Weyl representation (see \cite[Remark 5.3]{Samko1993}): 
\begin{equation}
D^\alpha_{0^+}f(x) = \frac{1}{\Gamma(1-\alpha)} \left( \frac{f(x)}{x^\alpha} + \alpha \int_0^x \frac{f(x) - f(y)}{(x - y)^{\alpha+1}}  dy \right), \quad 0<\alpha<1,
\label{Weil}
\end{equation}
where the improper integral converges in the $L^p$ sense. Therefore, if \( f \) has Hölder continuity with an exponent strictly greater than \( \alpha \), then the fractional derivative of order \( \alpha \) exists.

  Similar to the integral operator, the fractional integral operator is also a bounded linear operator on $L^p$ spaces, see \cite[Theorem 2.6]{Samko1993} .
\begin{theorem} For any $f \in L^p([a,b])$, $1 \leq p < \infty$, one has
	 \begin{equation} \Vert I_{a^+}^\alpha(f)\Vert_{L^p} \leq \frac{(b-a)^\alpha}{|\Gamma(\alpha)|} \Vert f \Vert_{L^p}. \label{Ia} \end{equation} \end{theorem}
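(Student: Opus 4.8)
The plan is to treat $I_{a^+}^{\alpha}$ as convolution against an $L^1$ kernel and apply the generalized Minkowski (integral) inequality; equivalently one could invoke Young's convolution inequality. First I would extend $f$ by $0$ outside $[a,b]$, writing $\tilde f$ for the extension, and change variables $y=x-s$ to obtain, for $x\in[a,b]$,
\[
(I_{a^+}^{\alpha}f)(x)=\frac{1}{\Gamma(\alpha)}\int_0^{x-a}s^{\alpha-1}\,\tilde f(x-s)\,ds
=\frac{1}{\Gamma(\alpha)}\int_0^{b-a}s^{\alpha-1}\,\tilde f(x-s)\,\mathbf 1_{\{s\le x-a\}}\,ds .
\]
The purpose of the zero-extension is that the inner integrand is then defined for every $x\in[a,b]$ and $s\in(0,b-a)$, and the kernel $s\mapsto s^{\alpha-1}$ is integrable on $(0,b-a)$ precisely because $\alpha>0$.

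Next I would apply Minkowski's integral inequality in $L^p([a,b])$, pulling the $L^p$-norm inside the $ds$-integral, and then use that translation preserves the Lebesgue $L^p$-norm:
\[
\bigl\|I_{a^+}^{\alpha}f\bigr\|_{L^p([a,b])}
\le\frac{1}{|\Gamma(\alpha)|}\int_0^{b-a}s^{\alpha-1}\,\bigl\|\tilde f(\cdot-s)\,\mathbf 1_{\{s\le\cdot-a\}}\bigr\|_{L^p([a,b])}\,ds
\le\frac{\|f\|_{L^p([a,b])}}{|\Gamma(\alpha)|}\int_0^{b-a}s^{\alpha-1}\,ds .
\]
Since $\int_0^{b-a}s^{\alpha-1}\,ds=(b-a)^{\alpha}/\alpha$, this yields
\[
\|I_{a^+}^{\alpha}f\|_{L^p([a,b])}\le\frac{(b-a)^{\alpha}}{\alpha\,|\Gamma(\alpha)|}\,\|f\|_{L^p([a,b])}=\frac{(b-a)^{\alpha}}{|\Gamma(1+\alpha)|}\,\|f\|_{L^p([a,b])},
\]
which is the claimed estimate. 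As a by-product, finiteness of the right-hand side shows $(I_{a^+}^{\alpha}f)(x)$ is finite for a.e. $x$, so the operator is well defined on $L^p([a,b])$.

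There is no genuine obstacle here — it is a textbook convolution estimate — and the only point that warrants a line of care is the exchange of integration order implicit in Minkowski's inequality. I would justify it by first running the argument with $|f|$ in place of $f$ and invoking Tonelli's theorem (all integrands nonnegative), the finiteness of the double integral being exactly the statement $s^{\alpha-1}\in L^1((0,b-a))$, i.e. $\alpha>0$; the general inequality then follows since $|I_{a^+}^{\alpha}f|\le I_{a^+}^{\alpha}|f|$ pointwise. The companion bound for $I_{b^-}^{\alpha}$ is obtained by the reflection $x\mapsto a+b-x$.
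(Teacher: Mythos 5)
Your method is the standard one and is essentially the proof given in the source the paper cites (Samko--Kilbas--Marichev, Theorem 2.6); the paper itself supplies no proof, it only quotes the statement. The zero-extension, the reduction to convolution against the kernel $s^{\alpha-1}/\Gamma(\alpha)\in L^1((0,b-a))$, the use of Minkowski's integral inequality justified via Tonelli applied to $|f|$, and the translation invariance of the $L^p$ norm are all correct and complete as far as the method goes.

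The problem is your last line: the constant you derive, $(b-a)^{\alpha}/(\alpha|\Gamma(\alpha)|)=(b-a)^{\alpha}/\Gamma(\alpha+1)$, is \emph{not} ``the claimed estimate.'' The statement asserts the constant $(b-a)^{\alpha}/|\Gamma(\alpha)|$, and for $0<\alpha<1$ --- the only regime relevant to this paper, since $\alpha=|H-\tfrac12|<\tfrac12$ --- your constant is strictly larger, so your computation does not imply the inequality as written. In fact the inequality as written is false: take $[a,b]=[0,1]$, $f\equiv 1$, $p=1$, $\alpha=\tfrac14$; then $I_{0^+}^{1/4}f(x)=x^{1/4}/\Gamma(\tfrac54)$, so $\|I_{0^+}^{1/4}f\|_{L^1}=\tfrac45/\Gamma(\tfrac54)\approx 0.88$, whereas the claimed bound is $\|f\|_{L^1}/\Gamma(\tfrac14)\approx 0.28$. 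The constant in Samko et al.\ is exactly the one you obtained, $(b-a)^{\alpha}/(\alpha|\Gamma(\alpha)|)$, so the discrepancy is a transcription error in the paper's statement rather than a flaw in your argument --- but you should have flagged the mismatch instead of asserting that the two constants coincide.
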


  \subsection{Fractional Brownian motion }
  	In this section, we will introduce the basic concepts and properties of fractional Brownian motion. 
  	
  	A real valued continuous process $\{B^H_t,\  t\in [0,T]\}$ is a fractional Brownian motion with Hurst parameter $H\in (0,1)$ if it is a centered Gaussian process with covariance function 
  	\begin{equation*}
  		\mathbb{E}[B^H_tB^H_s]=\frac{1}{2}(|t|^{2H}+|s|^{2H}-|t-s|^{2H}).
  	\end{equation*}
  	
  	If $H=\frac{1}{2}$, fractional Brownian motion corresponds to classical Brownian motion. But if $H\neq \frac{1}{2},$ it is not a semimartingale nor a Markov process;  its trajectories are $P-$a.s. in $C^{H-\varepsilon}_0([0,T])$ for any $\varepsilon>0$.
  	
  Given a fractional Brownian motion \( B^H \) with Hurst parameter \( H \) on a probability space \( (\Omega, \mathcal{F}, P) \), it is always possible to construct a standard Brownian motion \( W \) such that the following canonical representation holds. As shown in equation (42) in the work of Alòs et al. \cite{Alos2001}, the fractional Brownian motion can be expressed through the integral representation:
\begin{equation}
	B^H_t = \int_0^t K^H(t,s) \, dW_s,\label{kh int}
\end{equation}
where \( K^H(t,s) \) is a square-integrable kernel given by:
\begin{equation}
K^H(t,s) = c_H (t-s)^{H-\frac{1}{2}} + c_H \left(\frac{1}{2} - H\right) \int_s^t (\theta-s)^{H-\frac{3}{2}} \left(1-\left(\frac{s}{\theta}\right)^{\frac{1}{2}-H}\right) d\theta, 
\label{kh}
\end{equation}
with 
\[
c_H = \left(
\frac{2H\Gamma\left(
\frac{3}{2}-H
\right)}
{\Gamma\left(\frac{1}{2}+H\right)\Gamma(2-2H)}
\right)^{\frac{1}{2}}.
\]   
  When $H\geq \frac{1}{2},$ $K^H$ can be express as 
  \begin{equation}
  	K^H(s,u)=c_H\alpha u^{-\alpha}\int_u^s (\theta-u)^{\alpha-1}\theta^\alpha d\theta.\label{kh1/2}
  \end{equation}
  
 For the purpose of the Girsanov transformation, we define the operator \( K^H \) corresponding to \( K^H(t,s) \). The operator \( K^H \), mapping from \( L^2([0,1]) \) to \( I_{0^+}^{H+\frac{1}{2}}(L^2([0,1])) \), is given by:  
\begin{equation}
	(K^Hh)(t) = \int_0^t K^H(t,s) h(s) \, ds.\label{kh operator}
\end{equation}

 From \cite[Lemma 10]{Nualart}, the operator \( K^H \) can be expressed using fractional integrals as follows:   
\[
(K^H h)(s) = \begin{cases}
\displaystyle
I^{1-2\alpha}_{0^+} \, s^{\alpha} I^{\alpha}_{0^+} \, s^{-\alpha} h, & H \leq 1/2, \\
\displaystyle
I^{1}_{0^+} \, s^{\alpha} I^{\alpha}_{0^+} \, s^{-\alpha} h, & H \geq 1/2,
\end{cases}
\]  
where \(\alpha = |H - \frac{1}{2}|\) and \(h \in L^2([0,1])\). The inverse operator \((K^H)^{-1}\) is then defined as:  
\begin{align}  
\notag (K^H)^{-1} h &= s^{\alpha} D^{\alpha}_{0^+} \, s^{-\alpha} D^{1-2\alpha}_{0^+} h, \quad H \leq 1/2, \\  
(K^H)^{-1} h &= s^{\alpha} D^{\alpha}_{0^+} \, s^{-\alpha} h', \quad H \geq 1/2,\label{g1/2}   
\end{align}    
for all \(h \in I^{H+\frac{1}{2}}_{0^+}(L^2)\). If \(h\) is differentiable, the operator simplifies to:  
\begin{equation} \label{l1/2}  
(K^H)^{-1} h = s^{-\alpha} I^{\alpha}_{0^+} \, s^{\alpha} h', \quad H \leq 1/2.  
\end{equation}  
Moreover, the operator \((K^H)^{-1}\) preserves the adaptability property. \subsection{Approximate limits in Wiener space}
   Let $W=\{W_t,t\in [0,1] \}$ be a Wiener process defined in the canonical probability space $(\Omega,\mathcal{F},P)$, where 
   \begin{equation*}
   	\Omega=\{f\in C([0,1],\mathbb{R} ), f(0)=0   \},
   \end{equation*}
   and $P$ is the Wiener measure.
   
   Let $CM$ be the Cameron-Martin space:
   \begin{equation*}
   	CM=\{h\in \Omega,\text{ $h$ is absolutely continuous and $h'$}\in L^2([0,1])\}.
   \end{equation*}
   
  Using the fact that the supremum norm and Hölder norm are measurable norms on the Cameron-Martin space (see \cite[Lemma 6]{Nualart}), we derive the following two important theorems for the computation of the Onsager-Machlup functional.
   
     The first theorem comes from \cite[Theorem 2]{Nualart}.
   
  \begin{theorem}
   \label{thm2}
   If $\Vert \cdot \Vert$ is the supremum norm or Hölder norm of order $\beta$ with 
   $0 < \beta < H$, then
   \begin{equation}
      \lim_{\varepsilon \to 0} \mathbb{E} \left( \exp \left( \int_0^1 h(s) \, dW_s \right) \middle| \Vert B^H \Vert < \varepsilon \right) = 1, \quad \forall h \in H.
   \end{equation}
\end{theorem}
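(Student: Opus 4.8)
The plan is to prove the two matching one-sided bounds: $\liminf_{\varepsilon\to0}\mathbb{E}[\exp(\int_0^1 h\,dW_s)\mid \|B^H\|<\varepsilon]\ge 1$ for every $h$, and $\limsup_{\varepsilon\to0}\le 1$, the latter first on a dense class of integrands and then in general. The lower bound is soft: the reflection $\omega\mapsto-\omega$ preserves the Wiener measure $P$, sends $\int_0^1 h\,dW_s$ to its negative and $B^H$ to $-B^H$, and since $\|\cdot\|$ is homogeneous the event $\{\|B^H\|<\varepsilon\}$ is invariant. Hence the conditional law of $\int_0^1 h\,dW_s$ given $\{\|B^H\|<\varepsilon\}$ is symmetric, so its conditional mean vanishes, and conditional Jensen's inequality applied to $\exp$ gives $\mathbb{E}[\exp(\int_0^1 h\,dW_s)\mid \|B^H\|<\varepsilon]\ge 1$ for every $\varepsilon>0$ and every $h\in L^2([0,1])$.

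For the upper bound I would first treat a dense subclass $\mathcal{D}\subset L^2([0,1])$, namely those $h$ for which $g:=K^Hh$ is smooth enough. By \eqref{kh int}--\eqref{kh operator}, the Wiener integral $\int_0^1 h\,dW_s$ equals, almost surely, the value at the path $B^H$ of the linear functional associated with the Cameron--Martin element $g=K^Hh$, and $\|h\|_{L^2([0,1])}$ is the Cameron--Martin norm of $g$. The claim for $h\in\mathcal{D}$ is that this functional extends to a \emph{continuous} linear functional on the Banach space $E$ on which $B^H$ lives (either $C_0([0,1])$ for the supremum norm, or $C^\beta_0([0,1])$), i.e. $|\int_0^1 h\,dW_s|\le C_g\|B^H\|$ a.s.; this is proved by transferring derivatives onto $g$ via the duality identities \eqref{ffubini}, \eqref{fffubini} and the representation \eqref{g1/2}--\eqref{l1/2} of $(K^H)^{-1}$ (in the regular regime $H\ge\tfrac12$ one uses \eqref{kh1/2} and an integration by parts, bounding by $\|B^H\|_\infty$; in the singular regime one pairs fractional derivatives of $g$ of order $|H-\tfrac12|$ against $B^H$, which uses $\beta<H$). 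Granting this, on $\{\|B^H\|<\varepsilon\}$ we have $|\int_0^1 h\,dW_s|\le C_g\varepsilon$, so $\mathbb{E}[\exp(\int_0^1 h\,dW_s)\mid\|B^H\|<\varepsilon]\le e^{C_g\varepsilon}\to1$.

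For a general $h\in L^2([0,1])$, choose $h_n\in\mathcal{D}$ with $h_n\to h$ in $L^2$, so $g_n:=K^Hh_n\to g:=K^Hh$ in the Cameron--Martin space. By Hölder's inequality with exponents $(2,2)$,
\[
\mathbb{E}\big[e^{\int_0^1 h\,dW}\mid\|B^H\|<\varepsilon\big]\le \mathbb{E}\big[e^{2\int_0^1 h_n\,dW}\mid\|B^H\|<\varepsilon\big]^{1/2}\,\mathbb{E}\big[e^{2\int_0^1 (h-h_n)\,dW}\mid\|B^H\|<\varepsilon\big]^{1/2}.
\]
The first factor tends to $1$ as $\varepsilon\to0$ by the previous paragraph. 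For the second, the Girsanov/Cameron--Martin shift $W\mapsto W+2\int_0^\cdot(h-h_n)$ sends $B^H$ to $B^H+2(g-g_n)$ and gives
\[
\mathbb{E}\big[e^{2\int_0^1 (h-h_n)\,dW}\mathbf{1}_{\{\|B^H\|<\varepsilon\}}\big]=e^{2\|h-h_n\|_{L^2}^2}\,P\big(\|B^H+2(g-g_n)\|<\varepsilon\big)\le e^{2\|h-h_n\|_{L^2}^2}\,P(\|B^H\|<\varepsilon),
\]
the last step being Anderson's inequality for the symmetric Gaussian law of $B^H$ on $E$ (the centered ball is the largest). Hence the second factor is $\le e^{\|h-h_n\|_{L^2}^2}$, so $\limsup_{\varepsilon\to0}\mathbb{E}[\exp(\int_0^1 h\,dW)\mid\|B^H\|<\varepsilon]\le e^{\|h-h_n\|_{L^2}^2}$, and letting $n\to\infty$ yields $\limsup\le1$. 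Together with the lower bound, the limit is $1$.

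The main obstacle is the middle step: establishing, for a dense class of $h$, the continuity with respect to the prescribed norm of the linear functional represented by $\int_0^1 h\,dW_s$, together with the fractional-calculus estimates this requires. This is routine in the regular regime but genuinely delicate in the singular regime, where controlling the pairing of the fractional derivatives of the smoothed Cameron--Martin element against the rough path $B^H$ is what forces the restriction $0<\beta<H$; the reflection argument, Hölder's inequality, the Cameron--Martin shift and Anderson's inequality are all comparatively soft.
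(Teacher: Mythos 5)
The paper does not prove this statement itself; it imports it verbatim from Moret--Nualart (\cite[Theorem 2]{Nualart}), resting on their Lemma~6 that the supremum and H\"older norms are measurable norms on the Cameron--Martin space. Your proposal is a correct reconstruction of exactly that classical argument (Shepp--Zeitouni/Capitaine/Moret--Nualart): symmetry of $P$ under $\omega\mapsto-\omega$ plus conditional Jensen for the lower bound, and for the upper bound a dense class of $h$ whose Wiener integrals are norm-continuous functionals of $B^H$, followed by Cauchy--Schwarz, the Cameron--Martin shift and Anderson's inequality; the factor $e^{\|h-h_n\|_{L^2}^2}$ and the limit interchange are all right. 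The one step you leave as a sketch --- norm-continuity of $\delta_h$ on a dense class --- is precisely the content of the ``measurable norm'' lemma the paper cites, and it can be handled more cheaply than your fractional-calculus route: take $h$ with $K^Hh(\cdot)=\int_0^1 R(\cdot,s)\,\mu(ds)$ for a finite signed measure $\mu$ ($R$ the covariance of $B^H$); such $h$ are dense, $\int_0^1h\,dW=\int_0^1B^H_s\,\mu(ds)$ a.s., and $|\int_0^1h\,dW|\le|\mu|([0,1])\,\|B^H\|_\infty\le|\mu|([0,1])\,\|B^H\|_\beta$, so the singular regime causes no extra difficulty here; the hypothesis $\beta<H$ is needed only so that $P(\|B^H\|_\beta<\varepsilon)>0$ and the conditioning makes sense.
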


   Before presenting the next theorem, we first introduce the definition and properties of the trace.
   \begin{definition}[\cite{Nualart}]
   	  An operator $K : L^2([0,1]) \to L^2([0,1])$ is nuclear if and only if 
\[
\sum_{n=1}^\infty |\langle K e_n, g_n \rangle| < \infty,
\]
for all $B = (e_n)_n$, $\widetilde{B} = (g_n)_n$ orthonormal sequences in $L^2([0,1])$, where $\langle\cdot,\cdot\rangle$ denotes the inner product in $L^2$. We define the trace of
a nuclear operator $K$ by
\[
\operatorname{Tr} K = \sum_{n=1}^\infty \langle K e_n, e_n \rangle,
\]
for any $B = (e_n)_n$ orthonormal sequence in $L^2([0,1])$.
   \end{definition}
   
   The definition is independent of
the sequence we have chosen. Given a symmetric function $f \in L^2([0, 1]^2)$, the
Hilbert-Schmidt operator $K(f) : L^2([0,1])\to L^2([0,1])$ associated with $f$, defined by
\begin{equation} \label{eq:operator_definition}
(K(f))(h)(t) = \int_0^t f(t, u) h(u) \, du,
\end{equation}
is nuclear if and only if $\sum_{n=1}^\infty |\langle K e_n, e_n \rangle| < \infty$ for all $B = (e_n)_n$ orthonormal sequences in
$L^2([0,1])$. If $f$ is continuous and the operator $K(f)$ is nuclear, the trace of $f$ is defined as the trace of $K(f)$, given by:  
\[
\operatorname{Tr} f = \operatorname{Tr} K(f) = \int_0^1 f(s, s) \, ds.
\]   
   
   The second theorem comes from 
   \cite[Theorem 8]{Nualart}.
  \begin{theorem}
   Let $\Vert \cdot \Vert$ be the supremum norm or the Hölder norm of order $\beta$, where $0 < \beta < H$. Let $f$ be a symmetric function in $L^2([0,1]^2)$. If $K(f)$ is nuclear, then
   \begin{equation*}
      \lim_{\varepsilon \to 0} \mathbb{E}\left(\exp\left(2 \int_0^1 \int_0^t f(s,t) \, dW_s dW_t\right) \middle| \Vert B^H \Vert < \varepsilon\right) = \exp\left(-\operatorname{Tr}(f)\right). \label{thm3}
   \end{equation*}
\end{theorem}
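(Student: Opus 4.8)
The plan is to diagonalise the symmetric nuclear operator $K(f)$ and thereby reduce the quadratic statement to the linear one of Theorem~\ref{thm2}. Since $f$ is symmetric and $K(f)$ is nuclear, the spectral theorem provides an orthonormal basis $(e_n)_{n\ge1}$ of $L^2([0,1])$ and real numbers $(\lambda_n)_{n\ge1}$ with $\sum_{n\ge1}|\lambda_n|<\infty$ such that $f(s,t)=\sum_{n\ge1}\lambda_n e_n(s)e_n(t)$ in $L^2([0,1]^2)$ and $\operatorname{Tr}(f)=\sum_{n\ge1}\lambda_n$. Setting $\eta_n:=\int_0^1 e_n(s)\,dW_s$, a sequence of i.i.d.\ standard Gaussian variables under $P$, Itô's formula yields $2\int_0^1\int_0^t e_n(s)e_n(t)\,dW_s\,dW_t=\eta_n^2-1$, and hence, with convergence in $L^2(\Omega)$ and a.s.,
\begin{equation*}
\exp\!\Big(2\int_0^1\!\!\int_0^t f(s,t)\,dW_s\,dW_t\Big)=\prod_{n\ge1}e^{-\lambda_n}\,e^{\lambda_n\eta_n^2}.
\end{equation*}
Because $\prod_{n\ge1}e^{-\lambda_n}=e^{-\operatorname{Tr}(f)}$, it suffices to prove that $\mathbb{E}\big[\prod_{n\ge1}e^{\lambda_n\eta_n^2}\,\big|\,\Vert B^H\Vert<\varepsilon\big]\to1$ as $\varepsilon\to0$; throughout I take $\lambda_n<\tfrac12$ for every $n$, which is what makes the left-hand side integrable.

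First I would treat the finite-rank truncation $\prod_{n=1}^N e^{\lambda_n\eta_n^2}$. Expanding each factor by the Gaussian identity $e^{\lambda\xi^2}=\sqrt{a/\pi}\int_{\mathbb R}e^{-au^2}e^{2\sqrt{\lambda a}\,u\xi}\,du$ when $\lambda>0$ (and the bounded oscillatory identity $e^{-|\lambda|\xi^2}=(2\pi)^{-1/2}\int_{\mathbb R}e^{-y^2/2}e^{iy\sqrt{2|\lambda|}\,\xi}\,dy$ when $\lambda<0$), jointly in $\eta_1,\dots,\eta_N$, turns the product into a Gaussian average over $\mathbb R^N$ of terms $\exp\!\big(\int_0^1 h_{\mathbf u}\,dW\big)$ with $h_{\mathbf u}=\sum_{n\le N}c_n(u_n)e_n\in L^2([0,1])$; by Theorem~\ref{thm2}, each such conditional expectation converges to $1$. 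To exchange $\varepsilon\to0$ with the $\mathbb R^N$–integral I need a bound on these conditional exponential moments that is uniform in $\varepsilon$, and I would obtain it from the mechanism underlying Theorem~\ref{thm2}: after first approximating $f$ in nuclear norm by a smooth symmetric kernel (so that the eigenfunctions $e_n$ are smooth and vanish at $0$), the fractional integration-by-parts identity \eqref{fffubini} applied to the representations \eqref{g1/2}--\eqref{l1/2} of $(K^H)^{-1}$, together with $B^H_t=\int_0^t K^H(t,s)\,dW_s$, gives $\int_0^1 g\,dW=\int_0^1 \psi_g(s)\,B^H_s\,ds$ for an explicit $\psi_g\in L^1([0,1])$, so that $\big|\int_0^1 g\,dW\big|\le\Vert\psi_g\Vert_{L^1}\Vert B^H\Vert_\infty\le\Vert\psi_g\Vert_{L^1}\,\varepsilon$ on $\{\Vert B^H\Vert<\varepsilon\}$ for both admissible norms. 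Dominated convergence then delivers $\mathbb{E}\big[\prod_{n=1}^N e^{\lambda_n\eta_n^2}\,\big|\,\Vert B^H\Vert<\varepsilon\big]\to1$ for each fixed $N$.

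Finally I would let $N\to\infty$. Write $f=f_N+g_N$ with $f_N=\sum_{n\le N}\lambda_n e_n\otimes e_n$ and $g_N$ its tail; then $I_2(g_N):=2\int_0^1\int_0^t g_N(s,t)\,dW_s\,dW_t=\sum_{n>N}\lambda_n(\eta_n^2-1)$ obeys $\Vert I_2(g_N)\Vert_{L^2(\Omega)}^2=2\sum_{n>N}\lambda_n^2\to0$ and $\Vert K(g_N)\Vert_{\mathrm{op}}\to0$. Splitting $\exp\!\big(2\int_0^1\int_0^t f(s,t)\,dW_s\,dW_t\big)=e^{I_2(f_N)}\big(1+(e^{I_2(g_N)}-1)\big)$ and using the finite-rank step for the first factor, the whole statement reduces to showing that $\mathbb{E}\big[e^{I_2(f_N)}(e^{I_2(g_N)}-1)\,\big|\,\Vert B^H\Vert<\varepsilon\big]\to0$ as $N\to\infty$, uniformly in $\varepsilon\in(0,1)$. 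This uniform tail bound is the step I expect to be the main obstacle: it demands that conditional exponential moments of the second-chaos variable $I_2(g_N)$ be controlled by their (small) unconditional values uniformly in $\varepsilon$ — in effect a strengthening of Theorem~\ref{thm2} from pointwise convergence in $h$ to convergence that is uniform on bounded sets of $h$ — combined with the summability $\sum_n|\lambda_n|<\infty$ to render the tail contribution negligible. Granting it, one lets $\varepsilon\to0$ and then $N\to\infty$ and concludes $\lim_{\varepsilon\to0}\mathbb{E}\big[\exp\!\big(2\int_0^1\int_0^t f(s,t)\,dW_s\,dW_t\big)\,\big|\,\Vert B^H\Vert<\varepsilon\big]=\prod_{n\ge1}e^{-\lambda_n}=e^{-\operatorname{Tr}(f)}$.
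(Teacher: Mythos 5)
First, a point of reference: the paper does not prove this statement itself — it is quoted verbatim from Moret and Nualart \cite[Theorem 8]{Nualart} — so there is no internal proof to compare against. Your overall strategy (diagonalise the symmetric nuclear operator, write $2\int_0^1\int_0^t f\,dW_s\,dW_t=\sum_n\lambda_n(\eta_n^2-1)$, linearise each factor $e^{\lambda_n\eta_n^2}$ through an auxiliary Gaussian so that Theorem \ref{thm2} becomes applicable, then pass to the limit in the rank $N$) is the standard route for results of this type. The finite-rank portion is sound modulo two loose ends: for $\lambda_n<0$ the linearisation produces purely imaginary exponents, so you need a complex (characteristic-function) version of Theorem \ref{thm2} that is plausible but not stated; and the preliminary smoothing of $f$ in nuclear norm creates an error term whose \emph{conditional} expectation must be controlled uniformly in $\varepsilon$ — which is the very same difficulty as your tail term, not a separate routine step.

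The genuine gap is precisely the step you flag and then grant yourself: the uniform-in-$\varepsilon$ bound on $\mathbb{E}\bigl[e^{I_2(f_N)}\bigl(e^{I_2(g_N)}-1\bigr)\mid \Vert B^H\Vert<\varepsilon\bigr]$, where $I_2(g)=2\int_0^1\int_0^t g(s,t)\,dW_s\,dW_t$. This cannot be waved through. Since $P(\Vert B^H\Vert<\varepsilon)$ decays like $\exp(-C\varepsilon^{-1/H})$ by \eqref{sup} and \eqref{sbb holder}, the unconditional smallness $\Vert I_2(g_N)\Vert_{L^2(\Omega)}^2=2\sum_{n>N}\lambda_n^2\to0$ gives no control at all after conditioning: dividing by $P(\Vert B^H\Vert<\varepsilon)$ obliterates any polynomial gain in $N$. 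The only mechanism of this kind available in the paper — the exponential martingale inequality of Theorem \ref{exponential ineq} combined with the small-ball estimates, as used for the $C_3$ remainders in Theorems \ref{sincase} and \ref{recase} — works there only because the integrand admits a pathwise bound of order $\varepsilon^2$ \emph{on the conditioning event}, so that the quadratic variation is of order $\varepsilon^4$ and beats the $\exp(K_2\varepsilon^{-1/(H-\beta)})$ blow-up; that is exactly where the restriction $\beta<H-\frac{1}{4}$ comes from. Your tail kernel $g_N$ has no such $\varepsilon$-smallness on the event — its smallness is only in $N$ — so that mechanism does not apply, and a genuinely different tool (a conditional correlation or hypercontractivity inequality for second-chaos variables, which is what the cited source actually supplies) is required. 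Until that estimate is provided, the interchange of the limits $\varepsilon\to0$ and $N\to\infty$ is unjustified and the proof is incomplete. Separately, the hypothesis $\lambda_n<\frac{1}{2}$ that you impose for integrability is not part of the statement and should either be derived from the conditioning or acknowledged as an added assumption.
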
   

  The following theorems describe the small ball behavior of fractional Brownian motion under the supremum and Hölder norms. Their proofs can be found in \cite[Theorem 4, Theorem 5]{Nualart}.
     \begin{theorem}
   	Let $B^H$ be a fractional Brownian motion. Then there exists a positive constant $C_H$ such that
   	\begin{equation}\label{sup}
   		\lim_{\varepsilon\to 0} \varepsilon^\frac{1}{H}\ln P(\sup_{0\leq t\leq 1}|B^H|\leq \varepsilon )=- C_H.
   	\end{equation}
   \end{theorem}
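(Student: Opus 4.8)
This is the classical small-ball estimate for fractional Brownian motion under the supremum norm, and I would derive it from three ingredients: the self-similarity of $B^H$, the metric-entropy description of Gaussian small balls, and a Fekete-type subadditivity argument. First, the scaling identity $\{B^H_{cu}\}_{u\ge 0}\stackrel{d}{=}\{c^{H}B^H_u\}_{u\ge 0}$ gives, for every $T>0$,
\[
P\Big(\sup_{0\le t\le T}|B^H_t|\le 1\Big)=P\Big(\sup_{0\le t\le 1}|B^H_t|\le T^{-H}\Big),
\]
so, writing $\Phi(T):=-\ln P\big(\sup_{[0,T]}|B^H|\le 1\big)$ and substituting $\varepsilon=T^{-H}$, the assertion \eqref{sup} is \emph{equivalent} to the statement that $\Phi(T)/T\to C_H$ as $T\to\infty$. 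It therefore suffices to prove (a) the two-sided bound $c_1 T\le\Phi(T)\le c_2 T$ for some constants $0<c_1\le c_2<\infty$, and (b) that $\Phi$ is, up to lower-order terms, subadditive, so that $\lim_{T\to\infty}\Phi(T)/T$ exists by Fekete's lemma and lies in $[c_1,c_2]\subset(0,\infty)$.

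For (a) I would invoke the Kuelbs--Li characterization of Gaussian small balls: $\psi(\varepsilon):=-\ln P(\sup_{[0,1]}|B^H|\le\varepsilon)$ is tied, through the Kuelbs--Li/Li--Linde duality, to the $\|\cdot\|_\infty$-metric entropy of the unit Cameron--Martin ball $\mathcal K_H$ of $B^H$. Since $K^H$ (cf.\ Section 2.2) acts on $L^2([0,1])$ essentially as fractional integration of order $H+\tfrac12$, its singular numbers decay like $n^{-(H+1/2)}$, so $\mathcal K_H$ is the image of an $L^2$-ball under such an operator and $\ln N(\mathcal K_H,\delta,\|\cdot\|_\infty)\asymp\delta^{-2/(2H+1)}$; the duality then gives $\psi(\varepsilon)\asymp\varepsilon^{-1/H}$, i.e.\ $\Phi(T)\asymp T$. (The order can also be obtained by hand: the upper bound $P(\sup_{[0,1]}|B^H|\le\varepsilon)\le e^{-c\varepsilon^{-1/H}}$ by projecting onto the stationary Gaussian increment sequence $\{B^H_{k\ell}-B^H_{(k-1)\ell}\}$ with mesh $\ell\asymp\varepsilon^{1/H}$, each increment of which $\sup|B^H|\le\varepsilon$ confines to an interval of length $\asymp\varepsilon$; the matching lower bound from a wavelet-type series representation of $B^H$ with independent Gaussian coefficients.)

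For (b), fix $\delta\in(0,1)$. For $S,T>0$ the event $\{\sup_{[0,S+T]}|B^H|\le 1\}$ is the intersection of the symmetric convex sets $\{\sup_{[0,S]}|B^H|\le 1\}$ and $\{\sup_{[S,S+T]}|B^H|\le 1\}$, so the Gaussian correlation inequality gives
\[
P\Big(\sup_{[0,S+T]}|B^H|\le 1\Big)\ge P\Big(\sup_{[0,S]}|B^H|\le 1\Big)\,P\Big(\sup_{[S,S+T]}|B^H|\le 1\Big).
\]
Since $\{B^H_{S+u}-B^H_S\}_{u\in[0,T]}$ is again a fractional Brownian motion, the inclusion
\[
\{\sup_{[S,S+T]}|B^H|\le 1\}\supseteq\{|B^H_S|\le\delta\}\cap\{\sup_{u\in[0,T]}|B^H_{S+u}-B^H_S|\le 1-\delta\}
\]
together with a second use of the correlation inequality yields $P(\sup_{[S,S+T]}|B^H|\le 1)\ge P(|B^H_S|\le\delta)\,P(\sup_{[0,T]}|B^H|\le 1-\delta)$, whence, after rescaling the last factor,
\[
\Phi(S+T)\le\Phi(S)+\Phi\big((1-\delta)^{-1/H}T\big)+O\big(\ln(2+S)\big).
\]
Since $(1-\delta)^{-1/H}\downarrow 1$ as $\delta\downarrow 0$, this is an \emph{approximate} subadditivity with dilation factors tending to $1$; combined with (a), a Fekete-type argument — carried out for fixed $\delta$ and then optimised as $\delta\to0$ — shows that $\lim_{T\to\infty}\Phi(T)/T$ exists in $(0,\infty)$, and calling this limit $C_H$ proves \eqref{sup}.

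The main obstacle is the long-range dependence of $B^H$ when $H\neq\tfrac12$: the increments over disjoint intervals are correlated, so $\Phi$ is not exactly subadditive and the interval cannot be cut into independent blocks. The device that circumvents this — peeling off the endpoint value $B^H_S$ before applying self-similarity, at the cost of the $O(\ln(2+S))$ term and the dilation $T\mapsto(1-\delta)^{-1/H}T$, and then removing the dilation by sending $\delta\to0$ — is precisely where the argument is delicate, and the Gaussian correlation inequality is the tool that makes the required one-sided comparisons available with no extra hypotheses. On the entropy side, the analogous subtlety is pinning the exponent to exactly $1/H$ (no logarithmic correction), which is what the Kuelbs--Li duality, or a Cameron--Martin-corrected block decomposition, delivers.
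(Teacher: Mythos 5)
This statement is not proved in the paper at all: it is quoted as a known preliminary (Theorem~4 of Moret--Nualart, which in turn rests on the small-ball theory of Li--Shao), so there is no in-paper argument to compare against. Your overall strategy --- self-similarity to reduce \eqref{sup} to the existence of $\lim_{T\to\infty}\Phi(T)/T$ with $\Phi(T)=-\ln P(\sup_{[0,T]}|B^H|\le 1)$, two-sided bounds $\Phi(T)\asymp T$ via entropy/duality, and a correlation-inequality-based subadditivity --- is exactly the strategy used in that literature, and the individual ingredients you invoke (the scaling identity, the exponent $1/H$ from the entropy of $K^H(L^2)$-balls, the inclusion $\{\sup_{[S,S+T]}|B^H|\le1\}\supseteq\{|B^H_S|\le\delta\}\cap\{\sup_u|B^H_{S+u}-B^H_S|\le1-\delta\}$, Royen/\v{S}id\'ak) are all correctly deployed.

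The gap is in the step you yourself flag as delicate: passing from your approximate subadditivity
\[
\Phi(S+T)\le\Phi(S)+\Phi\big((1-\delta)^{-1/H}T\big)+O(\ln(2+S))
\]
to the existence of $\lim\Phi(T)/T$ ``by a Fekete-type argument.'' As written this does not close. If you iterate with $n$ blocks of a fixed length $S$, the additive errors are $O(\ln(\text{splitting point}))$ and the splitting points range up to $nS$, so they accumulate to $O(n\ln(nS))$; dividing by the total length $nS$ leaves $O(\ln(nS)/S)$, which tends to $\infty$ as $n\to\infty$ for fixed $S$. Letting the block length grow with $n$ forces you to evaluate $\Phi(\rho S)/(\rho S)$ near its $\liminf$ at prescribed, growing scales $S$ satisfying $\ln(nS)\ll S\ll nS$, but the $\liminf$ is only guaranteed along \emph{some} sequence, which may be too sparse to hit the required window. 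The de Bruijn--Erd\H{o}s almost-subadditive lemma would absorb a $\ln$-size error (since $\int\ln t\,/t^2\,dt<\infty$), but it requires the clean form $f(S+T)\le f(S)+f(T)+g(S+T)$, and your inequality carries the dilation $(1-\delta)^{-1/H}>1$ on one argument; removing the dilation by sending $\delta\to0$ blows up the constant hidden in $O(\ln(2+S))$ (it contains $\ln(1/\delta)$). So there is a genuine tension between the two error sources that your sketch does not resolve. The published proofs handle this by a more careful use of a weak correlation inequality of the form $\mu(A\cap B)\ge\mu(\lambda A)\,\mu\big(\sqrt{1-\lambda^2}\,B\big)$, arranged so that no one-dimensional factor $P(|B^H_S|\le\delta)\asymp S^{-H}$ (hence no $\ln S$ term tied to the splitting location) ever appears; to make your write-up complete you would either need to adopt that device or exhibit an explicit iteration scheme in which the dilation and the logarithmic errors are simultaneously negligible.
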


   \begin{theorem}
   	Let $B^H$ be a fractional Brownian motion and $0\leq \beta <H.$ Then there exist constants $0<K_1\leq K_2<\infty$ depending on $H$ and $\beta$ such that for all $0<\varepsilon<1$
   	\begin{equation}
   		-K_2\varepsilon^{-\frac{1}{H-\beta}}\leq 
   		\ln P\left(\sup_{0\leq s<t\leq 1}\frac{|B^H_t-B^H_s|}{|t-s|^\beta} \leq \varepsilon\right)\leq -K_1\varepsilon^{-\frac{1}{H-\beta}}.\label{sbb holder}
   	\end{equation}
   \end{theorem}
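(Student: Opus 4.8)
\emph{Proof strategy.} The plan is to sandwich $P(\|B^H\|_\beta\le\varepsilon)$ between the two asserted bounds using only the supremum small-ball estimate \eqref{sup} and two structural properties of $B^H$: $H$-self-similarity together with stationarity of increments, and the local nondeterminism of fractional Brownian motion, that is, the existence of a constant $c_H>0$ such that $\mathrm{Var}(B^H_{t_j}-B^H_{t_{j-1}}\mid B^H_{t_1},\dots,B^H_{t_{j-1}})\ge c_H(t_j-t_{j-1})^{2H}$ for every $0<t_1<\cdots<t_j\le1$. Throughout it is enough to argue for small $\varepsilon$, since on any interval $[\delta_0,1]$ with $\delta_0>0$ both $\ln P(\|B^H\|_\beta\le\varepsilon)$ and $\varepsilon^{-1/(H-\beta)}$ are bounded and nonzero, so $K_1$ and $K_2$ can afterwards be adjusted to cover $0<\varepsilon<1$.

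\emph{Upper bound.} Note first that merely using $\|B^H\|_\infty\le\|B^H\|_\beta$ together with \eqref{sup} gives only $\ln P\lesssim-\varepsilon^{-1/H}$, which is too weak because $1/H<1/(H-\beta)$; the Hölder ball is genuinely smaller. Instead I would take $n=\lceil\varepsilon^{-1/(H-\beta)}\rceil$, $t_i=i/n$, and observe that $\|B^H\|_\beta\le\varepsilon$ forces $|X_i|\le\varepsilon n^{-\beta}=:a$ for every increment $X_i:=B^H_{t_i}-B^H_{t_{i-1}}$, hence $P(\|B^H\|_\beta\le\varepsilon)\le P(\bigcap_{i=1}^n\{|X_i|\le a\})$. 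Expanding the latter by conditioning successively on $X_1,\dots,X_{i-1}$, the conditional law of $X_i$ is Gaussian with past-measurable mean and, by local nondeterminism, conditional variance $\sigma_i^2\ge c_H n^{-2H}$; since $P(|N(\mu,\sigma^2)|\le a)\le 2\Phi(a/\sigma)-1$ and the right-hand side decreases in $\sigma$, each conditional factor is at most $\rho:=2\Phi(a\,n^{H}/\sqrt{c_H})-1\in(0,1)$, a fixed constant because $a\,n^{H}=\varepsilon n^{H-\beta}\asymp1$ for this choice of $n$. Peeling the factors off one at a time then gives $P(\|B^H\|_\beta\le\varepsilon)\le\rho^{\,n-1}\le\exp(-K_1\varepsilon^{-1/(H-\beta)})$.

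\emph{Lower bound.} Here I would build a dyadic tube. For $j\ge0$ and $0\le k<2^j$ set $I_{j,k}=[k2^{-j},(k+1)2^{-j}]$ and $A_{j,k}=\{\sup_{t\in I_{j,k}}|B^H_t-B^H_{k2^{-j}}|\le c\,\varepsilon\,2^{-j\beta}\}$, where $c$ is a small absolute constant. A telescoping estimate shows $\bigcap_{j,k}A_{j,k}\subseteq\{\|B^H\|_\beta\le\varepsilon\}$ once $c$ is small enough: given $s<t$, pick $j$ with $2^{-j-1}<t-s\le2^{-j}$; then $[s,t]$ meets at most two level-$j$ dyadic intervals, and combining the corresponding increment bounds with $a^\beta+b^\beta\le2(a+b)^\beta$ yields $|B^H_t-B^H_s|\le\varepsilon(t-s)^\beta$. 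Each $A_{j,k}$ is a symmetric convex subset of $C([0,1])$, so the Gaussian correlation inequality gives $P(\bigcap_{j,k}A_{j,k})\ge\prod_{j,k}P(A_{j,k})$, and by stationarity and self-similarity $P(A_{j,k})=P(\sup_{[0,1]}|B^H|\le c\,\varepsilon\,2^{j(H-\beta)})$, independent of $k$. Below the critical level (where $\varepsilon\,2^{j^*(H-\beta)}\asymp1$, so that $2^{j^*}\asymp\varepsilon^{-1/(H-\beta)}$) I would invoke \eqref{sup} to bound $-\ln P(A_{j,k})\lesssim\varepsilon^{-1/H}2^{-j(H-\beta)/H}$, and summing the $2^j$ copies over $j\le j^*$ gives $\lesssim\varepsilon^{-1/H}2^{j^*\beta/H}\asymp\varepsilon^{-1/(H-\beta)}$. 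Above the critical level the radius $c\,\varepsilon\,2^{j(H-\beta)}$ is large, so $-\ln P(A_{j,k})$ is controlled by the Gaussian tail $P(\sup_{[0,1]}|B^H|>R)\lesssim e^{-(R-m)^2/2}$, which decays doubly exponentially in $j-j^*$ and defeats the factor $2^j$; that part of the sum is again $\lesssim\varepsilon^{-1/(H-\beta)}$. Adding the logarithms gives $P(\|B^H\|_\beta\le\varepsilon)\ge\exp(-K_2\varepsilon^{-1/(H-\beta)})$.

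\emph{Main obstacle.} The delicate point is the upper bound: the increments $X_i$ are correlated---long-range dependent when $H>\frac{1}{2}$---so one cannot simply multiply marginal probabilities, and the available Gaussian correlation inequalities run the wrong way, lower-bounding rather than upper-bounding $P(\bigcap_i\{|X_i|\le a\})$. Local nondeterminism is exactly the quantitative input that prevents the successive conditional variances from collapsing, which is what legitimizes the telescoping product bound. As a consistency check, the exponent $1/(H-\beta)$ also emerges from the Kuelbs--Li / Li--Linde duality between Gaussian small-ball probabilities and metric entropy: the $\beta$-Hölder $\varepsilon$-entropy of the unit ball of the Cameron--Martin space $I_{0^+}^{H+1/2}(L^2)$ is of order $\varepsilon^{-1/(H+1/2-\beta)}$, which the duality converts into the small-ball rate $\varepsilon^{-1/(H-\beta)}$; the direct route above is, however, more self-contained.
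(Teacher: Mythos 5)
The paper itself does not prove this statement: it is imported verbatim from Moret and Nualart, where the H\"older small-ball estimate is established by different means (via the theory of small deviations under nonuniform norms, in the spirit of Baldi--Roynette and Stolz, using the Ciesielski-type isomorphism between H\"older spaces and sequence spaces). Your argument is therefore a genuinely different, self-contained route, and I find it essentially correct. For the upper bound, the combination of strong local nondeterminism with successive conditioning on the grid increments is exactly the right tool: it is what lets you beat the na\"ive $\varepsilon^{-1/H}$ rate coming from $\|B^H\|_\infty\le\|B^H\|_\beta$, and your choice $n\asymp\varepsilon^{-1/(H-\beta)}$ makes the per-step conditional probability $\rho=2\Phi(a n^H/\sqrt{c_H})-1$ a genuine constant in $(0,1)$ since $1\le\varepsilon n^{H-\beta}\le 2$ for small $\varepsilon$. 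For the lower bound, the dyadic tube plus the Gaussian correlation inequality (each $A_{j,k}$ being symmetric and convex) is sound, and your bookkeeping of the sub- and super-critical levels does produce the rate $\varepsilon^{-1/(H-\beta)}$. Three points deserve explicit mention. First, the correlation inequality is invoked for countably many sets; this needs a monotone limit over finite subfamilies, which is harmless precisely because you verify $\sum_{j,k}(-\ln P(A_{j,k}))<\infty$. Second, for $\beta=0$ the geometric sum $\sum_{j\le j^*}2^{j\beta/H}$ degenerates into $j^*\asymp\log(1/\varepsilon)$ equal terms and your lower bound acquires a spurious logarithm; since the theorem allows $\beta=0$, that case must be split off, but it is immediate from \eqref{sup} because $\sup_t|B^H_t|\le\sup_{s<t}|B^H_t-B^H_s|\le 2\sup_t|B^H_t|$. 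Third, \eqref{sup} is only a limit statement, so it yields two-sided bounds only for radii below some $\varepsilon_0(H)$; you correctly dispose of the range $[\varepsilon_0,1)$ at the outset by adjusting constants, and the same remark is needed when you apply \eqref{sup} to the radii $c\,\varepsilon\,2^{j(H-\beta)}$, which stay bounded for $j\le j^*$. With these small repairs the proof is complete, and it has the merit of being entirely elementary relative to the citation the paper relies on.
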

   
   The following theorem allows us to decompose the Onsage-Machlup functional into multiple terms during the calculation (see \cite[Lemma 8]{Nualart}).
   \begin{theorem}
   	For a fixed $n\geq 1$, let $z_1,\dots,z_n$ be random variables defined on $(\Omega, \mathcal{F}, P)$ and $\{A_\varepsilon; \varepsilon>0\}$ a family of sets in $\mathcal{F}$. Suppose that for any $c\in \mathbb{R}$ and any $i=1,\dots,n,$ we have 
   	\begin{equation*}
   		\limsup_{\varepsilon\to 0} \mathbb{E}(\exp(cz_i)\mid A_\varepsilon)\leq1.
   	\end{equation*}
   	Then 
   	\begin{equation*}
   		\lim_{\varepsilon\to 0}\mathbb{E} \left(\exp\left(\sum_{i=1}^nz_i\right)\mid A_\varepsilon\right)=1.
   	\end{equation*}\label{lem8}
   \end{theorem}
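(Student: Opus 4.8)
The plan is to establish the two inequalities
\[
\limsup_{\varepsilon\to 0}\mathbb{E}\Bigl(\exp\Bigl(\textstyle\sum_{i=1}^n z_i\Bigr)\,\Big|\,A_\varepsilon\Bigr)\leq 1
\qquad\text{and}\qquad
\liminf_{\varepsilon\to 0}\mathbb{E}\Bigl(\exp\Bigl(\textstyle\sum_{i=1}^n z_i\Bigr)\,\Big|\,A_\varepsilon\Bigr)\geq 1,
\]
each of which follows from the hypothesis by an application of Hölder's inequality for the conditional measure $P(\,\cdot\mid A_\varepsilon)$; we then conclude that the limit exists and equals $1$. Throughout, write $S=\sum_{i=1}^n z_i$. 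Note first that, taking $c=\pm n$ in the hypothesis, we have $\limsup_{\varepsilon\to 0}\mathbb{E}(\exp(\pm n z_i)\mid A_\varepsilon)\leq 1<\infty$, so for all sufficiently small $\varepsilon$ the random variables $\exp(\pm n z_i)$ are $P(\,\cdot\mid A_\varepsilon)$-integrable; in particular all conditional expectations below are finite and the manipulations are legitimate.

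For the upper bound, decompose $\exp(S)=\prod_{i=1}^n\exp(z_i)$ and apply Hölder's inequality with the $n$ conjugate exponents all equal to $n$:
\[
\mathbb{E}\bigl(\exp(S)\mid A_\varepsilon\bigr)\;\leq\;\prod_{i=1}^n \mathbb{E}\bigl(\exp(n z_i)\mid A_\varepsilon\bigr)^{1/n}.
\]
Since each factor on the right is nonnegative, passing to the $\limsup$ and using $\limsup(ab)\leq(\limsup a)(\limsup b)$ for nonnegative quantities, together with the hypothesis for $c=n$, yields $\limsup_{\varepsilon\to 0}\mathbb{E}(\exp(S)\mid A_\varepsilon)\leq \prod_{i=1}^n 1^{1/n}=1$.

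For the lower bound, the same Hölder argument applied to $\exp(-S)=\prod_{i=1}^n\exp(-z_i)$ and the hypothesis with $c=-n$ give $\limsup_{\varepsilon\to 0}\mathbb{E}(\exp(-S)\mid A_\varepsilon)\leq 1$. On the other hand, by the Cauchy–Schwarz inequality for $P(\,\cdot\mid A_\varepsilon)$,
\[
1=\mathbb{E}\bigl(\exp(S/2)\exp(-S/2)\mid A_\varepsilon\bigr)^2\leq \mathbb{E}\bigl(\exp(S)\mid A_\varepsilon\bigr)\,\mathbb{E}\bigl(\exp(-S)\mid A_\varepsilon\bigr),
\]
so $\mathbb{E}(\exp(S)\mid A_\varepsilon)\geq 1/\mathbb{E}(\exp(-S)\mid A_\varepsilon)$; taking $\liminf$ and using the bound just proved gives $\liminf_{\varepsilon\to 0}\mathbb{E}(\exp(S)\mid A_\varepsilon)\geq 1/\limsup_{\varepsilon\to 0}\mathbb{E}(\exp(-S)\mid A_\varepsilon)\geq 1$. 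Combining the two inequalities proves the claim. There is no real obstacle here: the only points requiring care are the passage between $\limsup$ and $\liminf$ through the reciprocal and the a priori finiteness of the conditional expectations, both of which are handled by invoking the hypothesis with the specific constants $c=\pm n$.
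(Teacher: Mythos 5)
Your proof is correct, and it is the standard argument for this lemma (upper bound by the generalized H\"older inequality with exponents $p_i=n$, lower bound via Cauchy--Schwarz applied to $e^{S/2}e^{-S/2}$ together with the same bound for $-S$); the paper does not prove the statement itself but cites it from Moret--Nualart, where essentially this same argument is used.
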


     \subsection{Distribution dependent SDEs}
   For $\theta\in [1,\infty),$ let $\mathscr{P}_\theta(\mathbb{R})$ be the space of probability measures on $\mathbb{R}$ with finite $\theta$-th moment. We define the $L^\theta$-Wasserstein distance on $\mathscr{P}_\theta(\mathbb{R})$ by
   \begin{equation}
   	d_\theta(\mu,\nu)=\inf_{\pi\in \Pi(\mu,\nu)} \left(\int_{\mathbb{R}\times\mathbb{R}} |x-y|^\theta \pi(dx,dy) \right)^\frac{1}{\theta},\quad \mu,\nu\in \mathscr{P}_\theta(\mathbb{R}),
   \end{equation}
   where $\Pi(\mu,\nu)$ stands for the set of all possible couplings of $(\mu,\nu).$    
   For a random variable $X$, we use $\mathcal{L}_{X}$ to denote the distribution of $X$ on $\mathbb{R}$.
 \begin{remark}
    \begin{itemize}    
        \item[(1)] For any random variables \( X, Y \in L^\theta(\Omega) \), the following inequality holds:
        \[
        d_\theta(L_X, L_Y) \leq \left( \mathbb{E} \mid X - Y \mid^\theta \right)^{\frac{1}{\theta}}.
        \]
        
        \item[(2)] If \( \phi_t \) is a deterministic path, then the law of \( \phi_t \) is a Dirac measure at \( \phi_t \), i.e., 
        \[
        L_{\phi_t} = \delta_{\phi_t}.
        \]
    \end{itemize}
\end{remark}
			
	In this paper, we consider the distribution dependent SDE \eqref{fir}, where $b:\mathbb{R}\times  \mathscr{P}_2(\mathbb{R})\to  \mathbb{R}.$

	A function $f(x, \mu): \mathbb{R} \times \mathscr{P}_2(\mathbb{R}) \to \mathbb{R}$ is said to be Lipschitz continuous with constant $L$ if it satisfies  
\begin{equation*}  
    |f(x, \mu) - f(y, \nu)| \leq L \big( |x - y| + d_2(\mu, \nu) \big),  
    \quad \forall x, y \in \mathbb{R}, \, \mu, \nu \in \mathscr{P}_2(\mathbb{R}).  
\end{equation*}  
	
	Next, we impose some assumptions on $b$.
	\begin{itemize}
    \item (\textbf{Hyp 1}) 
    \begin{enumerate}
    	\item The derivatives of \( b \) with respect to the spatial variable  \( b_x \)  exists and continuous;
    	\item  \( b \) and \( b_x \) are  bounded;
		 \item $b$ is Lipschitz continuous with constant $L_1$;
		 \item There exists a constant $k$ such  
		that 
		\begin{equation*}
			|b(x+h,\mu)-b(x,\mu)-b_x(x,\mu)h|\leq k h^2,\quad \forall x,h\in \mathbb{R}, \mu \in \mathscr{P}_2(\mathbb{R}).
		\end{equation*}
		
                   \end{enumerate}
\end{itemize}
 
 \begin{itemize}
    \item (\textbf{Hyp 2}) 
    \begin{enumerate}
    	\item The derivatives of \( b \) with respect to the spatial variable  \( b_x \) and \( b_{xx} \) exist;
    	\item  \( b \), \( b_x \), and \( b_{xx} \) are all bounded;
		\item $b,b_x$ and $b_{xx}$ are Lipschitz continuous with constants 
        $L_1$, $L_2$ and $L_3$.
        \end{enumerate}
        \end{itemize}

\begin{remark}
	From \cite{Fan2022}, it is known that under the conditions of assumptions \textnormal{(\textbf{Hyp 1})} and \textnormal{(\textbf{Hyp 2})}, the existence and uniqueness of the strong solution to equation \eqref{fir} hold, and that $\mathcal{L}_{X_t} \in \mathscr{P}_2(\mathbb{R})$, for $t \in [0,1].$
	\end{remark}

\subsection{Technical theorems}
	In this section, we will introduce several commonly utilized technical lemmas and theorems.
	
   We first introduce the fractional Girsanov transformation. Let \( B^H \) be a fractional Brownian motion with Hurst parameter \( 0 < H < 1 \) and denote by \( \{\mathcal{F}^{B^H}_t,\ t \in [0,T] \} \) its natural filtration. Given an adapted process with integrable trajectories \( u = \{u_t,\ t \in [0,T] \} \), consider the transformation 
\begin{equation}
    \tilde{B}^H_t = B^H_t + \int^t_0 u_s \, ds. \label{gt}
\end{equation}
We can write 
\begin{align}
    \tilde{B}^H_t 
    &= B^H_t + \int^t_0 u_s \, ds \notag \\
    &= \int_0^1 K^H(t,s) \, dW_s + \int_0^t u_s \, ds \notag \\
    &= \int_0^1 K^H(t,s) \, d\tilde{W}_s,\end{align}
where
\begin{equation}
    \tilde{W}_t = W_t + \int_0^t (K^H)^{-1} \left( \int_0^\cdot u_s \, ds \right)(r) \, dr.
\end{equation}
The following theorem is taken from \cite[Theorem 2]{NUALART2002103}.

\begin{theorem}[Girsanov transform for fractional Brownian motion]\label{Girsanov}
    Consider the shifted process \eqref{gt} defined by a process \( u = \{u_t,\ t \in [0,T] \} \) with integrable trajectories. Assume that:
    \begin{enumerate}[label=\roman*)]
        \item \( \int_0^\cdot u_s \, ds \in I_{0^+}^{H+\frac{1}{2}}(L^2([0,T])) \), almost surely;
        \item \( \mathbb{E}(\xi_T) = 1 \), where 
        \begin{equation*}
            \xi_T = \exp\left( -\int_0^T (K^H)^{-1} \left( \int_0^\cdot u_s \, ds \right)(r) \, dW_r - \frac{1}{2} \int_0^T \left[ (K^H)^{-1} \left( \int_0^\cdot u_s \, ds \right)(r) \right]^2 \, dr \right). 
        \end{equation*}
    \end{enumerate}
    Then the shifted process \( \tilde{B}^H \) is an \( \mathcal{F}_t^{B^H} \)-fractional Brownian motion with Hurst parameter \( H \) under the new probability \( \tilde{P} \) defined by \( \frac{d\tilde{P}}{dP} = \xi_T \).
\end{theorem}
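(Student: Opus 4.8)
The plan is to reduce Theorem~\ref{Girsanov} to the classical Girsanov theorem for the driving Brownian motion $W$ by exploiting the kernel representation \eqref{kh int}.

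First I would set $g := (K^H)^{-1}\big(\int_0^\cdot u_s\,ds\big)$. Hypothesis i) says precisely that $\int_0^\cdot u_s\,ds$ lies almost surely in $I_{0^+}^{H+\frac12}(L^2([0,T]))$, which is the range of the operator $K^H$ defined in \eqref{kh operator}; since $K^H$ is a bijection from $L^2([0,T])$ onto this range with inverse given by \eqref{g1/2}--\eqref{l1/2}, the process $g$ is well defined, belongs to $L^2([0,T])$ almost surely, and satisfies $\int_0^t u_s\,ds = (K^H g)(t) = \int_0^t K^H(t,s) g(s)\,ds$ for all $t$. Because $t \mapsto \int_0^t u_s\,ds$ is adapted (the trajectories of $u$ being adapted and integrable) and $(K^H)^{-1}$ preserves adaptedness, $g$ is an adapted element of $L^2([0,T])$; hence $\int_0^\cdot g(r)\,dW_r$ is a well-defined continuous local martingale, and $\xi_T$ is the terminal value of the stochastic exponential of $-\int_0^\cdot g(r)\,dW_r$.

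Next I would put $\tilde W_t := W_t + \int_0^t g(r)\,dr$ and note, using that $K^H(t,\cdot)$ vanishes on $(t,1]$, that
\begin{equation*}
\int_0^1 K^H(t,s)\,d\tilde W_s = \int_0^1 K^H(t,s)\,dW_s + \int_0^t K^H(t,s) g(s)\,ds = B^H_t + \int_0^t u_s\,ds = \tilde B^H_t,
\end{equation*}
so $\tilde B^H$ bears the same kernel representation relative to $\tilde W$ that $B^H$ bears relative to $W$. Since $\mathbb{E}(\xi_T)=1$ by hypothesis ii), the stochastic exponential of $-\int_0^\cdot g\,dW$ is a genuine (uniformly integrable) martingale on $[0,T]$, so the classical Girsanov theorem applies and shows that $\tilde W$ is a standard Brownian motion under $\tilde P$ with $\frac{d\tilde P}{dP}=\xi_T$, with respect to the filtration generated by $W$ --- which coincides with $\{\mathcal F^{B^H}_t\}$ because $K^H$ and $(K^H)^{-1}$ are causal (Volterra) operators. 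Finally, for any standard Brownian motion $W'$ the process $t\mapsto \int_0^t K^H(t,s)\,dW'_s$ is centered Gaussian with covariance $\int_0^{t\wedge s} K^H(t,r) K^H(s,r)\,dr = \tfrac12\big(t^{2H}+s^{2H}-|t-s|^{2H}\big)$; taking $W' = \tilde W$ under $\tilde P$ identifies $\tilde B^H$ as an $\mathcal F^{B^H}_t$-fractional Brownian motion with Hurst parameter $H$ under $\tilde P$.

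I expect the main obstacle to be the operator-theoretic step: verifying that $K^H$ maps $L^2([0,T])$ bijectively onto $I_{0^+}^{H+\frac12}(L^2)$, that $(K^H)^{-1}$ --- which involves genuine fractional derivatives when $H<\tfrac12$, cf.\ \eqref{g1/2} --- returns an $L^2$ function, and that it preserves adaptedness. This is exactly where hypothesis i) enters and where the estimates are delicate in the rough regime $H<\tfrac12$. A secondary subtlety is that, since $g$ may be very irregular for small $H$, Novikov's criterion need not hold; this is why the martingale property of the exponential is imposed directly via $\mathbb{E}(\xi_T)=1$ rather than deduced.
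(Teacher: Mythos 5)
Your proposal is sound: the paper does not prove this theorem but imports it verbatim from \cite[Theorem~2]{NUALART2002103}, and your reduction to the classical Girsanov theorem for the underlying Wiener process $W$ via the Volterra representation $B^H_t=\int_0^t K^H(t,s)\,dW_s$, together with the bijectivity and causality of $K^H$ and the covariance identity $\int_0^{t\wedge s}K^H(t,r)K^H(s,r)\,dr=\tfrac{1}{2}\big(t^{2H}+s^{2H}-|t-s|^{2H}\big)$, is exactly the argument used there. The two delicate points you flag --- that $(K^H)^{-1}$ returns an adapted $L^2$ function (precisely the content of hypothesis i) combined with the adaptedness-preservation of $(K^H)^{-1}$ noted after \eqref{l1/2}) and that $\mathbb{E}(\xi_T)=1$ is imposed in place of Novikov's criterion --- are indeed the only places where care is needed.
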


   The following theorem introduces the conditions under which a stochastic integral and a Lebesgue integral can be interchanged (see \cite[Theorem 2.2]{Veraar01082012}).
   \begin{theorem}\label{fubini}
   Let \((X, \Sigma, \mu)\) be a \(\sigma\)-finite measure space. Let \(\varphi: X \times [0,T] \times \Omega \to \mathbb{R}\) be progressively measurable, and suppose that for almost every \(\omega \in \Omega\),
\[
\int_X \left( \int_0^T |\varphi(x,t,\omega)|^2 \, dt \right)^{1/2} \, d\mu(x) < \infty.
\]
   	Then for almost all $\omega\in \Omega$ and for all $t\in [0,T]$, we have
   	\begin{equation*}
   		\int_X \int_0^T \varphi(x,t,\omega) dW_t d\mu(x)=\int_0^T \int_X \varphi(x,t,\omega) d\mu(x) dW_t .
   	\end{equation*}
   	  \end{theorem}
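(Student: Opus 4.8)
The statement is the classical stochastic Fubini theorem, so the proof follows the standard three-step template: establish that both sides are well defined, verify the identity for elementary integrands (where it reduces to the ordinary Fubini theorem plus linearity of the Itô integral), and pass to the general case by density, preceded by a localization that converts the pathwise integrability hypothesis into an $L^2(\Omega)$ statement.

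First I would check that both sides make sense. Since the hypothesis gives $\int_0^T|\varphi(x,t,\omega)|^2\,dt<\infty$ for $\mu$-a.e.\ $x$ and a.e.\ $\omega$, the inner Itô integral $\psi(x,\omega):=\int_0^T\varphi(x,t,\omega)\,dW_t$ is defined; one fixes a jointly $(\Sigma\otimes\mathcal{F})$-measurable version of $(x,\omega)\mapsto\psi(x,\omega)$, which exists because the Itô integral is an isometry and elementary integrands are jointly measurable, so that $x\mapsto\psi(x,\omega)$ is measurable and the outer $\mu$-integral on the left is meaningful (its $\mu$-integrability a.s.\ will drop out of the estimates below). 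For the right-hand side, $\sigma$-finiteness of $\mu$ together with Fubini on the product $\sigma$-algebra shows that $\Phi(t,\omega):=\int_X\varphi(x,t,\omega)\,d\mu(x)$ is progressively measurable, while Minkowski's integral inequality gives
\[
\Big(\int_0^T|\Phi(t,\omega)|^2\,dt\Big)^{1/2}\le\int_X\Big(\int_0^T|\varphi(x,t,\omega)|^2\,dt\Big)^{1/2}\,d\mu(x)<\infty\quad\text{a.s.},
\]
so $\int_0^T\Phi(t,\omega)\,dW_t$ is well defined as well.

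Next I would localize. The process $A_t(\omega):=\int_X\big(\int_0^t|\varphi(x,s,\omega)|^2\,ds\big)^{1/2}\,d\mu(x)$ is adapted, nondecreasing and continuous in $t$, so $\tau_n:=\inf\{t:A_t\ge n\}\wedge T$ is a stopping time with $\tau_n=T$ for large $n$ on the full-measure event where the hypothesis holds. Replacing $\varphi$ by $\varphi^{(n)}:=\varphi\,\mathbf{1}_{\{t\le\tau_n\}}$ forces $\mathbb{E}\big(\int_X(\int_0^T|\varphi^{(n)}|^2\,dt)^{1/2}\,d\mu\big)\le n<\infty$, and since the two sides for $\varphi^{(n)}$ and for $\varphi$ agree on $\{\tau_n=T\}$ and these events exhaust $\Omega$ up to a null set, it suffices to prove the identity under the extra assumption $\mathbb{E}\big(\int_X(\int_0^T|\varphi|^2\,dt)^{1/2}\,d\mu\big)<\infty$ (one may additionally exhaust $X$ by sets of finite $\mu$-measure and pass to the limit by monotone/dominated convergence). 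Under that assumption I would approximate $\varphi$ in the mixed norm $\|\varphi\|:=\int_X\big(\mathbb{E}\int_0^T|\varphi(x,t)|^2\,dt\big)^{1/2}\,d\mu(x)$ by finite linear combinations of elementary integrands $f(x)\mathbf{1}_{(a,b]}(t)\,\xi(\omega)$ with $\mu(\{f\ne0\})<\infty$ and $\xi$ bounded and $\mathcal{F}_a$-measurable; for such integrands both sides of the claimed identity equal $\big(\int_X f\,d\mu\big)\,\xi\,(W_{b\wedge t}-W_{a\wedge t})$, so the identity holds there by linearity. To pass to the limit, the Itô isometry and Jensen's inequality give $\mathbb{E}\big|\int_X(\psi-\psi_n)\,d\mu\big|\le\int_X\big(\mathbb{E}\int_0^T|\varphi-\varphi_n|^2\,dt\big)^{1/2}\,d\mu=\|\varphi-\varphi_n\|$, while Minkowski's inequality applied twice gives $\big(\mathbb{E}\int_0^T|\int_X(\varphi-\varphi_n)\,d\mu|^2\,dt\big)^{1/2}\le\|\varphi-\varphi_n\|$; hence both sides converge in $L^2(\Omega)$ (and, via Doob's or the Burkholder--Davis--Gundy inequality, in $L^2(\Omega;C([0,T]))$, which promotes the a.e.-$t$ equality to all $t\in[0,T]$ after passing to continuous modifications). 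Since the two sides coincide for every $\varphi_n$, they coincide for $\varphi$.

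The main obstacle is the measurability bookkeeping rather than any analytic estimate: producing the jointly measurable version of $(x,\omega)\mapsto\int_0^T\varphi(x,t,\omega)\,dW_t$ and proving that elementary integrands are dense in the norm $\|\cdot\|$ under the $\sigma$-finiteness and progressive-measurability hypotheses. Once these are in place, the localization and the two Minkowski/Itô estimates above complete the argument; in the body of the paper this is simply invoked via \cite[Theorem 2.2]{Veraar01082012}, where the details are carried out.
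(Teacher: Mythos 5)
Your outline is correct and is essentially the argument of Veraar's Theorem 2.2, which is what the paper cites here without giving its own proof: localization via the continuous adapted process $A_t$ to reduce to the case $\varphi\in L^1(X;L^2(\Omega\times[0,T]))$, verification on elementary integrands, and passage to the limit by the It\^o isometry together with Minkowski's integral inequality. The two points you flag as the real work --- constructing a jointly measurable version of $(x,\omega)\mapsto\int_0^T\varphi(x,t,\omega)\,dW_t$ and the density of elementary integrands in the mixed norm --- are indeed where the cited reference spends its effort, so nothing essential is missing.
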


   The following exponential inequality for martingales is derived from \cite[Lemma 2.1]{inequality}.
   \begin{theorem}	Let $(\Omega,\mathcal{F},P)$ be a probability triple and let $\{M_t\}_{t\geq 0}$ be a locally square integrable martingale w.r.t. the filtration 
  $ \{\mathcal{F}_t\}_{t\geq 0}$, $M_0=0.$ 
  Let \( \langle M\rangle _t \) denote the quadratic variation of \( \{M_t\} \).  Suppose that $|\Delta M_t|=|M_t-M_{t^-} |\leq K$ for all $t>0$, $0\leq K<\infty.$
  Then for each $a>0$, $b>0$,
  \begin{equation*}
  	P(M_t\geq a \ \text{and}\  \langle M\rangle _t \leq b^2 \ \text{for some}\  t)
  	\leq \exp\left(-\frac{a^2}{2(aK+b^2)}\right).
  \end{equation*}\label{exponential ineq}
   \end{theorem}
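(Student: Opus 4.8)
The plan is to run the classical exponential supermartingale argument, i.e.\ the martingale analogue of Bernstein's inequality. Fix $\lambda>0$ and put $c(\lambda)=\big(e^{\lambda K}-1-\lambda K\big)/K^{2}$ (with the convention $c(\lambda)=\lambda^{2}/2$ when $K=0$, the continuous case). The first and central step is to show that
\[
U^{\lambda}_{t}\ :=\ \exp\!\big(\lambda M_{t}-c(\lambda)\langle M\rangle_{t}\big)
\]
is a nonnegative supermartingale with $U^{\lambda}_{0}=1$. For this, apply It\^o's formula to $e^{\lambda M}$ using the canonical decomposition $M=M^{c}+M^{d}$: the continuous martingale part produces the drift $\tfrac{\lambda^{2}}{2}\int_{0}^{t}e^{\lambda M_{s-}}\,d\langle M^{c}\rangle_{s}$, and the jumps produce $\sum_{s\le t}e^{\lambda M_{s-}}\big(e^{\lambda\Delta M_{s}}-1-\lambda\Delta M_{s}\big)$, a nonnegative increasing process whose predictable compensator is controlled as follows. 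Since $y\mapsto(e^{y}-1-y)/y^{2}$ is nondecreasing on $\mathbb{R}$ and $|\lambda\Delta M_{s}|\le\lambda K$, one has $e^{\lambda\Delta M_{s}}-1-\lambda\Delta M_{s}\le c(\lambda)(\Delta M_{s})^{2}$; likewise $\tfrac{\lambda^{2}}{2}\le c(\lambda)$ because $e^{y}-1-y\ge y^{2}/2$ for $y\ge0$. Hence the total drift of $e^{\lambda M}$ is dominated by $c(\lambda)\int_{0}^{t}e^{\lambda M_{s-}}\,d\langle M\rangle_{s}$, where $\langle M\rangle=\langle M^{c}\rangle+\langle M^{d}\rangle$, so $U^{\lambda}$ has nonpositive drift; being a nonnegative local supermartingale, it is a genuine supermartingale (after a standard localization).

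Next, introduce the first passage time $\tau:=\inf\{t\ge0:\ M_{t}\ge a\}$ and the event $A:=\{\,M_{t}\ge a\ \text{and}\ \langle M\rangle_{t}\le b^{2}\ \text{for some}\ t\,\}$. On $A$ we have $\tau<\infty$; by right-continuity of the paths, $M_{\tau}\ge a$; and, since $\tau$ occurs no later than any witnessing time $t$ and $\langle M\rangle$ is nondecreasing, $\langle M\rangle_{\tau}\le b^{2}$. Therefore $U^{\lambda}_{\tau}\ge\exp\!\big(\lambda a-c(\lambda)b^{2}\big)$ on $A$. Optional stopping applied to $\tau\wedge n$ together with Fatou's lemma gives $\mathbb{E}\big[U^{\lambda}_{\tau}\mathbf{1}_{\{\tau<\infty\}}\big]\le U^{\lambda}_{0}=1$, and Markov's inequality then yields
\[
P(A)\ \le\ P\!\big(U^{\lambda}_{\tau}\ge e^{\lambda a-c(\lambda)b^{2}},\ \tau<\infty\big)\ \le\ e^{-\lambda a+c(\lambda)b^{2}},\qquad\text{for every }\lambda>0.
\]

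It remains to optimize in $\lambda$. Take $\lambda=\dfrac{a}{aK+b^{2}}$ (so that $\lambda K<1$ and $1-\lambda K=\dfrac{b^{2}}{aK+b^{2}}$). Using the elementary inequality $e^{y}-1-y\le\dfrac{y^{2}}{2(1-y)}$ for $y\in[0,1)$, with $y=\lambda K$, one obtains $c(\lambda)b^{2}\le\dfrac{\lambda^{2}b^{2}}{2(1-\lambda K)}=\dfrac{\lambda^{2}(aK+b^{2})}{2}$, hence
\[
-\lambda a+c(\lambda)b^{2}\ \le\ -\lambda a+\tfrac{1}{2}\lambda^{2}(aK+b^{2})\ =\ -\frac{a^{2}}{2(aK+b^{2})},
\]
which is precisely the asserted bound. (Choosing instead the truly optimal $\lambda=K^{-1}\log(1+aK/b^{2})$ yields the sharper Bennett-type exponent, which can then be weakened to the stated form.)

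I expect the main obstacle to be the first step: establishing the supermartingale property rigorously for a general locally square-integrable martingale requires care with the canonical decomposition $M=M^{c}+M^{d}$, with the predictable compensator of $\sum_{s}(\Delta M_{s})^{2}$, and with the localization needed to pass from a local supermartingale to a true one; one also has to check the measurability of $A$ and the sample-path facts $M_{\tau}\ge a$ and $\langle M\rangle_{\tau}\le b^{2}$ on $A$. The remaining ingredients---optional stopping, Markov's inequality and the scalar optimization---are routine.
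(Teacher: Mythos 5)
This theorem is not proved in the paper at all: it is imported verbatim from the cited reference (Lemma 2.1 of \cite{inequality}), so there is no in-paper argument to compare against. Your proposal is the standard exponential-supermartingale (Freedman/Bernstein-type) proof of that classical result, and it is essentially correct: the pointwise bounds $e^{\lambda\Delta M_s}-1-\lambda\Delta M_s\le c(\lambda)(\Delta M_s)^2$ and $\tfrac{\lambda^2}{2}\le c(\lambda)$, the stopping-time reduction with $M_\tau\ge a$ and $\langle M\rangle_\tau\le b^2$ on the event in question, the optional stopping plus Fatou step, and the optimization $\lambda=a/(aK+b^2)$ together with $e^y-1-y\le y^2/(2(1-y))$ are all carried out correctly and do yield the exponent $-a^2/(2(aK+b^2))$. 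The one place where you should be a bit more explicit than ``the compensator is controlled as follows'' is the passage from the pathwise bound on the jump sum to a bound on its predictable compensator: since the termwise inequality makes the difference of the two increasing processes itself increasing, their compensators are ordered, which is what lets you replace $[M^d]$ by $\langle M^d\rangle$ and arrive at a drift dominated by $c(\lambda)\int_0^t e^{\lambda M_{s-}}\,d\langle M\rangle_s$. A second genuine (but standard) technical point you correctly flag is that $\langle M\rangle$, being only predictable and increasing for a general locally square-integrable martingale, may itself jump, so the integration-by-parts step showing $U^{\lambda}$ is a local supermartingale needs the observation that the extra jump terms have the favorable sign ($e^{-c(\lambda)\Delta\langle M\rangle_t}\le 1$). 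With those two points made precise, the argument is a complete and self-contained proof of the cited inequality.
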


\section{Main Results}

	In this section, we will compute the results for both the regular and singular cases. The main difference between the regular case and the singular case is that the operator $(K^H)^{-1}$
is expressed as a fractional-order derivative instead of a fractional-order integral, the fractional derivative operator is unbounded.
	Furthermore, this difference leads us to rely on Hölder norms during the proof process in the regular case, and the supreme norm will no longer hold in this case.
	Therefore, we will use the  representation \eqref{Weil} to estimate each term. To do this, we need to simplify the ratio \eqref{ratial}  by using the  Girsanov transform for fractional Brownian motion.

Consider the distribution dependent stochastic differential equation  \eqref{fir}.  
 Define $\mathcal{H}^p=\{K^Hh,h\in L^p([0,1])\} $, where $K^H$ is the operator from $L^p([0,1])$ to $I_{0^+}^{H+\frac{1}{2}}(L^p([0,1]))$, as defined by \eqref{kh operator}. The operator $K^H$ is an isomorphism from $L^2([0,1])$ to $I_{0^+}^{H+\frac{1}{2}}(L^2([0,1]))$ (see \cite[Theorem 2.1]{Decreusefond1999}). So the space $\mathcal{H}^2$ is included in the space of  
 	Hölder continuous functions of order $H$. 
 	 
 	We denote by $\dot{\phi}$ the function in $L^p([0,1])$ such that
 	\begin{equation}
 		K^H \dot{\phi} = \phi - x.  \label{kh exist}
 	\end{equation}
 	The following theorem shows that $k^H$ is invertible under  the conditions of the regular and singular cases.
 	
 	\begin{lemma}
 	\textbf{(Regular case)}  
When \( H > \frac{1}{2} \), for all \( \phi - x \in \mathcal{H}^2 \), there exists a unique \( \dot{\phi} \in L^2([0,1]) \). 

\textbf{(Singular case)}  
When \( \frac{1}{4} < H < \frac{1}{2} \) and \( p > \frac{1}{H} \), for all \( \phi - x \in \mathcal{H}^p \), there exists a unique \( \dot{\phi} \in L^p([0,1]) \) .	\end{lemma}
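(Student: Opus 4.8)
The statement has two halves. Existence of \(\dot\phi\) is immediate: by the very definition \(\mathcal{H}^p=\{K^Hh:h\in L^p([0,1])\}\), the hypothesis \(\phi-x\in\mathcal{H}^p\) (resp.\ \(\phi-x\in\mathcal{H}^2\)) already produces some \(\dot\phi\in L^p([0,1])\) (resp.\ \(L^2([0,1])\)) with \(K^H\dot\phi=\phi-x\). So the real content is \emph{uniqueness}, which is precisely the statement that \(K^H\) is injective on \(L^p([0,1])\): if \(K^Hh=0\) with \(h\in L^p([0,1])\), then \(h=0\) almost everywhere.

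In the regular case \(H>\frac12\) with \(p=2\) this is already contained in a fact recalled before the statement, namely that \(K^H\) is an isomorphism from \(L^2([0,1])\) onto \(I^{H+\frac12}_{0^+}(L^2([0,1]))=\mathcal{H}^2\) (\cite[Theorem~2.1]{Decreusefond1999}), hence in particular injective, so \(\dot\phi=(K^H)^{-1}(\phi-x)\) is uniquely determined. I would nonetheless give one argument that handles both cases, based on the factorization of \(K^H\) into fractional integrals and power weights from \cite[Lemma~10]{Nualart}: with \(\alpha=|H-\frac12|\),
\[
K^Hh=I^{1}_{0^+}\,s^{\alpha}I^{\alpha}_{0^+}\,s^{-\alpha}h\quad(H>\tfrac12),\qquad K^Hh=I^{1-2\alpha}_{0^+}\,s^{\alpha}I^{\alpha}_{0^+}\,s^{-\alpha}h\quad(H<\tfrac12).
\]

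The plan is to peel these operators off one at a time, starting from the outside. Assume \(K^Hh=0\). The outermost fractional integral — \(I^{1}_{0^+}\) in the regular case (order \(1\), where injectivity is the fundamental theorem of calculus) and \(I^{1-2\alpha}_{0^+}\) in the singular case (order \(1-2\alpha=2H\in(0,1)\), where injectivity is the uniqueness of the fractional primitive recalled in Section~2) — is injective on \(L^1([0,1])\), so \(s^{\alpha}I^{\alpha}_{0^+}s^{-\alpha}h=0\) a.e.; since \(s^{\alpha}>0\) on \((0,1]\) this gives \(I^{\alpha}_{0^+}s^{-\alpha}h=0\) a.e.; injectivity of \(I^{\alpha}_{0^+}\) on \(L^1([0,1])\) then gives \(s^{-\alpha}h=0\) a.e.; and since \(s^{-\alpha}>0\) on \((0,1]\) we conclude \(h=0\) a.e.

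The one point that needs care, and which I expect to be the main (if mild) obstacle, is to check that every function appearing in this chain lies in \(L^1([0,1])\), so that the injectivity statements above are applicable. The delicate step is the innermost weight: \(s^{-\alpha}h\in L^1([0,1])\) follows from Hölder's inequality, \(\|s^{-\alpha}h\|_{L^1}\le\|s^{-\alpha}\|_{L^{p'}}\|h\|_{L^p}\), which is finite exactly when \(\alpha p'<1\). In the singular case \(\alpha=\frac12-H\) and the hypothesis \(p>\frac1H\) forces \(p>\frac{2}{1+2H}\), i.e.\ \(p'<\frac1\alpha\), so this holds; in the regular case \(\alpha=H-\frac12<\frac12\) and \(p'=2\), so \(\alpha p'<1\) automatically. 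Once \(s^{-\alpha}h\in L^1\), the boundedness of \(I^{\alpha}_{0^+}\) on \(L^1([0,1])\) (the bound \eqref{Ia} with \(p=1\)) keeps \(I^{\alpha}_{0^+}s^{-\alpha}h\) in \(L^1\), multiplication by the bounded weight \(s^{\alpha}\) keeps it there, and \(I^{1-2\alpha}_{0^+}\) (or \(I^{1}_{0^+}\)) of an \(L^1\) function is again in \(L^1\); so each implication in the previous paragraph is legitimate. Finally, because each of these operators is given by the same integral kernel on \(L^p([0,1])\) as on \(L^2\), the factorization identity itself persists for \(h\in L^p([0,1])\) once these integrability checks are in place (alternatively, by density of \(L^2\cap L^p\) in \(L^p\)). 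This proves that \(K^H\) is injective on \(L^p([0,1])\), hence the uniqueness of \(\dot\phi\) in both cases.
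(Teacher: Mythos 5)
Your proposal is correct, and it actually contains the paper's own argument as its first observation: the paper proves uniqueness exactly by citing the isomorphism $K^H:L^2([0,1])\to I^{H+\frac12}_{0^+}(L^2)$ from \cite[Theorem 2.1]{Decreusefond1999} and then using the inclusion $L^p([0,1])\subset L^2([0,1])$ (which applies because in the singular case $p>\frac1H>2$, and in the regular case $p=2$ so there is nothing to do). Incidentally, the paper's write-up appears to attach the inclusion argument to the regular case and the bare isomorphism to the singular case, i.e.\ with the labels swapped relative to where each step is actually needed; your version sorts this out correctly. What you add beyond the paper is the self-contained injectivity argument via the factorization $K^Hh=I^{1-2\alpha}_{0^+}s^{\alpha}I^{\alpha}_{0^+}s^{-\alpha}h$ (resp.\ $I^{1}_{0^+}s^{\alpha}I^{\alpha}_{0^+}s^{-\alpha}h$), peeling off injective fractional integrals and strictly positive power weights; the integrability check $\alpha p'<1$ is verified correctly in both regimes ($p>\frac1H>\frac{2}{1+2H}$ in the singular case, $\alpha p'=2H-1<1$ in the regular case). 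This buys you an elementary proof that does not lean on the full strength of the Decreusefond--\"Ust\"unel isomorphism, at the cost of a longer argument; note also that your worry about extending the factorization from $L^2$ to $L^p$ is moot, since in both cases the hypotheses place $h$ in $L^2$ already.
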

 \begin{proof}
 	The existence of $ \dot{\phi} $ is evident. Next, we prove its uniqueness.  

For the singular case, leveraging the isomorphism between 
\( L^2([0,1]) \) and \( I_{0^+}^{H+\frac{1}{2}}(L^2) \),
it is straightforward to conclude that $ \dot{\phi} $ is unique in the $ L^p([0,1]) $ sense.  

For the regular case, suppose that there exist $ \dot{\phi}_i $ such that $ K^H \dot{\phi}_i = \phi - x $ for $ i = 1, 2 $. Since $ L^p([0,1]) \subset L^2([0,1]) $, by the uniqueness established for $ p=2 $, it follows that $\dot{\phi_1} = \dot{\phi_2} $ almost surely. Consequently, $ \dot{\phi} $ is unique in the $ L^p([0,1]) $ sense.
	\end{proof}

 	 We aim to analyze the limiting behavior of the ratio  
\[
\gamma_\varepsilon(\phi) = \frac{P(\|X - \phi\| \leq \varepsilon)}{P(\|B^H\| \leq \varepsilon)},  
\]  
as \( \varepsilon \to 0 \), where \( \| \cdot \| \) denotes an appropriate norm.

 	To distinguish, we use $\mathcal{L}^P_{X_s}$ to denote the distribution of $X_s$ under probability $P$.
 Using \eqref{kh exist},  equation \eqref{fir} can be written as 
 	\begin{align}
 	\notag	X_t-\phi_t &=x-\phi_t +\int_0^t b(X_s,\mathcal{L}^P_{X_s} )ds+B^H_t \\ \notag
 		&=\int_0^t b(X_s,\mathcal{L}^P_{X_s} )ds+B^H_t-\int_0^t K^H(t,s)\dot{\phi_s}ds \\
 	\notag	&=B^H_t+\int_0^t b(X_s,\mathcal{L}^P_{X_s} )-K^H(t,s)\dot{\phi_s}ds.
 	\end{align}
 
 Next, we perform the Girsanov transformation with a fixed drift term and distribution-dependent coefficient. Set $Y_t=B^H_t+\phi_t,\ 
 \tilde{B}^H_t=B^H_t
 -\int_0^t b(B^H_s+\phi_s,\mathcal{L}^P_{X_s})
 -K^H(t,s)\dot{\phi_s}ds.$
 	By the expression \eqref{kh int}, 
 	we can obtain
 		\begin{equation*}
 			\tilde{W}_t=W_t-\int_0^t (K^H)^{-1}(\int_0^\cdot b(B^H_r+\phi_r,\mathcal{L}^P_{X_r} )dr)(s)-\dot{\phi}_sds.
 		\end{equation*}
 	Denote 
 	\begin{equation*}
 		\int_0^t u_sds=\int_0^t b(B^H_s+\phi_s,\mathcal{L}^P_{X_s} )ds-(\phi_t-x).
 	\end{equation*}
 	To apply Theorem \ref{Girsanov}, we first verify that \( \int_0^s b(B^H_r + \phi_r, \mathcal{L}^P_{X_r})  dr \in I_{0^+}^{H + \frac{1}{2}}(L^2([0,1])) \). This allows us to conclude that
 	\( \int_0^s b(B^H_r + \phi_r, \mathcal{L}^P_{X_r})  dr -(\phi_s-x)\in I_{0^+}^{H + \frac{1}{2}}(L^2([0,1])) \).

 	\begin{lemma}
 		In both the regular and singular cases, we have that
 		\begin{equation*}
 			\int_0^s b(B^H_r + \phi_r, \mathcal{L}^P_{X_r})  dr \in I_{0^+}^{H + \frac{1}{2}}(L^2([0,1])).
	\end{equation*}
 	\end{lemma}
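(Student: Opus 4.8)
The plan is to write $g(s):=\int_0^s b(B^H_r+\phi_r,\mathcal{L}^P_{X_r})\,dr=(I_{0^+}^1\psi)(s)$, where $\psi(r):=b(B^H_r+\phi_r,\mathcal{L}^P_{X_r})$, and to exploit the semigroup property $I_{0^+}^{\alpha}\circ I_{0^+}^{\beta}=I_{0^+}^{\alpha+\beta}$ of Riemann--Liouville fractional integration (see \cite{Samko1993}) in order to factor $I_{0^+}^1$ through $I_{0^+}^{H+\frac12}$. Under \textnormal{(\textbf{Hyp 1})} (resp.\ \textnormal{(\textbf{Hyp 2})}) the drift $b$ is bounded, so $\psi\in L^\infty([0,1])\subset L^2([0,1])$, with a deterministic bound; this boundedness is the only ingredient needed in the singular case.

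\textbf{Singular case} ($\tfrac14<H<\tfrac12$). Here $0<H+\tfrac12<1$, so $I_{0^+}^1=I_{0^+}^{H+\frac12}\circ I_{0^+}^{\frac12-H}$ and $g=I_{0^+}^{H+\frac12}\!\big(I_{0^+}^{\frac12-H}\psi\big)$. By the $L^2$-boundedness estimate \eqref{Ia} for fractional integrals, $I_{0^+}^{\frac12-H}\psi\in L^2([0,1])$, whence $g\in I_{0^+}^{H+\frac12}(L^2([0,1]))$.

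\textbf{Regular case} ($\tfrac12<H<1$). Now $1<H+\tfrac12<\tfrac32$ and $I_{0^+}^{H+\frac12}=I_{0^+}^1\circ I_{0^+}^{H-\frac12}$, so since $g=I_{0^+}^1\psi$ it suffices to show $\psi\in I_{0^+}^{H-\frac12}(L^2([0,1]))$. First I would show that $r\mapsto\psi(r)$ has a version that is a.s.\ Hölder continuous of some order $\theta$ with $H-\tfrac12<\theta<H$: by \textnormal{(\textbf{Hyp 1})}, $|\psi(r)-\psi(r')|\le\|b_x\|_\infty\big(|B^H_r-B^H_{r'}|+|\phi_r-\phi_{r'}|\big)+L_1\,d_2(\mathcal{L}^P_{X_r},\mathcal{L}^P_{X_{r'}})$, and one then invokes $B^H\in C^{H-\varepsilon}_0$ a.s., $\phi-x\in\mathcal H^2\subset C^{H}$, and $d_2(\mathcal{L}^P_{X_r},\mathcal{L}^P_{X_{r'}})\le\big(\mathbb E|X_r-X_{r'}|^2\big)^{1/2}\le C|r-r'|^{H}$, the last bound following from $X_r-X_{r'}=\int_{r'}^{r}b\,du+(B^H_r-B^H_{r'})$ together with the boundedness of $b$ and the covariance structure of $B^H$; choosing $\varepsilon$ small yields $\theta=H-\varepsilon>H-\tfrac12$. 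Since $\psi$ is bounded and $\theta$-Hölder with $\theta>H-\tfrac12$, the Weyl representation \eqref{Weil} shows that $D_{0^+}^{H-\frac12}\psi$ exists and lies in $L^2([0,1])$ (the term $\psi(x)\,x^{-(H-\frac12)}$ is in $L^2$ because $2(H-\tfrac12)<1$, and the singular integral is bounded because $\theta>H-\tfrac12$); combined with $I_{0^+}^{\,\frac32-H}\psi(x)\to0$ as $x\to0^+$, this gives $\psi=I_{0^+}^{H-\frac12}\big(D_{0^+}^{H-\frac12}\psi\big)\in I_{0^+}^{H-\frac12}(L^2([0,1]))$, cf.\ \cite[Proposition 2.1]{Decreusefond1999}. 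Hence $g\in I_{0^+}^{H+\frac12}(L^2([0,1]))$.

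The main obstacle is the regular case: unlike the singular case, mere boundedness of $b$ (i.e.\ $g$ being Lipschitz) does not suffice, because for $H+\tfrac12>1$ membership in $I_{0^+}^{H+\frac12}(L^2)$ forces the derivative $g'=\psi$ to carry genuine fractional regularity. The two delicate points are therefore (i) quantifying the Wasserstein modulus of continuity of $r\mapsto\mathcal{L}^P_{X_r}$, which rests on the a priori $L^2$-moment bounds for the solution of \eqref{fir}, and (ii) passing from ``$\theta$-Hölder with $\theta>H-\tfrac12$'' to membership in $I_{0^+}^{H-\frac12}(L^2)$ via the Weyl representation.
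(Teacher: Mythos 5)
Your proof is correct and follows essentially the same route as the paper: the regular case is reduced to showing that $r\mapsto b(B^H_r+\phi_r,\mathcal{L}^P_{X_r})$ is H\"older continuous of order strictly greater than $H-\frac12$, using the Lipschitz continuity of $b$, the H\"older regularity of $B^H$ and $\phi$, and the bound $d_2(\mathcal{L}^P_{X_r},\mathcal{L}^P_{X_{r'}})\le(\mathbb{E}|X_r-X_{r'}|^2)^{1/2}$ from the equation itself. You merely make explicit two steps the paper leaves implicit --- the semigroup factorization handling the singular case and the passage from H\"older continuity to membership in $I_{0^+}^{H-\frac12}(L^2)$ via the Weyl representation --- which is a welcome addition but not a different argument.
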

 	\begin{proof}
 		For $H \leq \frac{1}{2}$, this condition is evidently satisfied.  

For \( H > \frac{1}{2} \), in view of \eqref{Weil}, it suffices to demonstrate that \( b(B^H_r + \phi_r, \mathcal{L}^P_{X_r}) \) is Hölder continuous with an exponent strictly greater than \( H - \frac{1}{2} \).
 Since $\phi$ is $H$-Hölder continuous and $B^H$ is $(H - \varepsilon)$-Hölder continuous almost surely for any $\varepsilon > 0$, we have  
\begin{align*}
    &|b(B^H_t + \phi_t, \mathcal{L}^P_{X_t}) - b(B^H_s + \phi_s, \mathcal{L}^P_{X_s})| \\
    \leq &L_1\big(|B^H_t + \phi_t - B^H_s - \phi_s| + [\mathbb{E}_P(X_t-X_s)^2]^{\frac{1}{2}}\big) \\  
    \leq& L_1\big(\Vert B^H \Vert_{H-\varepsilon} |t - s|^{H-\varepsilon} + \Vert \phi \Vert_{H} |t - s|^{H} +\Vert b\Vert_\infty |t-s| +(t - s)^H\big).  
\end{align*} 
Therefore, $b(B^H_r + \phi_r, \mathcal{L}^P_{X_r})$ possesses the Hölder continuity of order strictly greater than $H - \frac{1}{2}$, which implies that  
\[
\int_0^s b(B^H_r + \phi_r, \mathcal{L}^P_{X_r}) \, dr \in I_{0^+}^{H + \frac{1}{2}}(L^2([0,1])).
\]
\end{proof}

 	Applying Theorem \ref{Girsanov}, we have that
 	$\tilde{W}$ is a standard Brownian motion
 	under another probability measure  $\tilde{P}$ and the Radon-Nikodym derivative satisfies
 	\begin{align*}
 		\xi_1&=\frac{d\tilde{P}}{dP}\\
 		&=\exp\left( -\int_0^1 (K^H)^{-1} \left( \int_0^s u_r dr \right) dW_s - \frac{1}{2} \int_0^1 \left[ (K^H)^{-1} \left( \int_0^s u_r  dr \right)\right]^2  ds \right).  
 			\end{align*}
 	The	proof of $\mathbb{E}(\xi_1)=1$ can be referenced in \cite[Lemma 10]
{Nualart} .
 	Using \eqref{kh exist}, we have
 	\begin{align*}
 		Y_t &=\phi_t+\int_0^t b(B^H_s+\phi_s,\mathcal{L}^P_{X_s} )
 		-K^H(t,s)\dot{\phi_s}ds+\tilde{B}^H_t\\
 		&=x+\int_0^t b(Y_s,\mathcal{L}^P_{X_s})ds+\tilde{B}^H_t.
 	\end{align*}
 	 Now there are two  weak solutions $(X, B^H)$ and $(Y, \tilde{B}^H)$ defined on distinct  probability spaces $(\Omega,\mathcal{F},P)$ and $(\Omega,\mathcal{F},\tilde{P})$ respectively. These solutions satisfy the following  stochastic differential equation driven by fractional Brownian motion:

\begin{equation*}
Z_t = x + \int_0^t b(Z_s, \mathcal{L}^P_{X_s})ds + W^H_t,
\end{equation*}
where  $W^H$ represents the driving noise ($B^H$ in the original probability space and $\tilde{B}^H$ in the tilted space).  
 	So the ratio can be simplified to 
 		\begin{align}
 	\notag	P(\Vert X-\phi\Vert\leq \varepsilon )
 		&=\tilde{P}(\Vert Y-\phi\Vert\leq \varepsilon )=\tilde{P}(\Vert B^H \Vert\leq\varepsilon )
 		=\int_\Omega I_{\Vert B^H \Vert\leq\varepsilon}d\tilde{P}\\
 	\notag	&=\int_\Omega I_{\Vert B^H\Vert\leq\varepsilon}\xi_1 dP
 		=\mathbb{E}(\xi_1I_{\Vert B^H\Vert\leq\varepsilon})\\
 	\notag	&=\mathbb{E}\Bigg[\exp\Bigg( -\int_0^1 (K^H)^{-1} \big( \int_0^s u_s  dr \big) dW_s \\ 
 	&\quad\quad- \frac{1}{2} \int_0^1 \big[ (K^H)^{-1} \big( \int_0^s u_r  dr \big) \big]^2 \, dr \Bigg)I_{\Vert B^H\Vert\leq \varepsilon}\Bigg].\label{Gir}
 		\end{align}

 	For simplicity,  \( \mathcal{L}_{X} \) will continue to be used to denote \( \mathcal{L}^P_{X} \) throughout this paper.

	Next, we will demonstrate that in both the regular and singular cases, the Onsager-Machlup functional can be represented in the following form:
\begin{equation}\label{jphi}
J(\phi, \dot{\phi}) =-\frac{1}{2}
\int_0^1  \left(\dot{\phi}_s - (K^H)^{-1} \int_0^s b(\phi_u, \mathcal{L}_{X_u})  du \right)^2 + d_H b_x(\phi_s, \mathcal{L}^P_{X_s})  ds.
\end{equation}
 	\begin{remark}
 		The specific expressions of \( J(\phi, \dot{\phi}) \) in the singular and regular cases differ because \( (K^H)^{-1} \) has different representations in these cases. As shown in   \eqref{g1/2} and \eqref{l1/2}, the specific expression is  
\[
(K^H)^{-1} \int_0^s b(\phi_u, \mathcal{L}_{X_u}) \, du =
\begin{cases}
s^{-\alpha} I_{0^+}^\alpha s^\alpha b(\phi_s, \mathcal{L}_{X_s}), \quad H \leq 1/2, \\
s^{\alpha} D_{0^+}^\alpha s^{-\alpha} b(\phi_s, \mathcal{L}_{X_s}), \quad H \geq 1/2.
\end{cases}
\]  
\end{remark}
 	\begin{remark}
 	When the drift term does not depend on the distribution, our result simplifies to  
\begin{equation*}  
J(\phi, \dot{\phi}) =-\frac{1}{2}
\int_0^1  \left(\dot{\phi}_s - (K^H)^{-1}\left( \int_0^\cdot b(\phi_u)  du\right)(s) \right)^2 + d_H b'(\phi_s)  ds,
\end{equation*}  
which coincides with the classical fractional Onsager-Machlup functional (see \cite{Nualart}).

 	\end{remark}

\subsection{Singular case}
	In this section we will compute the Onsager-Machlup functional for $\frac{1}{4}< H<\frac{1}{2}.$
	\begin{theorem}\label{sincase}
		Let $X$ be the solution of  equation \eqref{fir}. Suppose that $\phi$ is a function such that $\phi-x \in \mathcal{H}^p $ with $p>\frac{1}{H}$, and let $b$ satisfy assumption  \textnormal{(\textbf{Hyp 1})} and $\frac{1}{4}< H<\frac{1}{2}.$ Then the Onsager-Machlup functional of $X$ for 
		norms $\Vert \cdot\Vert _\beta$ with $0<\beta<H-\frac{1}{4}$ and $\Vert \cdot\Vert _\infty$
		can be expressed as follows:
		\begin{equation}
			J(\phi,\dot{\phi})=-\frac{1}{2}
			\int_0^1  (\dot{\phi}_s-s^{-\alpha} I_{0^+}^\alpha s^\alpha b(\phi_s,\mathcal{L}_{X_s} ))^2+d_H b_x(\phi_s,\mathcal{L}_{X_s})ds,\label{oms}
 		\end{equation}
	where 
	\begin{equation*}
		d_H=\left(\frac{2H\Gamma(\frac{3}{2}-H)\Gamma(H+\frac{1}{2})}{\Gamma(2-2H)}\right)^\frac{1}{2},
	\end{equation*}
	$\alpha=\frac{1}{2}-H$ and $K^H\dot{\phi}=\phi-x.$
	\end{theorem}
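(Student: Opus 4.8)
The starting point is the Girsanov-transformed expression \eqref{Gir} for $P(\Vert X-\phi\Vert\le\varepsilon)$, together with the small-ball denominator $P(\Vert B^H\Vert\le\varepsilon)$. Writing $\int_0^s u_r\,dr=\int_0^s b(B^H_r+\phi_r,\mathcal L_{X_r})\,dr-(\phi_s-x)$ and applying $(K^H)^{-1}$, one splits the linear stochastic-integral term using $(K^H)^{-1}(\phi-x)=\dot\phi$ and $(K^H)^{-1}\big(\int_0^\cdot b(B^H_r+\phi_r,\mathcal L_{X_r})\,dr\big)(s)$. The plan is to perform a Taylor expansion of the drift \emph{only in the spatial variable}, $b(B^H_s+\phi_s,\mathcal L_{X_s})=b(\phi_s,\mathcal L_{X_s})+b_x(\phi_s,\mathcal L_{X_s})B^H_s+R_s$ with $|R_s|\le k|B^H_s|^2$ by \textnormal{(\textbf{Hyp 1})}(4). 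This separates $\gamma_\varepsilon(\phi)$ into (i) a deterministic factor $\exp\big(-\tfrac12\int_0^1(\dot\phi_s-(K^H)^{-1}\int_0^\cdot b(\phi_u,\mathcal L_{X_u})\,du(s))^2\,ds\big)$ coming from the pure $\tilde W$-drift-squared term together with the deterministic part of the linear integral, (ii) a first-order Gaussian term $\exp\big(-\int_0^1(K^H)^{-1}(\int_0^\cdot b_x(\phi_r,\mathcal L_{X_r})B^H_r\,dr)(s)\,dW_s\big)$ and its interaction with the constant drift, (iii) remainder terms built from $R_s$ and from cross-products, all conditioned on $\{\Vert B^H\Vert\le\varepsilon\}$.

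The first main task is to show all the ``error'' contributions in (ii)–(iii) have conditional expectation tending to $1$, so by \cref{lem8} their product does too. For the terms that are genuinely negligible (those involving $R_s$, quadratic in $B^H$, or the cross term between $B^H_s$ and the constant drift), I would express $(K^H)^{-1}$ via the Weyl representation \eqref{Weil}, i.e. $(K^H)^{-1}h=s^{-\alpha}I_{0^+}^\alpha s^\alpha h'$ is \emph{not} available here — rather in the singular case $(K^H)^{-1}h=s^{\alpha}D^{\alpha}_{0^+}s^{-\alpha}D^{1-2\alpha}_{0^+}h$, so on the primitive $\int_0^\cdot b(\cdots)\,dr$ one should first use the smoothing $D^{1-2\alpha}_{0^+}$ against the integral and then the fractional derivative $D^\alpha_{0^+}$, bounding everything by $\Vert B^H\Vert_\infty$ (or the Hölder norm) on the event $\{\Vert B^H\Vert\le\varepsilon\}$; combined with the boundedness of $b,b_x$ from \textnormal{(\textbf{Hyp 1})}(2) and the martingale exponential inequality \cref{exponential ineq}, one gets $\mathbb E(\exp(c z_i)\mid \Vert B^H\Vert<\varepsilon)\to 1$. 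For the genuinely first-order Gaussian term in (ii), the symmetry trick is needed: writing it as a Wiener integral $\int_0^1 g_\varepsilon(s)\,dW_s$ whose kernel $g$ is deterministic after conditioning is \emph{not} legitimate because $B^H_r$ inside is itself $\int_0^r K^H(r,v)\,dW_v$; so one rewrites $\int_0^1(K^H)^{-1}(\int_0^\cdot b_x(\phi_r,\mathcal L_{X_r})\int_0^r K^H(r,v)\,dW_v\,dr)(s)\,dW_s$ as a double stochastic integral $2\int_0^1\!\int_0^t f(s,t)\,dW_s\,dW_t$ with a symmetric kernel $f$, identify $f$ explicitly, verify $f\in L^2([0,1]^2)$ and that $K(f)$ is nuclear, and apply \cref{thm3} to get the factor $\exp(-\operatorname{Tr} f)$. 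The computation of $\operatorname{Tr} f=\int_0^1 f(s,s)\,ds$ should collapse, via \eqref{ffubini}/\eqref{fffubini} and the explicit form \eqref{kh} of $K^H$, to $\tfrac12\int_0^1 d_H\,b_x(\phi_s,\mathcal L_{X_s})\,ds$ with the stated constant $d_H=\big(2H\Gamma(\tfrac32-H)\Gamma(H+\tfrac12)/\Gamma(2-2H)\big)^{1/2}$; here one must use Fubini for stochastic integrals, \cref{fubini}, to legitimately swap the $dr$ and $dW_v$ orderings.

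A subtle point worth isolating: the measure $\mathcal L_{X_s}=\mathcal L^P_{X_s}$ is a \emph{fixed} deterministic element of $\mathscr P_2(\mathbb R)$ (it does not change under the conditioning $\{\Vert B^H\Vert\le\varepsilon\}$, since it was frozen before the Girsanov transform), so $b(\phi_s,\mathcal L_{X_s})$ and $b_x(\phi_s,\mathcal L_{X_s})$ are honest deterministic functions of $s$ — this is what makes the Taylor-in-space-only strategy work and lets us treat the distribution argument as an inert parameter throughout. I would also need the a priori control $[\mathbb E_P|X_s|^2]^{1/2}<\infty$ and the Lipschitz bound \textnormal{(\textbf{Hyp 1})}(3) only to justify that $s\mapsto b(\phi_s,\mathcal L_{X_s})$ and $s\mapsto b_x(\phi_s,\mathcal L_{X_s})$ are regular enough (measurable and bounded) for the fractional operators to act and for $(K^H)^{-1}\int_0^\cdot b(\phi_u,\mathcal L_{X_u})\,du$ to be well defined in $L^2$, which follows from the preceding lemma. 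Finally, combining: $\gamma_\varepsilon(\phi)\to \exp\big(-\tfrac12\int_0^1(\dot\phi_s-s^{-\alpha}I_{0^+}^\alpha s^\alpha b(\phi_s,\mathcal L_{X_s}))^2\,ds\big)\cdot\exp\big(-\tfrac12\int_0^1 d_H b_x(\phi_s,\mathcal L_{X_s})\,ds\big)\cdot 1$, which is \eqref{oms}.

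\textbf{Main obstacle.} The hard part is the second-order (trace) term: correctly identifying the symmetric kernel $f$ obtained after writing the first-order drift correction as a double Wiener integral — this requires carefully commuting $(K^H)^{-1}$ (a composition of two fractional derivatives in the singular regime) past a stochastic integral and past the $dr$-integration, keeping track of the $s^{\pm\alpha}$ weights — and then proving $K(f)$ is nuclear and evaluating $\operatorname{Tr} f$ to the precise constant $d_H$. The unboundedness of the fractional derivative operator in the singular case is exactly what makes the requisite $L^2$- and nuclearity estimates delicate, and is the reason the supremum norm still works here only because $\beta<H-\tfrac14$ keeps the relevant kernels square-integrable.
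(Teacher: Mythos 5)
Your plan follows the same architecture as the paper's proof: Girsanov with the law $\mathcal L^P_{X_s}$ frozen, a Taylor expansion of $b$ in the spatial variable only, a double Wiener integral and trace computation for the first-order correction, a martingale argument for the remainder, and Theorem \ref{lem8} to assemble the pieces. Two points in the plan are nevertheless genuinely off, and the second would leave a hole if the proof were executed as written.

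First, you assert that the representation $(K^H)^{-1}h=s^{-\alpha}I_{0^+}^{\alpha}s^{\alpha}h'$ is ``not available'' in the singular case and that one must instead work with the composition $s^{\alpha}D^{\alpha}_{0^+}s^{-\alpha}D^{1-2\alpha}_{0^+}h$. This is backwards: \eqref{l1/2} is precisely the $H\le\tfrac12$ formula for differentiable $h$, and $h(s)=\int_0^s b(B^H_r+\phi_r,\mathcal L_{X_r})\,dr$ is differentiable, so the operator acting on the drift is the \emph{bounded} weighted fractional integral $s^{-\alpha}I_{0^+}^{\alpha}s^{\alpha}$. That boundedness is what makes the elementary pointwise estimates for the terms $A_3$, $A_4$ and for the kernel $f(s,u)$ go through, and it is also exactly why the supremum norm is admissible in the singular case; the unbounded fractional derivative, and the consequent restriction to H\"older norms with $\beta>H-\tfrac12$, is a feature of the \emph{regular} case. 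Your closing remark inverts this.

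Second, and more seriously, your treatment of the remainder term $\int_0^1 s^{-\alpha}I_{0^+}^{\alpha}s^{\alpha}R_s\,dW_s$ omits the step in which the hypotheses $\beta<H-\tfrac14$ and $H>\tfrac14$ are actually used. Theorem \ref{exponential ineq} yields only an \emph{unconditional} tail bound of the form $\exp(-\xi^2/(2k'\varepsilon^4))$ on the event $\{\Vert B^H\Vert_\beta\le\varepsilon\}$ (using $\sup_s|R_s|\le k\varepsilon^2$, hence quadratic variation $O(\varepsilon^4)$). To pass to a bound \emph{conditional} on that event one must divide by $P(\Vert B^H\Vert_\beta\le\varepsilon)$, which by the small-ball estimate \eqref{sbb holder} costs a factor $\exp(K_2\varepsilon^{-1/(H-\beta)})$; the conditional exponential moment then tends to $1$ only when $\varepsilon^{-4}$ dominates $\varepsilon^{-1/(H-\beta)}$, i.e. when $H-\beta>\tfrac14$, and for the supremum norm one uses \eqref{sup} and needs $H>\tfrac14$. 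This competition is the entire content of the range restriction in the statement; it has nothing to do with square-integrability or nuclearity of the kernel $f$, which hold for all $0<\beta<H$. Without it, your claim that the remainder contributions have conditional expectation tending to $1$ is unsupported. The rest of the plan (freezing the law, symmetrizing $f$, computing $\operatorname{Tr}\tilde f=\tfrac12\int_0^1 c_H\Gamma(1-\alpha)b_x(\phi_s,\mathcal L_{X_s})\,ds$ with $c_H\Gamma(1-\alpha)=d_H$) matches the paper.
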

	\begin{proof}
	We first prove the case of the Hölder norm. Using \eqref{l1/2} and \eqref{Gir}, we have
\begin{align*}
P(\Vert X-\phi\Vert_\beta \leq \varepsilon)
&=\mathbb{E}\Big(\exp\Big[ -\int_0^1 (K^H)^{-1} \left( \int_0^s u_r  dr \right) dW_s \\
 &\qquad\qquad\quad- \frac{1}{2} \int_0^1 \left( (K^H)^{-1} \left( \int_0^s u_r  dr \right) \right)^2 ds\Big] I_{\Vert B^H\Vert_\beta \leq \varepsilon}\Big) \\
&=\mathbb{E}\Bigg(\exp\Bigg[\int_0^1 \big(s^{-\alpha}I_{0^+}^\alpha s^\alpha b(B^H_s+\phi_s, \mathcal{L}_{X_s}) - \dot{\phi}_s\big) dW_s \\
&\qquad\qquad\quad - \frac{1}{2}\int_0^1 \big(s^{-\alpha}I_{0^+}^\alpha s^\alpha b(B^H_s+\phi_s, \mathcal{L}_{X_s}) - \dot{\phi}_s\big)^2 ds\Bigg] I_{\Vert B^H\Vert_\beta \leq \varepsilon}\Bigg) \\
&=\mathbb{E}\Big(\exp(A_1 + A_2 + A_3 + A_4) I_{\Vert B^H\Vert_{\beta} \leq \varepsilon}\Big) \\
&\quad \cdot \exp\Bigg(-\frac{1}{2}\int_0^1 \big(\dot{\phi}_s - s^{-\alpha}I_{0^+}^\alpha s^\alpha b(\phi_s, \mathcal{L}_{X_s})\big)^2 ds\Bigg),
\end{align*}

 where
 \begin{align*}
 	A_1 &= \int_0^1 s^{-\alpha} I_{0^+}^\alpha s^\alpha b(B^H_s+\phi_s, \mathcal{L}_{X_s}) dW_s,\\
 	A_2 &= -\int_0^1 \dot{\phi}_s dW_s,\\
	A_3 &= -\frac{1}{2} \int_0^1  \big(s^{-\alpha} I_{0^+}^\alpha s^\alpha b(B^H_s+\phi_s, \mathcal{L}_{X_s})\big)^2 - \big(s^{-\alpha} I_{0^+}^\alpha s^\alpha b(\phi_s, \mathcal{L}_{X_s})\big)^2  ds,   \\
	A_4 &= \int_0^1 \dot{\phi}_s s^{-\alpha} I_{0^+}^\alpha s^\alpha \left( b(B^H_s+\phi_s, \mathcal{L}_{X_s}) - b(\phi_s, \mathcal{L}_{X_s}) \right) ds.
 \end{align*}

	\MakeUppercase{\romannumeral 1}.Term $A_2$:
	
	For any $c\in \mathbb{R},$ using Theorem $\ref{thm2}$, we obtain
	\begin{equation}\label{as2}
		\lim_{\varepsilon\to 0}\mathbb{E}(\exp(cA_2)\mid \Vert B^H\Vert_\beta \leq \varepsilon)=1.
	\end{equation}
		
	\MakeUppercase{\romannumeral 2}.Term $A_4$:

	According to that \(b\) is Lipschitz continuous with constant \(L_1\), under the condition $\Vert B^H\Vert_\beta\leq \varepsilon$, we have
\begin{align}
\notag &\left| s^{-\alpha} I_{0^+}^\alpha s^\alpha \left( b(B^H_s + \phi_s, \mathcal{L}_{X_s}) - b(\phi_s, \mathcal{L}_{X_s}) \right) \right| \\
\notag =& \frac{1}{\Gamma(\alpha)} s^{-\alpha}
\left| \int_0^s r^\alpha (s-r)^{\alpha-1} \left( b(B^H_r + \phi_r, \mathcal{L}_{X_r}) - b(\phi_r, \mathcal{L}_{X_r}) \right) dr \right| \\
\notag \leq & \frac{L_1}{\Gamma(\alpha)} s^{-\alpha}
\int_0^s r^\alpha (s-r)^{\alpha-1} |B^H_r| dr \\
\notag \leq & \frac{L_1 \varepsilon}{\Gamma(\alpha)} s^{-\alpha}
\int_0^s r^\alpha (s-r)^{\alpha-1} dr \\
\notag \leq &\frac{L_1 \varepsilon}{\Gamma(\alpha)} s^{\alpha}
\int_0^1 x^\alpha (1-x)^{\alpha-1} dx \\
= &\frac{\beta(1+\alpha, \alpha)}{\Gamma(\alpha)} L_1 s^\alpha \varepsilon .\label{a416}
\end{align}
	From \eqref{a416}, it is easy to obtain the estimate for the term	$A_4$:
\begin{align*}
 	|A_4| &\leq\int_0^1 |\dot{\phi}_s| |s^{-\alpha} I_{0^+}^\alpha s^\alpha \left( b(B^H_s+\phi_s, \mathcal{L}_{X_s}) - b(\phi_s, \mathcal{L}_{X_s}) \right)|ds \\
  		&\leq \frac{\beta(1+\alpha, \alpha)}{\Gamma(\alpha)} L_1 \varepsilon\int_0^1 |\dot{\phi_s}|s^\alpha ds \\
  		&\leq \frac{\beta(1+\alpha, \alpha)}{\Gamma(\alpha)} L_1\Vert \dot{\phi}\Vert_{L^p} \varepsilon.
 \end{align*}
 	Therefore, we conclude
 	\begin{equation}
 		\lim_{\varepsilon \to 0} \mathbb{E} \left( \exp(A_4) \mid \Vert B^H \Vert_\beta \leq \varepsilon \right) \\
= 1. \label{as4}
\end{equation}

 \MakeUppercase{\romannumeral 3}.
 Term $A_3$:
 
 We first derive an estimate similar to \eqref{a416} under the condition $\Vert B^H\Vert_\beta\leq \varepsilon$:
 \begin{align}
\notag &\left| s^{-\alpha} I_{0^+}^\alpha s^\alpha \left( b(B^H_s + \phi_s, \mathcal{L}_{X_s}) +b(\phi_s, \mathcal{L}_{X_s}) \right) \right| \\
\notag =& \frac{1}{\Gamma(\alpha)} s^{-\alpha}
\left| \int_0^s r^\alpha (s-r)^{\alpha-1} \left( b(B^H_r + \phi_r, \mathcal{L}_{X_r}) + b(\phi_r, \mathcal{L}_{X_r}) \right) dr \right| \\
\notag \leq & \frac{2\Vert b\Vert_\infty }{\Gamma(\alpha)} s^{-\alpha}
\int_0^s r^\alpha (s-r)^{\alpha-1}  dr \\
\notag \leq & \frac{2\Vert b\Vert_\infty  }{\Gamma(\alpha)} s^{-\alpha}
\int_0^s r^\alpha (s-r)^{\alpha-1} dr \\
\notag \leq &\frac{2\Vert b\Vert_\infty  }{\Gamma(\alpha)} s^{\alpha}
\int_0^1 x^\alpha (1-x)^{\alpha-1} dx \\
= &\frac{\beta(1+\alpha, \alpha)}{\Gamma(\alpha)} 2\Vert b\Vert_\infty   \label{a316}.
\end{align}
Using \eqref{a416} and \eqref{a316}, we can obtain the estimate for $A_3$:
\begin{align*}
|A_3| &\leq \frac{1}{2}\int_0^1 \left| s^{-\alpha} I_{0^+}^\alpha s^\alpha \left( b(B^H_s + \phi_s, \mathcal{L}_{X_s}) -b(\phi_s, \mathcal{L}_{X_s}) \right) \right| \\
&\quad\quad\quad\ \cdot\left| s^{-\alpha} I_{0^+}^\alpha s^\alpha \left( b(B^H_s + \phi_s, \mathcal{L}_{X_s}) +b(\phi_s, \mathcal{L}_{X_s}) \right) \right|  ds \\
&\leq \frac{\beta(1+\alpha, \alpha)^2}{\Gamma(\alpha)^2} L_1\Vert \dot{\phi}\Vert_{L^p} \Vert b\Vert_\infty\varepsilon.
\end{align*}
	Therefore,
	\begin{equation}
		\lim_{\varepsilon \to 0} \mathbb{E}\big(\exp(A_3) \mid \Vert B^H \Vert_\beta \leq \varepsilon\big) \\
        = 1.\label{as3}
	\end{equation}

	\MakeUppercase{\romannumeral 4}.Term $A_1$:

	Based on the assumption that $b$ satisfies hypothesis (\textbf{Hyp 1}), we can expand  $b(B^H_s+\phi_s,\mathcal{L}_{X_s})$ as
\[
b(B^H_s+\phi_s,\mathcal{L}_{X_s}) = b(\phi_s,\mathcal{L}_{X_s}) + b_x(\phi_s,\mathcal{L}_{X_s})B^H_s + R_s,
\]  
where the remainder term \( R_s \) satisfies  
\begin{equation}\label{Rs}
	\sup_{0 \leq s \leq 1} |R_s| \leq k \varepsilon^2,
\end{equation}
provided that \( \Vert B^H \Vert_\beta \leq \varepsilon \).
Hence, we can decompose \( A_1 \) into three parts:
\begin{align*}
	A_1 &= \int_0^1 s^{-\alpha} I_{0^+}^\alpha s^\alpha  b(B^H_s+\phi_s,\mathcal{L}_{X_s}) \, dW_s \\
	&= \int_0^1 s^{-\alpha} I_{0^+}^\alpha s^\alpha \big(b(\phi_s,\mathcal{L}_{X_s}) + b_x(\phi_s,\mathcal{L}_{X_s})B^H_s + R_s\big) \, dW_s \\
	&= C_1 + C_2 + C_3,
\end{align*}
where
\begin{align*}
C_1 &= \int_0^1 s^{-\alpha} I_{0^+}^\alpha s^\alpha  b(\phi_s,\mathcal{L}_{X_s}) \, dW_s, \\
C_2 &= \int_0^1 s^{-\alpha} I_{0^+}^\alpha s^\alpha  \big(b_x(\phi_s,\mathcal{L}_{X_s}) B^H_s\big) \, dW_s, \\
C_3 &= \int_0^1 s^{-\alpha} I_{0^+}^\alpha s^\alpha  R_s \, dW_s.
\end{align*}

For the term \( C_1 \), it can be directly obtained using Theorem \ref{thm2} that
\[
\lim_{\varepsilon \to 0} \mathbb{E}\big(\exp(cC_1) \mid \| B^H \|_\beta \leq \varepsilon\big) = 1, \quad \forall c \in \mathbb{R}.
\]

	We can represent $C_2$ in the form of a double stochastic integral. Applying the	representation \eqref{Weil}, we obtain that
	\begin{align*}
		\Gamma(1-\alpha)C_2 &=\Gamma(1-\alpha) \int_0^1 s^{-\alpha} I_{0^+}^\alpha s^\alpha (b_x(\phi_s,\mathcal{L}_{X_s} )B^H_s) dW_s\\
		& =\frac{1}{\Gamma(\alpha)} \int_0^1
		s^{-\alpha}\int_0^s r^\alpha b_x(\phi_r,\mathcal{L}_{X_r} )B^H_r(s-r)^{\alpha-1}drdW_s\\
		&=\frac{1}{\Gamma(\alpha)} \int_0^1
		s^{-\alpha}\int_0^s r^\alpha b_x(\phi_r,\mathcal{L}_{X_r} )(s-r)^{\alpha-1} \int_0^r K^H(r,u)dW_u dr dW_s.
		 	\end{align*}
	
	Using \eqref{kh} we obtain 
			\begin{align*}
			|K^H(r,u)|&\leq c_H((r-u)^{-\alpha}+\int_u^r(\theta-u)^{-\alpha-1}\left(1-\left(\frac{u}{\theta} \right)^\alpha \right)d\theta )\\
			& \leq c_H\left((r-u)^{-\alpha}+u^{-\alpha}\int_u^r(\frac{\theta}{u}-1)^{-\alpha-1}\left(1-\left(\frac{\theta}{u} \right)^{-\alpha} \right)d\frac{\theta}{u}\right)\\
			&\leq  c_H\left((r-u)^{-\alpha}+u^{-\alpha}\int_1^\infty (x-1)^{-\alpha-1}(1-x^{-\alpha})dx\right)\\
			&\leq \tilde{k}((r-u)^{-\alpha}+u^{-\alpha}),
	\end{align*}
	where $\tilde{k}= c_H+\int_1^\infty (x-1)^{-\alpha-1}(1-x^{-\alpha})dx$. Therefore,
		\begin{align}
		\notag	&\int_0^s r^\alpha b_x(\phi_s,\mathcal{L}_{X_s} )(s-r)^{\alpha-1}\left( \int_0^r K^H(r,u)^2 du\right)^\frac{1}{2}dr\\
		\notag	&\leq \tilde{k} \int_0^s r^\alpha \Vert b_x\Vert_\infty (s-r)^{\alpha-1}\left( \int_0^r  ((r-u)^{-2\alpha}+u^{-2\alpha})du\right)^\frac{1}{2}dr\\
			&\leq \tilde{k}\frac{2}{1-2\alpha}\Vert b_x\Vert_\infty \int_0^s r^{1-\alpha}  (s-r)^{\alpha-1}dr<\infty.\label{fbn1}
		\end{align}

		According to Theorem \ref{fubini} and \eqref{fbn1}, we have 
	 \begin{equation*}
	 	C_2=\int_0^1\int_0^s f(s,u)dW_udW_s,
	 \end{equation*}
	 where
	 \begin{align*}
	 	f(s,u)=&\frac{s^{-\alpha}}{\Gamma(\alpha)}
	 	\int_u^s r^\alpha b_x(\phi_r,\mathcal{L}_{X_r} )(s-r)^{\alpha-1} K^H(r,u)dr.
	 \end{align*}
	 From the expression, it is evident that \( f(s,u) \) is undefined when \( s = u \). However, we can investigate the limit as \( u \to s \) to ensure that \( f(s,u) \) becomes a continuous function on \( [0,1] \times [0,s] \).
	 
	 Using the expression of the kernel $K^H$, we obtain
\begin{align*}
f(s, u) &= \frac{c_H}{\Gamma(\alpha)} s^{-\alpha}
\int_u^s r^\alpha b_x(\phi_r,\mathcal{L}_{x_r}) (s - r)^{\alpha-1}(r - u)^{-\alpha} dr \\
&\quad + \frac{c_H \left(\frac{1}{2} - H \right)}{\Gamma(\alpha)}  s^{-\alpha}
\int_u^s \int_u^r r^\alpha b_x(\phi_r,\mathcal{L}_{x_r}) (s - r)^{\alpha-1} \\
&\quad\quad\quad\quad \qquad\qquad\qquad\quad\quad\cdot (\theta - u)^{-\alpha-1} \left(1 - \left(\frac{u}{\theta}\right)^\alpha\right) d\theta dr \\
&= (D_1 + D_2)(s, u),
\end{align*}
	 where
	 \begin{equation*}
	 	D_1(s,u)= \frac{c_H}{\Gamma(\alpha)} s^{-\alpha}
\int_u^s r^\alpha b_x(\phi_r,\mathcal{L}_{X_r} )(s - r)^{\alpha-1}(r - u)^{-\alpha} dr 
	 \end{equation*}
	 and
	 \begin{align*}
	 	D_2(s,u)&=\frac{c_H \left(\frac{1}{2} - H \right)}{\Gamma(\alpha)}  s^{-\alpha}
\int_u^s \int_u^r r^\alpha b_x(\phi_r,\mathcal{L}_{X_r} )(s - r)^{\alpha-1}\\
&\quad\quad\qquad\qquad\qquad\qquad\quad \cdot (\theta - u)^{-\alpha-1} \left(1 - \left(\frac{u}{\theta}\right)^\alpha\right) d\theta dr.
	 \end{align*}
	The change of variable $x=\frac{r-u}{s-u}$ yields that
	\begin{align*}
		D_1(s,u)&= \frac{c_H}{\Gamma(\alpha)} s^{-\alpha} \int_0^1 ((s-u)x+u)^\alpha 
		b_x(\phi_{(s-u)x+u},\mathcal{L}_{X_{(s-u)x+u}} )\\
		&\quad \quad\quad\qquad\quad\cdot(1-x)^{\alpha-1}x^{-\alpha}dx.
	\end{align*}	 
	 Letting $u\to s$ we have
	\begin{align*}
D_1(s, s) &= \frac{c_H}{\Gamma(\alpha)} b_x(\phi_s,\mathcal{L}_{X_s} )
\int_0^1 (1 - x)^{\alpha-1}x^{-\alpha} dx \\
&= c_H \, \Gamma(1 - \alpha) b_x(\phi_s,\mathcal{L}_{X_s} ).
\end{align*}
	 For the term \( D_2 \), by making a variable substitution \( v = \frac{\theta-u}{r-u}\), we obtain that
	 \begin{equation*}
	 	D_2(s, u) =    \frac{c_H \alpha}{\Gamma(\alpha)}  s^{-\alpha} \int_u^s r^\alpha b_x(\phi_r,\mathcal{L}_{X_r} ) (s - r)^{\alpha-1} (r - u)^{-\alpha} B(u,r)  dr,
	 \end{equation*}
	 where
	 \begin{equation*}
	 	B(u,r)=\int_0^1 v^{-\alpha-1} \left( 1 - \left( \frac{u}{(r-u)v+u} \right)^{\alpha} \right) dv.
	 \end{equation*}
	 Letting $x=\frac{r-u}{s-u}$, we have 	
	 \begin{align*}
	 	D_2(s, u) = &   \frac{c_H \alpha}{\Gamma(\alpha)}  s^{-\alpha} \int_0^1 ((s-u)x+u)^\alpha b_x(\phi_{(s-u)x+u},\mathcal{L}_{X_{(s-u)x+u}} )\\
	 	&\qquad\qquad\quad\cdot (1-x)^{\alpha-1} x^{-\alpha} B(u,(s-u)x+u)  dx,
	 \end{align*}
	 and then 
	 \begin{equation*}
	 	D_2(s, s) =c_H \alpha \Gamma(1-\alpha)b_x(\phi_{s},\mathcal{L}_{X_{s}} )B(s,s)=0.
	 \end{equation*}
	 Therefore, $f(s,u)$ is continuous on $[0,1] \times [0,s]$ and
	 \begin{equation*}
	 	f(s,s)=c_H \, \Gamma(1 - \alpha) b_x(\phi_s,\mathcal{L}_{X_s} ).
	 \end{equation*}
	 
	 Define    
	 \begin{equation*}
	 	\tilde{f}(s,u)=\frac{f(s,u)I_{u\leq s}+f(u,s)I_{s<u}}{2},
	 \end{equation*}
	 where $\tilde{f}(s,u)$ is a continuous and symmetric function on $[0,1]\times [0,1]$ and  
	 \begin{align*}
	 	\int_0^1\int_0^1 \tilde{f}(s,u) dW_u dW_s&=\int_0^1\int_0^s \frac{\tilde{f}(s,u)}{2} dW_u dW_s
	 	+\int_0^1\int_s^1 \frac{\tilde{f}(u,s)}{2} dW_udW_s \\
	 	&=\int_0^1\int_0^s \frac{\tilde{f}(s,u)}{2} dW_u dW_s
		+\int_0^1\int_0^u \frac{\tilde{f}(u,s)}{2} dW_sdW_u\\
		&=C_2.
	 \end{align*}
	Proof that $K(\tilde{f})$ is nuclear can be referenced in \cite[Lemma 13]{Nualart}. According to Theorem \ref{thm3}, the exponential condition limit of $C_2$ can be obtained from the trace of $\tilde{f}(s,u)$ :
	 \begin{align*}
	 		\lim_{\varepsilon\to 0} \mathbb{E}(\exp(cC_2)|\Vert B^H\Vert_\beta \leq \varepsilon)&=\exp(-c\operatorname{Tr} \tilde{f})\\
	 		&=\exp\left(-\int_0^1 c\tilde{f}(s,s)ds\right)\\&=\exp\left(-\int_0^1 \frac{c}{2} f(s,s)ds\right)\\
	 		&= \exp\left(-\frac{cd_H}{2}\int_0^1 b_x(\phi_{s},\mathcal{L}_{X_{s}}) \, ds\right) .
	 \end{align*}
	 
	It only remains to study the behaviour of the term \(C_3\). For any \(c \in \mathbb{R}\), we define:  
\[
M_t = c \int_0^t s^{-\alpha} I_{0^+}^\alpha s^\alpha R_s \, dW_s.
\]
	 Then $M_t$ is a martingale. 
	Using \eqref{Rs}, we obtain the following estimate:
	\begin{align*}
		\langle M \rangle_t &= c^2 \int_0^t (s^{-\alpha} I_{0^+}^\alpha s^\alpha R_s)^2 \, ds \\
		&\leq c^2 \int_0^1 (s^{-\alpha}\frac{1}{\Gamma(\alpha)}\int_0^s (s-r)^{\alpha-1}r^\alpha R_rdr )^2ds\\
		&\leq \frac{c^2}{\Gamma(\alpha)^2}k^2\varepsilon^4 \int_0 ^1 \frac{s^\alpha}{\alpha}ds \\
		&\leq k' \varepsilon^4,
		\end{align*}
	where $k'=\frac{c^2k^2}{\Gamma(\alpha)^2\alpha(\alpha+1)}$.

 		Applying the  exponential inequality for martingales, we have 
 		\begin{equation*}
 			P\left(\left|\int_0^1 s^{\alpha}D_{0^+}^\alpha s^{-\alpha} R_sdW_s\right|> \xi, \Vert B^H\Vert _\beta \leq \varepsilon\right)
 	\leq \exp\left(-\frac{\xi^2}{2k'\varepsilon^4}\right).
	\end{equation*}
 	Combining with the small-ball behavior of fractional Brownian motion under the Hölder norm \eqref{sbb holder}, we obtain:
\begin{equation}\label{p}
P\left( \left| \int_0^1 s^{\alpha} D_{0^+}^\alpha s^{-\alpha} R_s dW_s \right| > \xi \mid \Vert B^H \Vert_\beta \leq \varepsilon \right)
\leq \exp\left( -\frac{\xi^2}{2k' \varepsilon^4} \right) \exp\left( K_2 \varepsilon^{\frac{-1}{H-\beta}} \right).
\end{equation}

			Under the condition $\Vert B^H\Vert \leq \varepsilon$, let $p_{\varepsilon,t}$ be the measure on $\mathbb{R}$ induced by $M_t$, and let $F_{\varepsilon,t}$ be the distribution function of $M_t$. 	
	Hence, for any $\delta>0,$ by \eqref{p2} we have 
		\begin{align*}
			&\mathbb{E}(\exp(cC_3)\mid\Vert B^H\Vert_\beta\leq\varepsilon)\\
			=&\mathbb{E}(\exp(M_t)\mid\Vert B^H\Vert_\beta\leq\varepsilon)\\
 		=&\int_{-\infty}^\infty e^\xi p_{\varepsilon,t}(d\xi)\\
 		=&\int_{-\infty}^{\delta} e^\xi p_{\varepsilon,t}(d\xi) +\int_{\delta}^{+\infty} e^\xi  p_{\varepsilon,t}(d\xi)        \\
 		\leq & e^{\delta}- e^\xi(1-F(\xi))\Big|^\infty_\delta 
 		+\int_{\delta}^{+\infty} e^\xi (1-F(\xi)) d\xi \\
 		  \leq & e^{\delta}+\int_{\delta}^{+\infty} e^\xi P\left(\left|c\int_0^1s^{-\alpha}I_{0+}^\alpha s^\alpha R_sdW_s\right|>\xi\mid\Vert B^H\Vert_\beta \leq \varepsilon\right) d\xi\\
 		 &+e^{\delta}P\left(\left|c\int_0^1s^{-\alpha}I_{0+}^\alpha s^\alpha R_sdW_s\right|>\delta \mid\Vert B^H\Vert_\beta \leq \varepsilon\right)\\
 		 \leq & e^\delta +e^\delta   \exp\left(-\frac{\delta^2}{2k'\varepsilon^4}\right)\exp(K_2\varepsilon^\frac{-1} {H-\beta})+ \int_{\delta}^{+\infty} e^\xi\exp\left(-\frac{\xi^2}{2k'\varepsilon^4}\right)\exp(K_2\varepsilon^\frac{-1} {H-\beta})d\xi \\
			\leq & e^\delta +e^\delta   \exp\left(-\frac{\delta^2}{2k'\varepsilon^4}\right)\exp(K_2\varepsilon^\frac{-1} {H-\beta})+\exp\left(\frac{k'\varepsilon^4}{2}+K_2\varepsilon^{-\frac{1}{H-\beta}}\right) \int_{\frac{\delta}{\sqrt{2k'}\varepsilon^2}-\frac{\sqrt{2k'}\varepsilon^2}{2}}^{+\infty} e^{-x^2}dx\\
			\leq & e^\delta +e^\delta   \exp\left(-\frac{\delta^2}{2k'\varepsilon^4}\right)\exp(K_2\varepsilon^\frac{-1} {H-\beta})\\
			&+\sqrt{2k'}\frac{\pi}{8}\exp\left(\frac{k'\varepsilon^4}{2}+K_2\varepsilon^{-\frac{1}{H-\beta}}-\left({\frac{\delta}{\sqrt{2k'}\varepsilon^2}-\frac{\sqrt{2k'}\varepsilon^2}{2}}\right)^2\right)\varepsilon^2,
		\end{align*}
	 when $\beta< H-\frac{1}{4}$.
	 
	Thus, when $\beta< H-\frac{1}{4}$, we have
		\begin{align*}
			\lim_{\varepsilon\to 0} \mathbb{E}(\exp(cC_3)\mid\Vert B^H\Vert_\beta\leq\varepsilon)  \leq e^\delta.
		\end{align*}
	By the arbitrariness of $\delta$, it can be concluded that
	\begin{equation*}
		\lim_{\varepsilon\to 0} \mathbb{E}(\exp(cC_3)\mid \Vert B^H\Vert_\beta\leq\varepsilon)\leq 1, \quad \forall c\in \mathbb{R}.
	\end{equation*}
	Hence
	\begin{align}
	\lim_{\varepsilon\to 0} \mathbb{E}(\exp(A_1)\mid\Vert B^H\Vert_\beta\leq\varepsilon)
	&= \lim_{\varepsilon\to 0} \mathbb{E}(\exp(C_1+C_2+C_3)\mid\Vert B^H\Vert_\beta\leq\varepsilon) \notag \\
	&= \exp\left(-\frac{cd_H}{2}\int_0^1 b_x(\phi_{s},\mathcal{L}_{X_{s}}) \, ds\right). \label{as1}
\end{align}

		In summary, based on \eqref{as1}, \eqref{as2}, \eqref{as3}, and \eqref{as4}, and by further applying Theorem \ref{lem8}, we can derive our result:
\begin{align*}
\lim_{\varepsilon\to 0} \frac{P(\Vert X - \phi \Vert_\beta \leq \varepsilon)}{P(\Vert B^H \Vert_\beta \leq \varepsilon)} 
&=\lim_{\varepsilon\to 0} \mathbb{E}\Big( \exp(A_1 + A_2 + A_3 + A_4) \mid {\Vert B^H \Vert_{\beta} \leq \varepsilon} \Big) \\
&\quad \cdot \exp\Bigg( -\frac{1}{2} \int_0^1 \left( \dot{\phi}_s - s^{-\alpha} I_{0^+}^\alpha s^\alpha b(\phi_s, \mathcal{L}_{X_s}) \right)^2 \, ds \Bigg) \\
&= \exp\left( -\frac{1}{2}\int_0^1  ( \dot{\phi}_s - s^{-\alpha} I_{0^+}^\alpha s^\alpha b(\phi_s, \mathcal{L}_{X_s}) )^2 + d_H b_x(\phi_s, \mathcal{L}_{X_s})  ds \right)\\
&=\exp(J(\phi,\dot{\phi})).
\end{align*}
Finally, we obtain 
\begin{equation*}
	J(\phi,\dot{\phi})=-\frac{1}{2}
			\int_0^1  (\dot{\phi}_s-s^{-\alpha} I_{0^+}^\alpha s^\alpha b(\phi_s,\mathcal{L}_{X_s} ))^2+d_H b_x(\phi_s,\mathcal{L}_{X_s})ds,
\end{equation*}
where 
	\begin{equation*}
		d_H=\left(\frac{2H\Gamma(\frac{3}{2}-H)\Gamma(H+\frac{1}{2})}{\Gamma(2-2H)}\right)^\frac{1}{2}.
	\end{equation*}

The proof for the supremum norm is essentially the same; we only need to replace \eqref{sbb holder} with \eqref{sup} in the proof of \eqref{p}.

	\end{proof}

\subsection{Regular case}
In this section we will compute the Onsager-Machlup functional for $ H>\frac{1}{2}.$

	\begin{theorem}\label{recase}
		Let $X$ be the solution of  equation \eqref{fir}. Suppose that $\phi$ is a function such that $\phi-x \in \mathcal{H}^2 $, and let $b$ satisfy assumption \textnormal{(\textbf{Hyp 2})} and $H>\frac{1}{2}.$ Then the Onsager-Machlup functional of $X$ for the 
		norm $\Vert \cdot\Vert _\beta$ with $H-\frac{1}{2}<\beta<H-\frac{1}{4}$ 
		can be expressed as follows:
		\begin{equation}
			J(\phi,\dot{\phi})=-\frac{1}{2} \int_0^1 (\dot{\phi}_s-s^{\alpha}D_{0^+}^\alpha s^{-\alpha}b(\phi_s,\mathcal{L}_{X_s} ))^2+d_H b_x(\phi_s,\mathcal{L}_{X_s})ds,\label{omr}
		\end{equation}    
	where 
	\begin{equation*}
		d_H=\left(\frac{2H\Gamma(\frac{3}{2}-H)\Gamma(H+\frac{1}{2})}{\Gamma(2-2H)}\right)^\frac{1}{2},
	\end{equation*}
	$\alpha=H-\frac{1}{2}$ and $K^H\dot{\phi}=\phi-x.$
	\end{theorem}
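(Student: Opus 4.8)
The proof will run in close parallel to that of Theorem \ref{sincase}, the two genuinely new features being that in the regular regime $(K^H)^{-1}$ is the fractional \emph{derivative} operator $h\mapsto s^\alpha D^\alpha_{0^+}s^{-\alpha}h'$ with $\alpha=H-\frac12$ — hence unbounded, which is exactly why the supremum norm is no longer available and every term must be controlled through the Weyl representation \eqref{Weil} — and that the Taylor expansion of $b$ must now be carried to second order, which is what assumption (\textbf{Hyp 2}) supplies. Starting from the Girsanov identity \eqref{Gir} and inserting \eqref{g1/2} (legitimate because $\int_0^\cdot b(B^H_r+\phi_r,\mathcal{L}_{X_r})\,dr$ is differentiable), one factors out the deterministic prefactor $\exp\big(-\tfrac12\int_0^1(\dot\phi_s-s^\alpha D^\alpha_{0^+}s^{-\alpha}b(\phi_s,\mathcal{L}_{X_s}))^2\,ds\big)$ and is left with $\mathbb{E}\big(\exp(A_1+A_2+A_3+A_4)\mid\Vert B^H\Vert_\beta\le\varepsilon\big)$, where $A_1,A_2,A_3,A_4$ are defined verbatim as in Theorem \ref{sincase} but with $s^{-\alpha}I^\alpha_{0^+}s^\alpha$ replaced throughout by $s^\alpha D^\alpha_{0^+}s^{-\alpha}$. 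The goal is to show this conditional expectation tends to $\exp\big(-\tfrac{d_H}{2}\int_0^1 b_x(\phi_s,\mathcal{L}_{X_s})\,ds\big)$, which gives \eqref{omr}.

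The ``error'' terms $A_2,A_3,A_4$ are disposed of as before. Theorem \ref{thm2} kills $A_2=-\int_0^1\dot\phi_s\,dW_s$ since $\dot\phi\in L^p\subset L^2$. For $A_3$ and $A_4$ one checks that, on $\{\Vert B^H\Vert_\beta\le\varepsilon\}$, the function $g_\varepsilon(s):=b(B^H_s+\phi_s,\mathcal{L}_{X_s})-b(\phi_s,\mathcal{L}_{X_s})$ has supremum norm and $\beta$-Hölder seminorm both $O(\varepsilon)$ (using boundedness and Lipschitz continuity of $b$ and $b_x$, the Hölder continuity of $\phi$, of $B^H$ under the constraint, and of $s\mapsto\mathcal{L}_{X_s}$); feeding this into \eqref{Weil} and using $\beta>\alpha$ yields $\sup_s|s^\alpha D^\alpha_{0^+}s^{-\alpha}g_\varepsilon(s)|=O(\varepsilon)$, whence $|A_4|\le C\varepsilon\Vert\dot\phi\Vert_{L^p}$ and, since $b(B^H_\cdot+\phi_\cdot,\mathcal{L}_{X_\cdot})+b(\phi_\cdot,\mathcal{L}_{X_\cdot})$ is bounded and $\beta$-Hölder with $\beta>\alpha$, also $|A_3|\le C\varepsilon$. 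Hence $\lim_{\varepsilon\to0}\mathbb{E}(\exp(cA_i)\mid\Vert B^H\Vert_\beta\le\varepsilon)=1$ for $i=2,3,4$ and every $c\in\mathbb{R}$.

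The substance lies in $A_1$. Using (\textbf{Hyp 2}) I expand $b(B^H_s+\phi_s,\mathcal{L}_{X_s})=b(\phi_s,\mathcal{L}_{X_s})+b_x(\phi_s,\mathcal{L}_{X_s})B^H_s+\tfrac12 b_{xx}(\phi_s,\mathcal{L}_{X_s})(B^H_s)^2+\rho_s$, where the second-order remainder satisfies $|\rho_s|\le\tfrac{L_3}{6}|B^H_s|^3$ and, on $\{\Vert B^H\Vert_\beta\le\varepsilon\}$, has $\beta$-Hölder seminorm $O(\varepsilon^2)$; this splits $A_1=C_1+C_2+C_3$, the quadratic and remainder contributions being absorbed into $C_3$. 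Then: $C_1=\int_0^1 s^\alpha D^\alpha_{0^+}s^{-\alpha}b(\phi_s,\mathcal{L}_{X_s})\,dW_s=\int_0^1(K^H)^{-1}\big(\int_0^\cdot b(\phi_r,\mathcal{L}_{X_r})\,dr\big)(s)\,dW_s$ is handled by Theorem \ref{thm2}. For $C_3$, the estimates above give $\sup_s|s^\alpha D^\alpha_{0^+}s^{-\alpha}(\tfrac12 b_{xx}(\phi_s,\mathcal{L}_{X_s})(B^H_s)^2+\rho_s)|=O(\varepsilon^2)$, so the associated martingale has quadratic variation $O(\varepsilon^4)$, and combining Theorem \ref{exponential ineq} with the small-ball bound \eqref{sbb holder} and the distribution-function argument from Theorem \ref{sincase} gives $\limsup_{\varepsilon\to0}\mathbb{E}(\exp(cC_3)\mid\Vert B^H\Vert_\beta\le\varepsilon)\le1$ — this is where the restriction $\beta<H-\tfrac14$ is used. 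Finally, for $C_2=\int_0^1 s^\alpha D^\alpha_{0^+}s^{-\alpha}(b_x(\phi_s,\mathcal{L}_{X_s})B^H_s)\,dW_s$, I substitute $B^H_s=\int_0^s K^H(s,u)\,dW_u$, use \eqref{Weil} and the stochastic Fubini theorem (Theorem \ref{fubini}) to rewrite $C_2=\int_0^1\int_0^s f(s,u)\,dW_u\,dW_s$, symmetrize to a continuous symmetric kernel $\tilde f$, check that $K(\tilde f)$ is nuclear, and invoke Theorem \ref{thm3}; the crucial computation is that $f$ extends continuously to the diagonal with $f(s,s)=d_H b_x(\phi_s,\mathcal{L}_{X_s})$, so $\operatorname{Tr}\tilde f=\tfrac12\int_0^1 f(s,s)\,ds$ and $\lim_{\varepsilon\to0}\mathbb{E}(\exp(cC_2)\mid\Vert B^H\Vert_\beta\le\varepsilon)=\exp\big(-\tfrac{cd_H}{2}\int_0^1 b_x(\phi_s,\mathcal{L}_{X_s})\,ds\big)$. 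Collecting $C_1,C_2,C_3$ through a Hölder-inequality sandwich together with Theorem \ref{lem8}, and multiplying back the deterministic prefactor, yields \eqref{omr}.

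I expect the main obstacle to be the analysis of $C_2$: since $(K^H)^{-1}$ is now a fractional derivative, substituting the explicit kernel \eqref{kh} into the Weyl representation produces a kernel $f(s,u)$ with a genuine singularity along $\{s=u\}$, so one must split it into pieces mimicking the $D_1,D_2$ decomposition of Theorem \ref{sincase}, carry out the relevant changes of variables, and perform a delicate passage to the limit $u\to s$ both to secure continuity and to pin down the constant $d_H$ — while also verifying enough integrability to justify the stochastic Fubini step and the nuclearity of $K(\tilde f)$ in the presence of this singularity. A secondary but structural subtlety is the necessity of a \emph{second}-order expansion: a first-order one would leave a remainder whose $\beta$-Hölder seminorm is only $O(\varepsilon)$, giving quadratic variation $O(\varepsilon^2)$ and hence control of $C_3$ merely for $\beta<H-\tfrac12$; peeling off the quadratic term (which is itself $O(\varepsilon^2)$ in the relevant norm, so contributes nothing in the limit) is precisely what is needed to reach the full range $\beta<H-\tfrac14$.
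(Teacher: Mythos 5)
Your proposal is correct and follows essentially the same route as the paper: the same Girsanov factorization into $A_1,\dots,A_4$, the same Weyl-representation estimates for $A_3,A_4$ (using $\beta>\alpha$), the same double-stochastic-integral/trace treatment of $C_2$ with $f(s,s)=d_Hb_x(\phi_s,\mathcal{L}_{X_s})$, and the same exponential-martingale/small-ball argument for $C_3$ forcing $\beta<H-\frac14$. Your ``second-order expansion'' is only a bookkeeping variant of the paper's first-order expansion whose remainder $R_s=\tfrac12 b_{xx}(\phi_s,\mathcal{L}_{X_s})(B^H_s)^2+\rho_s$ is controlled via $b_{xx}$, so the two $C_3$'s coincide; note only that your closing claim that a first-order remainder has $\beta$-Hölder seminorm merely $O(\varepsilon)$ is inaccurate (under (\textbf{Hyp 2}) it is already $O(\varepsilon^2)$, which is exactly what the paper exploits), though this does not affect the validity of your argument.
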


\begin{proof}
				 Using \eqref{g1/2} and \eqref{Gir}, we have
\begin{align*}
P(\Vert X-\phi\Vert_\beta \leq \varepsilon)
&=\mathbb{E}\Big(\exp\Big[ -\int_0^1 (K^H)^{-1} \left( \int_0^s u_r  dr \right) dW_s \\
 &\qquad\qquad\quad- \frac{1}{2} \int_0^1 \left( (K^H)^{-1} \left( \int_0^s u_r  dr \right) \right)^2 ds\Big] I_{\Vert B^H\Vert_\beta \leq \varepsilon}\Big) \\
&= \mathbb{E}\Bigg(\exp\Bigg[\int_0^1 \left(s^{\alpha}D_{0^+}^\alpha s^{-\alpha} b(B^H_s+\phi_s, \mathcal{L}_{X_s}) - \dot{\phi}_s\right) dW_s \\
& \qquad- \frac{1}{2}\int_0^1 \left(s^{\alpha}D_{0^+}^\alpha s^{-\alpha} b(B^H_s+\phi_s, \mathcal{L}_{X_s}) - \dot{\phi}_s\right)^2 ds\Bigg] I_{\Vert B^H\Vert_\beta \leq \varepsilon}\Bigg)  \\
&=\mathbb{E}\left(\exp(A_1 + A_2 + A_3 + A_4) I_{\Vert B^H\Vert \leq \varepsilon}\right) \\
&\quad \cdot \exp\left(-\frac{1}{2}\int_0^1 \left(\dot{\phi}_s - s^{\alpha}D_{0^+}^\alpha s^{-\alpha} b(\phi_s, \mathcal{L}_{X_s})\right)^2 ds\right),
\end{align*}
where
	\begin{align*}
		A_1 &= \int_0^1 s^{\alpha}D_{0^+}^\alpha s^{-\alpha} b(B^H_s+\phi_s, \mathcal{L}_{X_s}) dW_s,\\
		A_2 &= -\int_0^1 \dot{\phi}_s dW_s,\\
		A_3 &= -\frac{1}{2}\int_0^1  (s^{\alpha}D_{0^+}^\alpha s^{-\alpha} b(B^H_s+\phi_s, \mathcal{L}_{X_s}))^2-(s^{\alpha}D_{0^+}^\alpha s^{-\alpha} b(\phi_s, \mathcal{L}_{X_s}))^2  ds,\\
	A_4 &= \int_0^1 \dot{\phi}_s s^{\alpha}D_{0^+}^\alpha s^{-\alpha} \left(b(B^H_s+\phi_s, \mathcal{L}_{X_s}) - b(\phi_s, \mathcal{L}_{X_s})\right) ds.
	\end{align*}

	\MakeUppercase{\romannumeral 1}.Term $A_2$:
	
	For any $c\in \mathbb{R},$ using Theorem \ref{thm2}, we obtain
	\begin{equation}\label{ar2}
		\lim_{\varepsilon\to 0}\mathbb{E}(\exp(cA_2) \mid \Vert B^H\Vert_\beta \leq \varepsilon)=1.
	\end{equation}
		
	\MakeUppercase{\romannumeral 2}.Term $A_4$:
	
	Applying \eqref{Weil}, we have 
		\begin{align*}
 		&s^{\alpha}D_{0^+}^\alpha s^{-\alpha} (b(B^H_s+\phi_s,\mathcal{L}_{X_s} )-b(\phi_s,\mathcal{L}_{X_s} ))\\
 		= &\frac{s^\alpha}{\Gamma(1-\alpha)}(\frac{b(B^H_s+\phi_s,\mathcal{L}_{X_s} )-b(\phi_s,\mathcal{L}_{X
 		_s} )}{s^{2\alpha}}\\
 		& +\alpha \int_0^s \frac{s^{-\alpha}(b(B^H_s+\phi_s,\mathcal{L}_{X_s} )-b(\phi_s,\mathcal{L}_{X_s} ))
 		}
 		{(s-r)^{\alpha+1}}  \\
 		&\quad\quad\quad-\frac{r^{-\alpha}(b(B^H_r+\phi_r,\mathcal{L}_{X_r} )
 		-b(\phi_r,\mathcal{L}_{X_r} ))}{(s-r)^{\alpha+1}}
 		dr) \\ 
 		=& 		 B_1+B_2+B_3,
 	\end{align*}
	where
	\begin{align*}
		B_1 &= \frac{b(B^H_s+\phi_s,\mathcal{L}_{X_s} )-b(\phi_s,\mathcal{L}_{X_s} )}{\Gamma(1-\alpha)s^\alpha},\\
		B_2 &= \frac{\alpha}{\Gamma(1-\alpha)} 
 	\int_0^s \frac{b(B^H_s+\phi_s,\mathcal{L}_{X_s} )-b(\phi_s,\mathcal{L}_{X_s} )}
 	{(s-r)^{\alpha+1}}\\
 	&\quad\qquad\qquad\quad-\frac{b(B^H_r+\phi_r,\mathcal{L}_{X_r} )-b(\phi_r,\mathcal{L}_{X_r} )}{(s-r)^{\alpha+1}} dr, \\
	B_3 &= \frac{\alpha s^\alpha}{\Gamma(1-\alpha)} 
 	\int_0^s \frac{s^{-\alpha}-r^{-\alpha}}{(s-r)^{\alpha+1}}(b(B^H_r+\phi_r,\mathcal{L}_{X_r} )-b(\phi_r,\mathcal{L}_{X_r} ))dr.
	\end{align*}
	Using that $b$ is Lipschitz continuous with a constant $L_1$ and  $\beta>\alpha$, we have
	\begin{align}
	\notag	|B_1| &\leq	\left|\frac{b(B^H_s+\phi_s,\mathcal{L}_{X_s} )-b(\phi_s,\mathcal{L}_{X_s} )}{\Gamma(1-\alpha)s^\alpha}\right|\\
		&\leq \frac{L_1}{\Gamma(1-\alpha)}\varepsilon.\label{B_1'}
	\end{align}
	 	Using the fact that \(b_x\) is Lipschitz continuous with a constant \(L_2\) and bounded, and that \(\phi\) is \(H\)-Hölder continuous, it follows that
		\begin{align*}
    &|b(B^H_s+\phi_s,\mathcal{L}_{X_s}) - b(\phi_s,\mathcal{L}_{X_s}) - 
    \big(b(B^H_r+\phi_r,\mathcal{L}_{X_r}) - b(\phi_r,\mathcal{L}_{X_r})\big)| \\
    \leq &|\int_0^1 b_x(\lambda B^H_s+\phi_s,\mathcal{L}_{X_s})B^H_s-b_x(\lambda B^H_r+\phi_r,\mathcal{L}_{X_r})B^H_r  d\lambda |
     \\
    \leq & |\int_0^1 b_x(\lambda B^H_s+\phi_s,\mathcal{L}_{X_s})(B^H_s - B^H_r)\\
  &\quad +  \big(b_x(\lambda B^H_s+\phi_s,\mathcal{L}_{X_s}) - 
    b_x(\lambda B^H_r+\phi_r,\mathcal{L}_{X_r})\big)B^H_s d\lambda|\\
    \leq & \big(\Vert b_x\Vert_\infty  |s-r|^\beta+L_2(|s-r|^\beta+\Vert \phi\Vert |s-r|^H+2\Vert b\Vert_\infty|s-r|+2|s-r|^H )\big) \varepsilon.
\end{align*}
 Therefore
\begin{align}
   \notag |B_2| &\leq \frac{\alpha}{\Gamma(1-\alpha)} \Big| \int_0^s \frac{1}{(s-r)^{\alpha+1}} \big( b(B^H_s+\phi_s,\mathcal{L}_{X_s}) - 
    b(\phi_s,\mathcal{L}_{X_s})  \\
   \notag &\quad 
  \quad\quad - \big(b(B^H_r+\phi_r,\mathcal{L}_{X_r}) - 
    b(\phi_r,\mathcal{L}_{X_r})\big) \big) dr \Big| \\
      &\leq C' \varepsilon,\label{B_2'}
\end{align}
where
\begin{equation*}
	C'=\frac{\alpha}{\Gamma(1-\alpha)}\left(\frac{\Vert b_x\Vert_\infty }{\beta-\alpha}+L_2\left(\frac{1}{\beta-\alpha}+2\Vert \phi\Vert_H+\frac{2}{1-\alpha}\Vert b\Vert_\infty+4 \right)   \right).
\end{equation*}
 	From the integral  
	\begin{equation}
		\int_0^s \frac{s^{-\alpha}-r^{-\alpha}}{(s-r)^{\alpha+1}}dr = c_\alpha s^{-2\alpha},\label{int}
	\end{equation}
where \( c_\alpha \) is a constant that depends on \(\alpha\), we thus deduce 
	\begin{align}
   \notag |B_3| &\leq \left| \frac{\alpha s^\alpha}{\Gamma(1-\alpha)} 
    \int_0^s \frac{s^{-\alpha}-r^{-\alpha}}{(s-r)^{\alpha+1}} \big(b(B^H_r+\phi_r,\mathcal{L}_{X_r}) - b(\phi_r,\mathcal{L}_{X_r})\big) dr \right| \\ 
  \notag  &\leq \frac{\alpha s^\alpha}{\Gamma(1-\alpha)} 
    \int_0^s \frac{s^{-\alpha}-r^{-\alpha}}{(s-r)^{\alpha+1}} |B^H_r| dr \\ 
    &\leq \frac{c_\alpha M \alpha s^{-\alpha+\beta}}{\Gamma(1-\alpha)} 
    \Vert B^H \Vert_\beta \\
    &\leq \frac{c_\alpha M \alpha}{\Gamma(1-\alpha)} \varepsilon. \label{B_3'}
\end{align}
According to \eqref{B_1'}, \eqref{B_2'} and \eqref{B_3'}, we have 
	\begin{equation}
		|s^{\alpha}D_{0^+}^\alpha s^{-\alpha} (b(B^H_s+\phi_s,\mathcal{L}_{X_s} )-b(\phi_s,\mathcal{L}_{X_s} ))|=|B_1+B_2+B_3|\leq c_B \varepsilon,\label{cb}
	\end{equation}
where $c_B=\frac{L_1}{\Gamma(1-\alpha)}+\frac{c_\alpha M \alpha}{\Gamma(1-\alpha)}+C'.$

Thus, we obtain that
\begin{equation}
	\lim_{\varepsilon \to 0} \mathbb{E}\big( \exp(A_4) \mid \Vert B^H \Vert_\beta \leq \varepsilon \big)=1.\label{ar4}
\end{equation}

	\MakeUppercase{\romannumeral 3}.Term $A_3$ 
	
	We first derive an estimate similar to \eqref{cb} under the condition $\Vert B^H\Vert_\beta\leq \varepsilon$:
	\begin{align}
	\notag	&|s^{\alpha}D_{0^+}^\alpha s^{-\alpha} (b(B^H_s+\phi_s,\mathcal{L}_{X_s} )+b(\phi_s,\mathcal{L}_{X_s} ))|\\
	\notag	= \Big|&\frac{s^\alpha}{\Gamma(1-\alpha)}
	\Bigg(\frac{b(B^H_s+\phi_s,\mathcal{L}_{X_s} )+b(\phi_s,\mathcal{L}_{X
 		_s} )}{s^{2\alpha}}\\
 	\notag	& \qquad\qquad\quad +\alpha \int_0^s \frac{s^{-\alpha}(b(B^H_s+\phi_s,\mathcal{L}_{X_s} )+b(\phi_s,\mathcal{L}_{X_s} ))
 		}
 		{(s-r)^{\alpha+1}}  \\
 	\notag	&\qquad\quad\qquad\qquad\quad-\frac{r^{-\alpha}(b(B^H_r+\phi_r,\mathcal{L}_{X_r} )
 		+b(\phi_r,\mathcal{L}_{X_r} ))}{(s-r)^{\alpha+1}}
 		dr\Bigg) \Big|\\
 		\leq & \frac{2\Vert b\Vert_\infty (1+\alpha c_\alpha)}{\Gamma(1-\alpha)s^\alpha}+\frac{\alpha L_1}{\Gamma(1-\alpha)}\left(\frac{1}{\beta-\alpha}+4\Vert \phi\Vert_H+4\frac{\Vert b\Vert_\infty}{1-\alpha}+4 \right). \label{esa3}
	\end{align}

Using $\eqref{cb}$ and $\eqref{esa3}$	, we can obtain the estimate for $A_3$:
\begin{align*}
|A_3| &\leq \frac{1}{2} \int_0^1 |s^{\alpha} D_{0^+}^\alpha s^{-\alpha} b(B^H_s + \phi_s, \mathcal{L}_{X_s}) 
-  s^{\alpha} D_{0^+}^\alpha s^{-\alpha} b(\phi_s, \mathcal{L}_{X_s})|  \\
   &\quad \quad\quad\cdot |s^{\alpha} D_{0^+}^\alpha s^{-\alpha} b(B^H_s + \phi_s, \mathcal{L}_{X_s}) 
+  s^{\alpha} D_{0^+}^\alpha s^{-\alpha} b(\phi_s, \mathcal{L}_{X_s})| ds\\
&\leq \frac{c_B\Vert b\Vert_\infty (1+\alpha c_\alpha)\varepsilon}{\Gamma(1-\alpha)\alpha}+\frac{\alpha L_1c_B\varepsilon}{2\Gamma(1-\alpha)}\left(\frac{1}{\beta-\alpha}+4\Vert \phi\Vert_H+4\frac{\Vert b\Vert_\infty}{1-\alpha}+4 \right).
\end{align*}
 	Therefore,
 	\begin{equation}
 		\lim_{\varepsilon \to 0} \mathbb{E}\big(\exp(A_3)\mid \Vert B^H \Vert_\beta \leq \varepsilon\big)=1. \label{ar3}
 	\end{equation}

	\MakeUppercase{\romannumeral 4}.Term $A_1$	
	
	Based on the assumption that \( b \) satisfies Hypothesis (\textbf{Hyp 2}), we can expand \( b(B^H_s + \phi_s, \mathcal{L}_{X_s}) \) as follows:
\[
b(B^H_s + \phi_s, \mathcal{L}_{X_s}) = b(\phi_s, \mathcal{L}_{X_s}) + b_x(\phi_s, \mathcal{L}_{X_s}) B^H_s + R_s,
\]
where the remainder term \( R_s \) satisfies
\begin{equation}\label{Rs}
	\sup_{0 \leq s \leq 1} |R_s| \leq \bar{k} \varepsilon^2,
\end{equation}
and \( \bar{k} = \| b_{xx} \|_\infty \), provided that \( \| B^H \|_\beta \leq \varepsilon \).

Hence, we can decompose \( A_1 \) into three parts:
\begin{align*}
	A_1 &= \int_0^1 s^{\alpha} D_{0^+}^\alpha s^{-\alpha} b(B^H_s+\phi_s,\mathcal{L}_{X_s}) \, dW_s \\
	&= \int_0^1 s^{\alpha} D_{0^+}^\alpha s^{-\alpha} \big(b(\phi_s,\mathcal{L}_{X_s}) + b_x(\phi_s,\mathcal{L}_{X_s})B^H_s + R_s\big) \, dW_s \\
	&= C_1 + C_2 + C_3,
\end{align*}
where
\begin{align*}
	C_1 &= \int_0^1 s^{\alpha} D_{0^+}^\alpha s^{-\alpha} b(\phi_s,\mathcal{L}_{X_s}) \, dW_s,\\
	C_2 &= \int_0^1 s^{\alpha} D_{0^+}^\alpha s^{-\alpha} \big(b_x(\phi_s,\mathcal{L}_{X_s}) B^H_s\big) \, dW_s,\\
	C_3 &= \int_0^1 s^{\alpha} D_{0^+}^\alpha s^{-\alpha} R_s \, dW_s.
\end{align*}
Applying the	representation \eqref{Weil}, we obtain
		\begin{align*}
		\Gamma(1-\alpha)C_2 &=\Gamma(1-\alpha) \int_0^1 s^{\alpha}D_{0^+}^\alpha s^{-\alpha} (b_x(\phi_s,\mathcal{L}_{X_s} )B^H_s) dW_s\\
		& =\int_0^1 (\alpha s^{-\alpha} b_x(\phi_s,\mathcal{L}_{X_s} )B^H_s \\
		&\quad+s^\alpha \alpha
		\int_0^s \frac{ s^{-\alpha} b_x(\phi_s,\mathcal{L}_{X_s})
		 -r^{-\alpha} b_x(\phi_r,\mathcal{L}_{X_r})
		 B^H_s}{(s-r)^{\alpha+1}})dW_s\\
		&=\int_0^1  s^{-\alpha}b_x(\phi_s,\mathcal{L}_{X_s})\int_0^s K^H(s,u)dW_udW_s\\
		&\quad+\int_0^1  \alpha s^{\alpha}
		\int_0^s \frac{s^{-\alpha} b_x(\phi_s,\mathcal{L}_{X_s})
		\int_0^s K^H(s,u)dW_u
		}{(s-r)^{\alpha+1}}drdW_s \\
		&\quad-\int_0^1  \alpha s^{\alpha}
		\int_0^s \frac{-r^{-\alpha} b_x(\phi_r,\mathcal{L}_{X_r})
		\int_0^r K^H(r,u)dW_u}{(s-r)^{\alpha+1}}drdW_s.
		\end{align*}
	Simplify into a double integral form: 
	\begin{align*}
		&\int_0^s \frac{s^{-\alpha} b_x(\phi_s,\mathcal{L}_{X_s})
		\int_0^s K^H(s,u)dW_u
		}{(s-r)^{\alpha+1}}dr-\int_0^s \frac{r^{-\alpha} b_x(\phi_r,\mathcal{L}_{X_r})
		\int_0^r K^H(r,u)dW_u}{(s-r)^{\alpha+1}}dr\\
	=&\int_0^s \int_0^s \frac{s^{-\alpha} b_x(\phi_s,\mathcal{L}_{X_s})K^H(s,u)-r^{-\alpha} b_x(\phi_r,\mathcal{L}_{X_r})
		 K^H(r,u)I_{u<r}}{(s-r)^{\alpha+1}}	dW_udr.
	\end{align*}
	Using \eqref{kh1/2}, we obtain the following estimates:
\begin{align}
	\notag &\int_0^s \left( \int_0^r \left( s^{-\alpha} b_x(\phi_s, \mathcal{L}_{X_s}) K^H(s,u) - s^{-\alpha} b_x(\phi_s, \mathcal{L}_{X_s}) K^H(r,u) \right)^2 du \right)^{\frac{1}{2}} \\
	\notag &\quad \quad\quad \cdot\frac{1}{(s - r)^{\alpha + 1}} dr \\
	\notag &\leq \int_0^s s^{-\alpha} \Vert b_x \Vert_\infty \left( \int_0^r \left( K^H(s,u) - K^H(r,u) \right)^2 du \right)^{\frac{1}{2}} \frac{1}{(s - r)^{\alpha + 1}} dr \\
	\notag &\leq \int_0^s s^{-\alpha} \Vert b_x \Vert_\infty c_H \alpha \left( \int_0^r u^{-2\alpha} \left( \int_u^s (\theta - u)^{\alpha - 1} \theta^\alpha d\theta \right)^2 du \right)^{\frac{1}{2}} \frac{1}{(s - r)^{\alpha + 1}} dr \\
	\notag &\leq \int_0^s \Vert b_x \Vert_\infty c_H \alpha \left( \int_0^r u^{-2\alpha} (r - u)^{2\alpha - 2} du \right)^{\frac{1}{2}} \frac{1}{(s - r)^{\alpha}} dr \\
	\notag &\leq \int_0^s \Vert b_x \Vert_\infty c_H \alpha B(1 - 2\alpha, 2\alpha - 1)^{\frac{1}{2}} r^{-\frac{1}{2}} \frac{1}{(s - r)^{\alpha}} dr \\
	&< \infty, \label{f1}
\end{align}
\begin{align}
	\notag &\int_0^s \left( \int_0^r \left( s^{-\alpha} b_x(\phi_s, \mathcal{L}_{X_s}) K^H(r,u) - r^{-\alpha} b_x(\phi_r, \mathcal{L}_{X_r}) K^H(r,u) \right)^2 du \right)^{\frac{1}{2}} \\
\notag	&\quad\quad\quad \cdot \frac{1}{(s - r)^{\alpha + 1}} dr \\
	\notag &\leq \int_0^s \left| s^{-\alpha} b_x(\phi_s, \mathcal{L}_{X_s}) - r^{-\alpha} b_x(\phi_r, \mathcal{L}_{X_r}) \right| \left( \int_0^r K^H(r,u)^2 du \right)^{\frac{1}{2}} \frac{1}{(s - r)^{\alpha + 1}} dr \\
	\notag &\leq c_H \alpha \int_0^s \left| s^{-\alpha} b_x(\phi_s, \mathcal{L}_{X_s}) - r^{-\alpha} b_x(\phi_r, \mathcal{L}_{X_r}) \right| \\
	\notag &\qquad\quad\quad\cdot \left( \int_0^r u^{-2\alpha} \left( \int_u^s (\theta - u)^{\alpha - 1} \theta^\alpha d\theta \right)^2 du \right)^{\frac{1}{2}} \frac{1}{(s - r)^{\alpha + 1}} dr \\
	\notag &\leq c_H \alpha \int_0^s \left| s^{-\alpha} b_x(\phi_s, \mathcal{L}_{X_s}) - r^{-\alpha} b_x(\phi_r, \mathcal{L}_{X_r}) \right| \\
	\notag &\quad\qquad\quad \cdot \left( \int_0^r u^{-2\alpha} (r - u)^{2\alpha - 1} r^{2\alpha} du \right)^{\frac{1}{2}} \frac{1}{(s - r)^{\alpha + 1}} dr \\
	\notag &\leq c_H \alpha \int_0^s \left| s^{-\alpha} b_x(\phi_s, \mathcal{L}_{X_s}) - r^{-\alpha} b_x(\phi_r, \mathcal{L}_{X_r}) \right| r^\alpha B(1 - 2\alpha, 2\alpha)^{\frac{1}{2}} \frac{1}{(s - r)^{\alpha + 1}} dr \\
	\notag &\leq c_H \alpha B(1 - 2\alpha, 2\alpha)^{\frac{1}{2}} \left( \int_0^s \left| s^{-\alpha} b_x(\phi_s, \mathcal{L}_{X_s}) - r^{-\alpha} b_x(\phi_s, \mathcal{L}_{X_s}) \right| r^\alpha \frac{1}{(s - r)^{\alpha + 1}} dr \right. \\
	\notag &\quad\qquad\qquad\qquad\qquad \left. + \int_0^s \left| r^{-\alpha} b_x(\phi_s, \mathcal{L}_{X_s}) - r^{-\alpha} b_x(\phi_r, \mathcal{L}_{X_r}) \right| r^\alpha \frac{1}{(s - r)^{\alpha + 1}} dr \right) \\
	\notag &\leq c_H \alpha B(1 - 2\alpha, 2\alpha)^{\frac{1}{2}} \left( \Vert b_x \Vert_\infty s^{-\alpha} \int_0^1 (x^\alpha - 1)(1 - x)^{-1 - \alpha} dx \right. \\
	\notag &\quad\qquad\qquad\qquad\qquad\quad + \left. \int_0^s \Vert b_x(\phi_s, \mathcal{L}_{X_s}) \Vert_H \frac{1}{(s - r)^{\frac{1}{2}}} \right) \\
	&< \infty, \label{f2}
\end{align}
and
\begin{align}
	\notag &\int_0^s \left( \int_r^s s^{-2\alpha} b_x(\phi_s, \mathcal{L}_{X_s})^2 K^H(s,u)^2 du \right)^{\frac{1}{2}} \frac{1}{(s - r)^{\alpha + 1}} dr \\
	\notag &\leq \int_0^s s^{-\alpha} \Vert b_x \Vert_\infty \left( \int_r^s K^H(s,u)^2 du \right)^{\frac{1}{2}} \frac{1}{(s - r)^{\alpha + 1}} dr \\
	\notag &\leq \int_0^s \Vert b_x \Vert_\infty c_H \left( \int_r^s u^{-2\alpha} (s - u)^{2\alpha} du \right)^{\frac{1}{2}} \frac{1}{(s - r)^{\alpha + 1}} dr \\
	\notag &\leq \int_0^s \Vert b_x \Vert_\infty c_H r^{-\alpha} (s - r)^{\alpha + 1} \frac{1}{(s - r)^{\alpha + 1}} dr \\
	&< \infty. \label{f3}
\end{align}

	Therefore, from \eqref{f1}, \eqref{f2}, 
	\eqref{f3} and Theorem \ref{fubini}, we obtain that
	\begin{equation*}
	 	C_2=\int_0^1\int_0^s f(s,u)dW_udW_s,
	 \end{equation*}
	 where
	 \begin{align}
	 \notag	f(s,u)=&\frac{1}{\Gamma(1-\alpha)}
	 	(
	 		s^{-\alpha}b_x(\phi_s,\mathcal{L}_{X_s})K^H(s,u)
	 		+\int_0^s \frac{\alpha b_x(\phi_s,\mathcal{L}_{X_s})K^H(s,u)
		}{(s-r)^{\alpha+1}}dr\\
		& +\int_u^s \alpha s^\alpha \frac{r^{-\alpha} b_x(\phi_r,\mathcal{L}_{X_r})K^H(r,u)}{(s-r)^{\alpha+1}}dr).\label{f2'}
	 \end{align}
	 Set    
	 \begin{equation*}
	 	\tilde{f}(s,u)=\frac{f(s,u)I_{u\leq s}+f(u,s)I_{s<u}}{2},
	 \end{equation*}
	where $\tilde{f}(s,u)$ is a continuous and symmetric function on $[0,1]\times [0,1]$ and  
	 \begin{align*}
	 	\int_0^1\int_0^1 \tilde{f}(s,u) dW_u dW_s&=\int_0^1\int_0^s \frac{\tilde{f}(s,u)}{2} dW_u dW_s
	 	+\int_0^1\int_s^1 \frac{\tilde{f}(u,s)}{2} dW_udW_s \\
	 	&=\int_0^1\int_0^s \frac{\tilde{f}(s,u)}{2} dW_u dW_s
		+\int_0^1\int_0^u \frac{\tilde{f}(u,s)}{2} dW_sdW_u\\
		&=C_2.
	 \end{align*}
	Proofing that $K(\tilde{f})$ is nuclear can be referenced in \cite[Lemma 14]{Nualart}.
	 According to Theorem \ref{thm3}, the exponential condition limit of $C_2$ can be obtained from the trace of $\tilde{f}(s,u)$ :
	 \begin{align}
	 	\notag	\lim_{\varepsilon\to 0} \mathbb{E}(\exp(cC_2)\mid\Vert B^H\Vert_\beta \leq \varepsilon)&=\exp\left(-c\operatorname{Tr} \tilde{f}\right)=\exp\left(-\int_0^1 c\tilde{f}(s,s)ds\right)\\&=\exp\left(-\int_0^1 \frac{c}{2} f(s,s)ds\right) .\label{C_2 '}
	 \end{align}
	We now compute the limit of $f(s,u)$ as $u $ approaches $s$.
	Using \eqref{kh1/2}, the first term on the right-hand side of \eqref{f2'} vanishes when $u$ approaches $s$. For the rest term we have 
		\begin{align*}
		&	\frac{1}{\Gamma(1-\alpha)}\left(\int_0^s \frac{\alpha b_x(\phi_s,\mathcal{L}_{X_s})K^H(s,u)
		}{(s-r)^{\alpha+1}}dr
		 +\int_u^s \alpha s^\alpha \frac{r^{-\alpha} b_x(\phi_r,\mathcal{L}_{X_r})K^H(r,u)}{(s-r)^{\alpha+1}}dr\right)\\
		=& \frac{\alpha b_x(\phi_s,\mathcal{L}_{X_s})K^H(s,u)}{\Gamma(1-\alpha)}
		\int_0^u \frac{1
		}{(s-r)^{\alpha+1}}dr \\
		&+\frac{\alpha s^\alpha}{\Gamma(1-\alpha)}
		\int_u^s \frac{s^{-\alpha}b_x(\phi_s,\mathcal{L}_{X_s})K^H(s,u)
		-r^{-\alpha}b_x(\phi_r,\mathcal{L}_{X_r})K^H(r,u)
		}{(s-r)^{\alpha+1}}dr \\
		=&D_1(s,u)+D_2(s,u),
		\end{align*}
	where
	\begin{equation*}
		D_1(s,u)=\frac{\alpha b_x(\phi_s,\mathcal{L}_{X_s})K^H(s,u)}{\Gamma(1-\alpha)}
		\int_0^u \frac{1
		}{(s-r)^{\alpha+1}}dr,
	\end{equation*}
	\begin{equation*}
		D_2(s,u)=\frac{\alpha s^\alpha}{\Gamma(1-\alpha)}
		\int_u^s \frac{s^{-\alpha}b_x(\phi_s,\mathcal{L}_{X_s})K^H(s,u)
		-r^{-\alpha}b_x(\phi_r,\mathcal{L}_{X_r})K^H(r,u)
		}{(s-r)^{\alpha+1}}dr.
	\end{equation*}
	From \eqref{kh1/2}, we have
		\begin{align*}
			D_1(s,u)=&\frac{\alpha b_x(\phi_s,\mathcal{L}_{X_s})K^H(s,u)}{\Gamma(1-\alpha)}
		\int_0^u \frac{1
		}{(s-r)^{\alpha+1}}dr \\
		=& \frac{\alpha c_H b_x(\phi_s,\mathcal{L}_{X_s})}{\Gamma(1-\alpha)}u^{-\alpha}
		((s-u)^{-\alpha}-s^{-\alpha})(s-u)^\alpha
		\int_0^1 v^{\alpha-1}((s-u)v+u)^\alpha dv .
		\end{align*}
	Thus, letting \( u \to s \), we obtain that 
\[
D_1(s,s) = \frac{c_H}{\Gamma(1-\alpha)} b_x(\phi_s, \mathcal{L}_{X_s}).
\]

Next, we divide $D_2$ into three parts:  
\begin{align*}
    D_2(s,u) &= \frac{\alpha s^\alpha}{\Gamma(1-\alpha)}
    \int_u^s \frac{s^{-\alpha} b_x(\phi_s, \mathcal{L}_{X_s}) K^H(s,u)
    - r^{-\alpha} b_x(\phi_r, \mathcal{L}_{X_r}) K^H(r,u)}{(s-r)^{\alpha+1}} \, dr \\
    &= \frac{\alpha s^\alpha}{\Gamma(1-\alpha)} \big(E_1(s,u) + E_2(s,u) + E_3(s,u)\big),
\end{align*}
where
\begin{align*}
	E_1(s,u) &=K^H(s,u) b_x(\phi_s,\mathcal{L}_{X_s})
		\int_u^s \frac{s^{-\alpha}-r^{-\alpha}}{(s-r)^{\alpha+1}}dr,\\
	E_2(s,u) &=K^H(s,u) \int_u^s \frac{b_x(\phi_s,\mathcal{L}_{X_s})
		-b_x(\phi_r,\mathcal{L}_{X_r})
}{(s-r)^{\alpha+1}} r^{-\alpha}dr,\\
	E_3(s,u) &=\int_u^s \frac{K^H(s,u)-K^H(r,u)}{(s-r)^{\alpha+1}}r^{-\alpha}b_x(\phi_r,\mathcal{L}_{X_r})dr.
\end{align*}
	Since 
	\begin{equation*}
		\left|\int_u^s \frac{s^{-\alpha}-r^{-\alpha}}{(s-r)^{\alpha+1}}dr\right|\leq 
		\alpha u^{-\alpha-1}\int_u^s 
		\frac{1}{(s-r)^{\alpha}}dr\leq \frac{\alpha}{1-\alpha}u^{-\alpha-1}(s-u)^{1-\alpha},
	\end{equation*}
	we have 
	\begin{equation*}
		|E_1(s,u)|\leq |K^H(s,u) b_x(\phi_s,\mathcal{L}_{X_s})|\frac{\alpha}{1-\alpha}u^{-\alpha-1}(s-u)^{1-\alpha},
	\end{equation*}
	hence $E_1(s,s)=0.$
	
	Since \(\phi\) is \(H\)-Hölder continuous and \(b_x\) is Lipschitz continuous with constant \(L_2\), we have:  
\begin{align*}
    |E_2(s,u)| &= \left| K^H(s,u) \int_u^s \frac{b_x(\phi_s, \mathcal{L}_{X_s}) - b_x(\phi_r, \mathcal{L}_{X_r})}{(s-r)^{\alpha+1}} r^{-\alpha} \, dr \right| \\
    &\leq \left| K^H(s,u) \int_u^s \frac{L_2 \big( |\phi_s - \phi_r| + 2\Vert b\Vert_\infty|s-r|+2|s-r|^H \big)}{(s-r)^{\alpha+1}} r^{-\alpha} \, dr \right| \\
    &\leq L_2 \big(  \|\phi\|_H +2+2\Vert b\Vert_\infty \big) |K^H(s,u)| u^{-\alpha} \int_u^s \frac{1}{(s-r)^{\frac{1}{2}}} \, dr \\
    &\leq 2 L_2 \big(  \|\phi\|_H +2+2\Vert b\Vert_\infty \big) |K^H(s,u)| u^{-\alpha} (s-u)^{\frac{1}{2}},
\end{align*}
which implies that \(E_2(s, s) = 0\).

	Using the expression $\eqref{kh1/2}$ of the kernel $K^H(t,s)$, we can write  
\begin{align*}
    E_3(s,u) &= \int_u^s \frac{K^H(s,u)-K^H(r,u)}{(s-r)^{\alpha+1}} r^{-\alpha} b_x(\phi_r,\mathcal{L}_{X_r}) \, dr \\
    &= c_H \alpha u^{-\alpha} \int_u^s \frac{\int_r^s (\theta-u)^{\alpha-1} \theta^\alpha \, d\theta}{(s-r)^{\alpha+1}} r^{-\alpha} b_x(\phi_r,\mathcal{L}_{X_r}) \, dr.
\end{align*}

Set \(v = \frac{r-u}{s-u}\) and \(z = \frac{\theta-u}{s-u}\), which yields:  
\begin{align*}
    E_3(s,u) &= c_H \alpha u^{-\alpha} \int_0^1 \frac{\int_{v(s-u)+u}^s (\theta-u)^{\alpha-1} \theta^\alpha \, d\theta}{(s-u-v(s-u))^{\alpha+1}} (v(s-u)+u)^{-\alpha} \\
    &\qquad\quad\qquad\quad \cdot b_x(\phi_{v(s-u)+u}, \mathcal{L}_{X_{v(s-u)+u}})(s-u) \, dv \\
    &= c_H \alpha u^{-\alpha} \int_0^1 \frac{\int_{v(s-u)+u}^s (\theta-u)^{\alpha-1} \theta^\alpha \, d\theta}{(1-v)^{\alpha+1}(s-u)^\alpha} (v(s-u)+u)^{-\alpha} \\
    &\quad\quad\quad\qquad\quad  \cdot b_x(\phi_{v(s-u)+u}, \mathcal{L}_{X_{v(s-u)+u}}) \, dv \\
    &= c_H \alpha u^{-\alpha} \int_0^1 \frac{b_x(\phi_{v(s-u)+u}, \mathcal{L}_{X_{v(s-u)+u}})}{(1-v)^{\alpha+1}(s-u)^\alpha} (v(s-u)+u)^{-\alpha} \\
    &\quad\quad \quad \qquad\quad \cdot \int_v^1 z^{\alpha-1}(z(s-u)+u)^\alpha \, dz \, dv \\
    &= c_H \alpha u^{-\alpha} \int_0^1 \frac{b_x(\phi_{v(s-u)+u}, \mathcal{L}_{X_{v(s-u)+u}})}{(1-v)^{\alpha+1}} (v(s-u)+u)^{-\alpha} \\
    &\qquad\quad\qquad\quad\cdot \int_v^1 z^{\alpha-1}(z(s-u)+u)^\alpha \, dz \, dv.
\end{align*}
	When $s=u$ we have 
	\begin{align*}
		E_3(s,s)=& c_H \alpha s^{-\alpha}
			\int_0^1 \frac{b_x(\phi_{s},\mathcal{L}_{X_{s}})}{(1-v)^{\alpha+1}}s^{-\alpha}
			 \int_{v}^1 z^{\alpha-1}s^\alpha dz dv \\
			 =& c_H s^{-\alpha} b_x(\phi_{s},\mathcal{L}_{X_{s}}) 
			 \int_0^1 (1-v)^{-\alpha-1}(1-v^\alpha)dv \\
			 =& c_H s^{-\alpha} b_x(\phi_{s},\mathcal{L}_{X_{s}}) 
			\left( (1-v)^{-\alpha}(1-v^\alpha) \Big|_0^1  +\int_0^1 v^{\alpha-1}(1-v^\alpha)dv  \right)\\
			 =&c_H s^{-\alpha} b_x(\phi_{s},\mathcal{L}_{X_{s}})\left(\frac{\Gamma(\alpha)\Gamma(1-\alpha)}{\Gamma(1)} -\frac{1}{\alpha}\right).
	\end{align*}
	Finally,
		\begin{align*}
			f(s,s)&=D_1(s,s)+D_2(s,s)=D_1(s,s)+\frac{\alpha s^\alpha}{\Gamma(1-\alpha)} E_3(s,s)\\
			&= \frac{c_H}{\Gamma(1-\alpha)}b_x(\phi_{s},\mathcal{L}_{X_{s}})
			+\frac{\alpha s^\alpha}{\Gamma(1-\alpha)}c_H s^{-\alpha} b_x(\phi_{s},\mathcal{L}_{X_{s}})\left(\frac{\Gamma(\alpha)\Gamma(1-\alpha)}{\Gamma(1)} -\frac{1}{\alpha}\right)\\
			&=\alpha c_H b_x(\phi_{s},\mathcal{L}_{X_{s}})\Gamma(\alpha)\\
			&=c_H b_x(\phi_{s},\mathcal{L}_{X_{s}})\Gamma(\alpha+1).
		\end{align*}
According to \eqref{C_2 '}, we obtain that 
\begin{align*}
	\lim_{\varepsilon \to 0} \mathbb{E}\left(\exp(cC_2) \mid \Vert B^H\Vert_\beta \leq \varepsilon \right) 
&= \exp\left(-\int_0^1 \frac{c}{2} f(s,s) \, ds\right)\\
&= \exp\left(-\frac{cd_H}{2}\int_0^1 b_x(\phi_{s},\mathcal{L}_{X_{s}}) \, ds\right).
\end{align*}

It only remains to study the behaviour of the term \(C_3\). For any \(c \in \mathbb{R}\), we define:  
\[
M_t = c \int_0^t s^\alpha D^\alpha_{0^+} s^{-\alpha} R_s \, dW_s.
\]
	Then $M_t$ is a martingale. 
	In order to estimate the quadratic variation of $M_t$, we will represent the remainder term in the following form:
	\begin{align*}
		R_s&= b(B^H_s+\phi_{s},\mathcal{L}_{X_{s}})-b(\phi_{s},\mathcal{L}_{X_{s}})-b_x(\phi_{s},\mathcal{L}_{X_{s}})B^H_s\\
		&= \int_0^1 b_x(\lambda B^H_s+ \phi_{s},\mathcal{L}_{X_{s}})-b_x(\phi_{s},\mathcal{L}_{X_{s}})B^H_s d\lambda\\
		&= \int_0^1 \int_0^\lambda 
		b_{xx}(\mu B^H_s +\phi_{s},\mathcal{L}_{X_{s}})(B^H_s)^2 d\mu d\lambda.
	\end{align*}
	Then using the fact that \( b_{xx} \) is Lipschitz continuous with constant \( L_3 \) and bounded, we have
\begin{align*}
    |R_s - R_r| 
    &\leq (B^H_s)^2 \int_0^1 \int_0^\lambda 
    |b_{xx}(\mu B^H_s + \phi_{s}, \mathcal{L}_{X_{s}})
    - b_{xx}(\mu B^H_r + \phi_{r}, \mathcal{L}_{X_{r}})| \, d\mu d\lambda \\
    &\quad + |(B^H_s)^2 - (B^H_r)^2| 
    \int_0^1 \int_0^\lambda 
    |b_{xx}(\mu B^H_s + \phi_{s}, \mathcal{L}_{X_{s}})| \, d\mu d\lambda \\
    &\leq \int_0^1 \int_0^\lambda L_3 (B^H_s)^2 (\mu |B^H_s - B^H_r| + |\phi_s - \phi_r| + 2\Vert b\Vert_\infty |s-r|+2|s-r|^H ) \, d\mu d\lambda \\
    &\quad + \int_0^1 \int_0^\lambda \mu \Vert b_{xx} \Vert_\infty |(B^H_s)^2 - (B^H_r)^2| \, d\mu d\lambda \\
    &\leq L_3 (|\phi_s - \phi_r| + |B^H_s - B^H_r| + 2\Vert b\Vert_\infty |s-r|+2|s-r|^H)(B^H_s)^2 \\
    &\quad + \Vert b_{xx} \Vert_\infty |(B^H_s)^2 - (B^H_r)^2|.
\end{align*}
By combining this with \eqref{Weil}, we obtain that there exists a constant $\bar{c}$ such that
\begin{align*}
    &\Gamma(1-\alpha) |s^\alpha D^\alpha_{0^+} s^{-\alpha} R_s| \\
    &= \left|s^{-\alpha} R_s + \alpha s^\alpha \int_0^s \frac{s^{-\alpha} R_s - r^{-\alpha} R_r}{(s-r)^{\alpha+1}} \, dr\right| \\
    &\leq k s^{-\alpha} \varepsilon^2
    + \left|\alpha s^\alpha \int_0^s \frac{s^{-\alpha} R_s - s^{-\alpha} R_r + s^{-\alpha} R_r - r^{-\alpha} R_r}{(s-r)^{\alpha+1}} \, dr\right| \\
    &\leq k s^{-\alpha} \varepsilon^2 
    + \alpha \left|\int_0^s \frac{R_s - R_r}{(s-r)^{\alpha+1}} \, dr\right|
    + a k s^\alpha \varepsilon^2 \left|\int_0^s \frac{s^{-\alpha} - r^{-\alpha}}{(s-r)^{\alpha+1}} \, dr\right| \\
    &\leq k s^{-\alpha} \varepsilon^2 + c_\alpha s^\alpha \varepsilon^2 \\
    &+ \alpha \int_0^s \frac{L_3 (|\phi_s - \phi_r| + |B^H_s - B^H_r| + 2\Vert b\Vert_\infty |s-r|+2|s-r|^H)(B^H_s)^2
     }{(s-r)^{\alpha+1}}   \\
    &\qquad\quad+\frac{\Vert b_{xx} \Vert_\infty |(B^H_s)^2 - (B^H_r)^2|}{(s-r)^{\alpha+1}}dr\\
    &\leq k s^{-\alpha} \varepsilon^2 + c_\alpha s^\alpha \varepsilon^2 \\
    &\quad + \alpha \int_0^s 
    \frac{L_3 (\Vert \phi \Vert_H (s - r)^H + \Vert B^H \Vert_\beta (s - r)^\beta +2\Vert b\Vert_\infty |s-r|+2|s-r|^H)(B^H_s)^2}{(s-r)^{\alpha+1}} \\
    &\quad \qquad\quad+ \frac{\Vert b_{xx} \Vert_\infty |B^H_s + B^H_r| \Vert B^H \Vert_\beta (s-r)^\beta}{(s-r)^{\alpha+1}} \, dr \\
    &\leq \bar{c} \varepsilon^2.
\end{align*}
Therefore, the quadratic variation of \(M_t\) satisfies
\[
\langle M \rangle_t = c^2 \int_0^t (s^\alpha D_{0^+}^\alpha s^{-\alpha} R_s)^2 \, ds \leq k' \varepsilon^4,
\]where $k'=\bar{c}c^2$.
Applying the  exponential inequality for martingales we have
 		\begin{equation*}
 			P\left(\left|\int_0^1 s^{\alpha}D_{0^+}^\alpha s^{-\alpha} R_sdW_s\right|> \xi, \Vert B^H\Vert _\beta \leq \varepsilon\right)
 	\leq \exp\left(-\frac{\xi^2}{2k'\varepsilon^4}\right).
	\end{equation*}
 	Combining with the small ball behaviour of the fractional Brownian motion under the Hölder norm \eqref{sbb holder}, we get 
 	\begin{equation}
 	P\left(\left|\int_0^1 s^{\alpha}D_{0^+}^\alpha s^{-\alpha}R_sdW_s\right|>\xi\mid \Vert B^H\Vert _\beta \leq \varepsilon\right)
 	\leq \exp\left(-\frac{\xi^2}{2k'\varepsilon^4}\right)\exp(K_2\varepsilon^\frac{-1} {H-\beta}) .\label{p2}
 \end{equation}

 Under the condition $\Vert B^H\Vert \leq \varepsilon$, let $p_{\varepsilon,t}$ be the measure on $\mathbb{R}$ induced by $M_t$, and let $F_{\varepsilon,t}$ be the distribution function of $M_t$. 	
	Hence, for any $\delta>0,$ by \eqref{p2} we have 
		\begin{align*}
			&\mathbb{E}(\exp(cC_3)\mid\Vert B^H\Vert_\beta\leq\varepsilon)\\
			=&\mathbb{E}(\exp(M_t)\mid\Vert B^H\Vert_\beta\leq\varepsilon)\\
 		=&\int_{-\infty}^\infty e^\xi p_{\varepsilon,t}(d\xi)\\
 		=&\int_{-\infty}^{\delta} e^\xi p_{\varepsilon,t}(d\xi) +\int_{\delta}^{+\infty} e^\xi  p_{\varepsilon,t}(d\xi)        \\
 		\leq & e^{\delta}- e^\xi(1-F(\xi))\Big|^\infty_\delta 
 		+\int_{\delta}^{+\infty} e^\xi (1-F(\xi)) d\xi \\
 		  \leq & e^{\delta}+\int_{\delta}^{+\infty} e^\xi P\left(\left|c\int_0^1s^{-\alpha}I_{0+}^\alpha s^\alpha R_sdW_s\right|>\xi\mid\Vert B^H\Vert_\beta \leq \varepsilon\right) d\xi\\
 		 &+e^{\delta}P\left(\left|c\int_0^1s^{-\alpha}I_{0+}^\alpha s^\alpha R_sdW_s\right|>\delta \mid\Vert B^H\Vert_\beta \leq \varepsilon\right)\\
 		 \leq & e^\delta +e^\delta   \exp\left(-\frac{\delta^2}{2k'\varepsilon^4}\right)\exp(K_2\varepsilon^\frac{-1} {H-\beta})+ \int_{\delta}^{+\infty} e^\xi\exp\left(-\frac{\xi^2}{2k'\varepsilon^4}\right)\exp(K_2\varepsilon^\frac{-1} {H-\beta})d\xi \\
			\leq & e^\delta +e^\delta   \exp\left(-\frac{\delta^2}{2k'\varepsilon^4}\right)\exp(K_2\varepsilon^\frac{-1} {H-\beta})+\exp\left(\frac{k'\varepsilon^4}{2}+K_2\varepsilon^{-\frac{1}{H-\beta}}\right) \int_{\frac{\delta}{\sqrt{2k'}\varepsilon^2}-\frac{\sqrt{2k'}\varepsilon^2}{2}}^{+\infty} e^{-x^2}dx\\
			\leq & e^\delta +e^\delta   \exp\left(-\frac{\delta^2}{2k'\varepsilon^4}\right)\exp(K_2\varepsilon^\frac{-1} {H-\beta})\\
			&+\sqrt{2k'} \frac{\pi}{8}\exp\left(\frac{k'\varepsilon^4}{2}+K_2\varepsilon^{-\frac{1}{H-\beta}}-\left({\frac{\delta}{\sqrt{2k'}\varepsilon^2}-\frac{\sqrt{2k'}\varepsilon^2}{2}}\right)^2\right)\varepsilon^2,
		\end{align*}
	 when $H-\frac{1}{2}<\beta< H-\frac{1}{4}$.
	Thus, when $H-\frac{1}{2}<\beta< H-\frac{1}{4}$, we have
		\begin{align*}
			\lim_{\varepsilon\to 0} \mathbb{E}(\exp(cC_3)\mid\Vert B^H\Vert_\beta\leq\varepsilon)  \leq e^\delta.
		\end{align*}
	By the arbitrariness of $\delta$, it can be concluded that
	\begin{equation*}
		\lim_{\varepsilon\to 0} \mathbb{E}(\exp(cC_3)\mid\Vert B^H\Vert_\beta\leq\varepsilon)\leq 1, \quad \forall c\in \mathbb{R}.
	\end{equation*}
	Hence
	\begin{equation}
		\lim_{\varepsilon\to 0} \mathbb{E}(\exp(A_1)\mid\Vert B^H\Vert_\beta\leq\varepsilon)=\lim_{\varepsilon\to 0} \mathbb{E}(\exp(C_1+C_2+C_3)\mid\Vert B^H\Vert_\beta\leq\varepsilon)=1.\label{ar1}
	\end{equation}
	
	In summary, based on \eqref{ar1}, \eqref{ar2}, \eqref{ar3}, and \eqref{ar4}, and by further applying Theorem \ref{lem8}, we can derive our result:
\begin{align*}
&\lim_{\varepsilon\to 0}\frac{P(\Vert X - \phi \Vert_\beta \leq \varepsilon)}{P(\Vert B^H \Vert_\beta \leq \varepsilon)}\\ 
=& \lim_{\varepsilon\to 0}\mathbb{E}\Big( \exp(A_1 + A_2 + A_3 + A_4) I_{\Vert B^H \Vert_{\beta} \leq \varepsilon} \Big) \\
&\quad \cdot \exp\left( -\frac{1}{2} \int_0^1 \big( \dot{\phi}_s - s^{\alpha} D_{0^+}^\alpha s^{-\alpha} b(\phi_s, \mathcal{L}_{X_s}) \big)^2 ds\right)  \\
=& \exp\left( -\frac{1}{2} \int_0^1 \left( \dot{\phi}_s - s^{\alpha} D_{0^+}^\alpha s^{-\alpha} b(\phi_s, \mathcal{L}_{X_s}) \right)^2 + d_H b_x(\phi_s, \mathcal{L}_{X_s})  ds \right) \\
=& \exp\left( J(\phi, \dot{\phi}) \right).
\end{align*}
Finally, we obtain 
\begin{equation*}
	J(\phi,\dot{\phi})=-\frac{1}{2} \int_0^1 (\dot{\phi}_s-s^{\alpha}D_{0^+}^\alpha s^{-\alpha}b(\phi_s,\mathcal{L}_{X_s} ))^2+d_H b_x(\phi_s,\mathcal{L}_{X_s})ds,
\end{equation*}
where 
	\begin{equation*}
   d_H=\left(\frac{2H\Gamma(\frac{3}{2}-H)\Gamma(H+\frac{1}{2})}{\Gamma(2-2H)}\right)^\frac{1}{2}.
	\end{equation*}

	\end{proof}
	
	\section{The finite-dimensional case of the main result}
	 In this section, we will present the finite-dimensional case of the main result in Section 3.
		 Consider the following distribution-dependent stochastic differential equation in $\mathbb{R}^n$
\begin{equation}\label{ndsde}
    X_t = x + \int_0^t b(X_s, \mathcal{L}_{X_s}) \, ds + B^H_t,
\end{equation}
where $\mathcal{L}_{X_s}$ denotes the law of $X_s$,  $B^H_t = (B^{H}_{1,t},\dots,B^{H}_{n,t})^T$ is an $n$-dimentional fractional Brownian motion with Hurst parameter $H \in (0,1)$ and $B^H_{i,t}$ are mutually independent. 
	
	For a vector-valued function \( f = (f_1, \dots, f_n) \), its norm can be extended from the one-dimensional case as follows  
\begin{equation*}  
\Vert f\Vert = \left( \sum_{i=1}^{n} \Vert f_i\Vert^2 \right)^{\frac{1}{2}}.  
\end{equation*}

	Similarly, we also need to provide the assumptions on the drift term in the finite-dimensional case.
	\begin{itemize}
    \item (\textbf{Hyp 1}) 
    \begin{enumerate}
    	\item The Jacobian matrix $\nabla_x b$ with respect to the spatial variables exists and is continuous;
    	\item  \( b \) and \( \nabla_x b \) are  bounded;
		 \item $b$ is Lipschitz continuous with constant $L_1$;
		 \item There exists a constant $k$ such  
		that 
		\begin{equation*}
			|b(x+h,\mu)-b(x,\mu)-\nabla_x b(x,\mu)h|\leq k |h|^2,\quad \forall x,h\in \mathbb{R}^n, \mu \in \mathscr{P}_2(\mathbb{R}^n).
		\end{equation*}
		
                   \end{enumerate}
\end{itemize}

 \begin{itemize}
    \item (\textbf{Hyp 2}) 
    \begin{enumerate}
    	\item The Jacobian matrix $\nabla_x b$ and the Hessian Tensor $\nabla_x^2 b$ with respect to the spatial variables exist;
    	\item  \( b \), \( \nabla_x b \), and \( \nabla_x^2 b \) are all bounded;
		\item $b,\nabla_x b$ and $\nabla_x^2 b$ are Lipschitz continuous with constants 
        $L_1$, $L_2$ and $L_3$.
        \end{enumerate}
        \end{itemize}
	To obtain the result in the finite-dimensional case, the following two lemmas are also required.
	\begin{lemma}[Theorem 5.1, \cite{karatzas1991brownian}]
		Assume that \label{ngir}
		\begin{equation*}
			Z_t(X)\triangleq \exp\left[\sum_{i=1}^n \int_0^t X_s^{(i)}dW_s^{i}-\frac{1}{2}\int_0^1 \Vert X_s\Vert^2ds \right]
		\end{equation*}
		is a martingale. Define a process $\tilde{W}=\{\tilde{W_t}=(\tilde{W_t}^{(1)},\dots,\tilde{W_t}^{(n)}),\mathcal{F}_t;0\leq t<\infty \}$ by
		\begin{equation*}
			\tilde{W_t}^{(i)}\triangleq W_t^{(i)}-\int_0^t X_s^{i}ds;\quad 1\leq i\leq n,\quad 0\leq t<\infty.
		\end{equation*}
		For each fixed $T\in [0,\infty)$, the process $\{\tilde{W_t},\mathcal{F}_t;0\leq t\leq T \}$ is a n-dimensional Brownian motion on $(\Omega,\mathcal{F}_t,\tilde{P_T}),$ where
		\begin{equation*}
			\tilde{P_T}(A)\triangleq \mathbb{E}[I_{A}Z_T(X)];\quad A\in \mathcal{F}_T.
		\end{equation*}
	\end{lemma}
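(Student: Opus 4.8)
The plan is to deduce the statement from the multidimensional Lévy characterization of Brownian motion, following the classical route to the Girsanov theorem. First I would note that, since $Z_t(X)$ is assumed to be a martingale with $Z_0(X)=1$, we have $\mathbb{E}[Z_T(X)]=1$, so that $\tilde P_T(A)=\mathbb{E}[I_A Z_T(X)]$ does define a probability measure on $\mathcal{F}_T$. Moreover $Z_T(X)>0$ almost surely, whence $\tilde P_T$ and $P$ are mutually equivalent on $\mathcal{F}_T$, and the Bayes rule $\mathbb{E}_{\tilde P_T}[\,\cdot\mid\mathcal{F}_s]=Z_s(X)^{-1}\,\mathbb{E}_P[\,\cdot\,Z_T(X)\mid\mathcal{F}_s]$ is valid for all $0\le s\le T$; this is the tool that converts $\tilde P_T$-martingale statements into $P$-martingale statements.

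Next I would verify that each coordinate process $\tilde W^{(i)}$ is a continuous $\tilde P_T$-local martingale on $[0,T]$. The principle to invoke is that a process $N$ is a $\tilde P_T$-local martingale if and only if $N\,Z(X)$ is a $P$-local martingale. Writing $dZ_t(X)=Z_t(X)\sum_{j=1}^n X_t^{(j)}\,dW_t^{(j)}$ and $d\tilde W_t^{(i)}=dW_t^{(i)}-X_t^{(i)}\,dt$, an application of Itô's product rule to $\tilde W_t^{(i)}Z_t(X)$ yields a finite-variation contribution equal to $\big(X_t^{(i)}Z_t(X)-X_t^{(i)}Z_t(X)\big)\,dt=0$ together with a stochastic integral against $W$; hence $\tilde W^{(i)}Z(X)$ is a $P$-local martingale, and therefore $\tilde W^{(i)}$ is a $\tilde P_T$-local martingale, continuous and started at $0$.

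Then I would compute the cross-variations. Because $\tilde W_t^{(i)}=W_t^{(i)}-\int_0^t X_s^{(i)}\,ds$ differs from $W^{(i)}$ only by a process of finite variation, $\langle \tilde W^{(i)},\tilde W^{(j)}\rangle_t=\langle W^{(i)},W^{(j)}\rangle_t=\delta_{ij}\,t$, and since the quadratic covariation is a pathwise (in-probability) limit of Riemann-type sums it is unchanged upon passing to the equivalent measure $\tilde P_T$. The multidimensional Lévy theorem now applies: $\{\tilde W_t,\mathcal{F}_t;0\le t\le T\}$ is a continuous $n$-dimensional local martingale vanishing at $0$ under $\tilde P_T$ with $\langle \tilde W^{(i)},\tilde W^{(j)}\rangle_t=\delta_{ij}t$, hence it is an $n$-dimensional Brownian motion on $(\Omega,\mathcal{F}_t,\tilde P_T)$.

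The main obstacle is the bookkeeping around localization and integrability rather than any deep idea: the hypothesis that $Z(X)$ is a true martingale (not merely a nonnegative local martingale, which could be a strict supermartingale with $\mathbb{E}[Z_T(X)]<1$) is exactly what is needed for $\tilde P_T$ to be a probability measure, and it must be used at that point; one must also make sure the Bayes-rule characterization of $\tilde P_T$-local martingales is applied on the correct interval $[0,T]$ with the given filtration $\{\mathcal{F}_t\}$. Once these points are in place the argument is entirely standard, and we may simply cite \cite[Theorem 5.1]{karatzas1991brownian}.
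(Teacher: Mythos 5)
Your argument is correct and is precisely the standard proof of this Girsanov-type theorem (Bayes' rule for the change of measure, the product rule to show $\tilde W^{(i)}Z(X)$ is a $P$-local martingale, invariance of the bracket under removal of a finite-variation drift and under equivalent change of measure, and then the multidimensional L\'evy characterization); the paper itself offers no proof but simply cites \cite[Theorem 5.1]{karatzas1991brownian}, whose proof follows exactly this route. No gaps; the only cosmetic remark is that the upper limit $1$ in the exponent of $Z_t(X)$ in the statement should read $t$, a typo in the paper that your computation of $dZ_t(X)$ implicitly corrects.
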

	
	\begin{lemma}[Lemma 3.6, \cite{maayan2017onsagermachlupfunctionalassociatedadditive}]\label{nex}
		Let \( F_i \) be the \( \sigma \)-field generated by \( \{ B^H_{1,t}, \dots, B^H_{i-1,t}, B^H_{i+1,t}, \dots, B^H_{n,t};\\ 0 \leq t \leq 1 \} \). Let \( \Psi(\cdot) \) be an \( F_i \)-adapted function such that
\[
\lim_{\varepsilon \to 0} \mathbb{E}\left( \exp\left( c \int_0^1 \big((K^H)^{-1}(\Psi)\big)^2(s) \, ds \right) \middle| \|B^H\| < \varepsilon \right) = 1,\quad \forall c \in \mathbb{R}_+.
\]
Then
\begin{equation*}
	\lim_{\varepsilon \to 0} \mathbb{E}\left( \exp\left(  \int_0^1 (K^H)^{-1} \Psi(s) \, dW_i^t  \right) \middle| \|B^H\| < \varepsilon \right) = 1.
\end{equation*}

	\end{lemma}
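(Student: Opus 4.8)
The plan is to deduce the statement from the one‑dimensional result (Theorem~\ref{thm2}) by a conditioning‑plus‑change‑of‑measure argument, using crucially that an $F_i$‑adapted $\Psi$ is independent of the Brownian motion $W_i$ carrying the $i$‑th coordinate $B^H_i=\int_0^\cdot K^H(\cdot,s)\,dW_i^s$. Put $Z_i:=\int_0^1 (K^H)^{-1}\Psi(s)\,dW_i^s$, $V:=\int_0^1\big((K^H)^{-1}\Psi\big)^2(s)\,ds$ (an $F_i$‑measurable, a.s.\ finite random variable, since the hypothesis forces $(K^H)^{-1}\Psi\in L^2([0,1])$ a.s., hence $\Psi\in I_{0^+}^{H+\frac12}(L^2)$), and $A_\varepsilon:=\{\|B^H\|<\varepsilon\}$. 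I claim it suffices to prove
\[
\limsup_{\varepsilon\to0}\mathbb{E}\big(\exp Z_i\,\big|\,A_\varepsilon\big)\le 1 .
\]
Indeed, $-\Psi$ is again $F_i$‑adapted with the same quadratic‑variation functional $V$, so the hypothesis holds verbatim for $-\Psi$ and the above bound also applies with $Z_i$ replaced by $-Z_i$; on the other hand Cauchy--Schwarz gives $\mathbb{E}(\exp Z_i\,|\,A_\varepsilon)\,\mathbb{E}(\exp(-Z_i)\,|\,A_\varepsilon)\ge 1$, and the two facts together pin $\mathbb{E}(\exp Z_i\,|\,A_\varepsilon)$ down to $1$.

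To obtain the upper bound I first peel off the quadratic variation with another Cauchy--Schwarz,
\[
\mathbb{E}\big(\exp Z_i\,\big|\,A_\varepsilon\big)\le \mathbb{E}\big(\mathcal E_1\,\big|\,A_\varepsilon\big)^{1/2}\;\mathbb{E}\big(e^{2V}\,\big|\,A_\varepsilon\big)^{1/2},
\]
where $\mathcal E_t:=\exp\!\big(2\int_0^t (K^H)^{-1}\Psi\,dW_i-2\int_0^t\big((K^H)^{-1}\Psi\big)^2\big)$ is the Dol\'eans exponential of $2\int_0^\cdot (K^H)^{-1}\Psi\,dW_i$. The second factor tends to $1$ as $\varepsilon\to0$ directly from the hypothesis with $c=2$, so it only remains to show $\mathbb{E}(\mathcal E_1\,|\,A_\varepsilon)\le 1$ for every $\varepsilon$, equivalently $\mathbb{E}(\mathcal E_1 I_{A_\varepsilon})\le P(A_\varepsilon)$.

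For this I would use a conditional Girsanov transform. Conditionally on $F_i$ the integrand $2(K^H)^{-1}\Psi$ is a fixed element of $L^2([0,1])$, so $\mathcal E$ is a genuine martingale in the filtration of $W_i$ with $\mathbb{E}(\mathcal E_1\mid F_i)=1$; hence $dQ:=\mathcal E_1\,dP$ defines a probability measure with $Q|_{F_i}=P|_{F_i}$. Under $Q$ the process $\widehat W_i:=W_i-2\int_0^\cdot (K^H)^{-1}\Psi\,ds$ is, by Theorem~\ref{Girsanov} (equivalently classical Girsanov applied to $W_i$), a Brownian motion that is still independent of $F_i$; consequently $B^H_i=\widehat B^H_i+2\Psi$ under $Q$, where $\widehat B^H_i:=\int_0^\cdot K^H(\cdot,s)\,d\widehat W_i^s$ is a fractional Brownian motion independent of $F_i$ while the coordinates $B^H_j$, $j\ne i$, keep their original joint law. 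Writing $\|B^H\|^2=\|B^H_i\|^2+\sum_{j\ne i}\|B^H_j\|^2$ and conditioning on $F_i$, the event $A_\varepsilon$ becomes $\{\|\widehat B^H_i+2\Psi\|^2<\varepsilon^2-\sum_{j\ne i}\|B^H_j\|^2\}$ with $\widehat B^H_i$ a fresh fBm and everything else a frozen $F_i$‑measurable constant. Since for both the supremum and the H\"older norm the set $\{f:\|f\|^2<r\}$ is symmetric and convex, Anderson's inequality gives, for the frozen radius $r$,
\[
P\big(\|\widehat B^H_i+2\Psi\|^2<r\big)\le P\big(\|\widehat B^H_i\|^2<r\big),
\]
and re‑integrating over $F_i$ (using $Q|_{F_i}=P|_{F_i}$) yields $Q(A_\varepsilon)\le P(A_\varepsilon)$, as required.

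The step that must be handled with care — and the only real obstacle — is this change of measure: one has to verify that $2(K^H)^{-1}\Psi$ is admissible for Girsanov conditionally on $F_i$, so that $\mathcal E$ is a true martingale and $K^H(K^H)^{-1}\Psi=\Psi$, making the shift exactly $2\Psi$ (this uses $\Psi\in I_{0^+}^{H+\frac12}(L^2)$ and $V<\infty$ a.s., both guaranteed by the hypothesis and the standing assumptions), and that Anderson's inequality is genuinely applicable in the path space equipped with the supremum or H\"older norm. Everything else — the two Cauchy--Schwarz splittings and the passage from the one‑sided bound for $\pm Z_i$ to the two‑sided conclusion — is routine, and the whole scheme works precisely because the sub‑$\sigma$‑field $F_i$ makes $\Psi$ independent of $W_i$.
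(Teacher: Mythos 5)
The paper does not actually prove this lemma: it is imported verbatim as Lemma~3.6 of \cite{maayan2017onsagermachlupfunctionalassociatedadditive}, so there is no in-paper argument to measure you against. Your blind proof is, in substance, a correct self-contained derivation, and it follows the route one would expect for statements of this type (Shepp--Zeitouni/Capitaine style): reduce to the one-sided bound $\limsup_\varepsilon \mathbb{E}(e^{Z_i}\mid A_\varepsilon)\le 1$ by applying the same bound to $-\Psi$ and using Cauchy--Schwarz to force the product of the two conditional expectations to be at least $1$; peel off the quadratic variation by a second Cauchy--Schwarz, disposing of the factor $\mathbb{E}(e^{2V}\mid A_\varepsilon)^{1/2}$ via the hypothesis at $c=2$; and reduce $\mathbb{E}(\mathcal E_1 I_{A_\varepsilon})\le P(A_\varepsilon)$ to a shifted small-ball estimate through a conditional Cameron--Martin shift of $W_i$ (legitimate precisely because $\Psi$ is $F_i$-measurable and $W_i$ is independent of $F_i$, so that conditionally on $F_i$ the integrand is a deterministic $L^2$ drift and $K^H(K^H)^{-1}\Psi=\Psi$), finishing with Anderson's inequality for the symmetric convex ball $\{\|f\|<r\}$ in either the supremum or the H\"older norm. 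This is a genuine proof where the paper offers only a citation, which is a net gain.

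Two points deserve explicit care rather than a passing remark. First, the change of measure $dQ=\mathcal E_1\,dP$ requires $V=\int_0^1((K^H)^{-1}\Psi)^2\,ds<\infty$ almost surely on the whole space, whereas the stated hypothesis only controls $V$ conditionally on $A_\varepsilon$; this is harmless because a.s.\ finiteness of $V$ is already implicit in the statement (otherwise the Wiener integral in the conclusion is undefined), and if one is fastidious it can be enforced by the $F_i$-measurable truncation $\Psi I_{\{V\le N\}}$. Second, your opening sentence announces a deduction from Theorem~\ref{thm2}, but the argument you then give never uses Theorem~\ref{thm2}; it is self-contained, so the framing should be corrected. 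Neither point is a gap in the mathematics.
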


	The following theorems constitute the main results in $n$-dimensional case.
	
	\begin{theorem}\label{nsincase}
		Let $X$ be the solution of  equation \eqref{ndsde}. Suppose that $\phi_i$ are functions such that $\phi_i-x_i \in \mathcal{H}^p $ with $p>\frac{1}{H}$, and let $b$ satisfy assumption  \textnormal{(\textbf{Hyp 1})} and $\frac{1}{4}< H<\frac{1}{2}.$ Then the Onsager-Machlup functional of $X$ for 
		norms $\Vert \cdot\Vert _\beta$ with $0<\beta<H-\frac{1}{4}$ and $\Vert \cdot\Vert _\infty$
		can be expressed as follows:
		\begin{equation}
			J(\phi,\dot{\phi})=-\frac{1}{2}
			\int_0^1  |\dot{\phi}_s-s^{-\alpha} I_{0^+}^\alpha s^\alpha b(\phi_s,\mathcal{L}_{X_s} )|^2+d_H \nabla_x \cdot  b(\phi_s,\mathcal{L}_{X_s})ds,\label{noms}
 		\end{equation}
	where 
	\begin{equation*}
		d_H=\left(\frac{2H\Gamma(\frac{3}{2}-H)\Gamma(H+\frac{1}{2})}{\Gamma(2-2H)}\right)^\frac{1}{2},
	\end{equation*}
	$\alpha=\frac{1}{2}-H$ and $K^H\dot{\phi}=\phi-x.$
	\end{theorem}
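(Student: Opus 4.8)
The plan is to run the argument of Theorem~\ref{sincase} one coordinate at a time, using the $n$-dimensional Girsanov transform (Lemma~\ref{ngir}) in place of the scalar one and Lemma~\ref{nex} to absorb the mixed contributions created by the drift. Write $B^H_{i,t}=\int_0^t K^H(t,s)\,dW^i_s$ with $W=(W^1,\dots,W^n)$ a standard $n$-dimensional Wiener process. Freezing the measure argument $\mathcal{L}^P_{X_s}$, shifting $Y_t=B^H_t+\phi_t$ and arguing componentwise as in Section~3 (the integrability hypothesis of Lemma~\ref{ngir} is automatic since $H<\tfrac12$, and the required martingale property follows as in \cite[Lemma 10]{Nualart}), one obtains $P(\Vert X-\phi\Vert_\beta\le\varepsilon)=\mathbb{E}(\xi_1 I_{\Vert B^H\Vert_\beta\le\varepsilon})$ with $\xi_1$ the density of Lemma~\ref{ngir}. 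Completing the square via \eqref{l1/2} extracts the deterministic factor $\exp\bigl(-\tfrac12\int_0^1|\dot{\phi}_s-s^{-\alpha}I^\alpha_{0^+}s^\alpha b(\phi_s,\mathcal{L}_{X_s})|^2\,ds\bigr)$ and reduces the problem to computing $\lim_{\varepsilon\to0}\mathbb{E}(\exp(A_1+A_2+A_3+A_4)\mid\Vert B^H\Vert_\beta\le\varepsilon)$, where
\[
A_1=\sum_{i=1}^n\int_0^1 s^{-\alpha}I^\alpha_{0^+}s^\alpha b_i(B^H_s+\phi_s,\mathcal{L}_{X_s})\,dW^i_s,\qquad A_2=-\sum_{i=1}^n\int_0^1\dot{\phi}_{i,s}\,dW^i_s,
\]
and $A_3,A_4$ are the coordinate versions of the scalar cross terms.

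The terms $A_2,A_3,A_4$ are dispatched exactly as in Theorem~\ref{sincase}, applied in each coordinate: $A_2$ has conditional exponential limit $1$ by Theorem~\ref{thm2}, while the Lipschitz bound on $b$ together with estimate \eqref{a416} (valid because $|B^H_r|\le\varepsilon$ on $\{\Vert B^H\Vert\le\varepsilon\}$) gives $|A_3|+|A_4|\le C\Vert\dot{\phi}\Vert_{L^p}\varepsilon$, so both have conditional exponential limsup $\le1$. For $A_1$, hypothesis (\textbf{Hyp 1})(4) provides the spatial Taylor expansion $b_i(B^H_s+\phi_s,\mathcal{L}_{X_s})=b_i(\phi_s,\mathcal{L}_{X_s})+\nabla_x b_i(\phi_s,\mathcal{L}_{X_s})\cdot B^H_s+R^i_s$ with $\sup_s|R^i_s|\le k\varepsilon^2$ on $\{\Vert B^H\Vert\le\varepsilon\}$, so $A_1=C_1+C_2+C_3$ with $C_1=\sum_i\int_0^1 s^{-\alpha}I^\alpha_{0^+}s^\alpha b_i(\phi_s,\mathcal{L}_{X_s})\,dW^i_s$ (conditional exponential limit $1$ by Theorem~\ref{thm2}) and $C_3=\sum_i\int_0^1 s^{-\alpha}I^\alpha_{0^+}s^\alpha R^i_s\,dW^i_s$, controlled as in the scalar proof by the exponential martingale inequality (Theorem~\ref{exponential ineq}) together with the small-ball estimate \eqref{sbb holder} (resp. \eqref{sup}); this is where the restriction $\beta<H-\tfrac14$ enters.

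The decisive term is $C_2=\sum_{i,j=1}^n\int_0^1 s^{-\alpha}I^\alpha_{0^+}s^\alpha\bigl(\partial_j b_i(\phi_s,\mathcal{L}_{X_s})B^H_{j,s}\bigr)\,dW^i_s$, which I split into the off-diagonal part ($i\ne j$) and the diagonal part ($i=j$). When $i\ne j$ the integrand depends only on $B^H_j$ (and on the deterministic $\phi$ and the $P$-frozen law), hence is adapted to the $\sigma$-field $F_i$ of Lemma~\ref{nex}, and \eqref{a416} gives $\int_0^1|s^{-\alpha}I^\alpha_{0^+}s^\alpha(\partial_j b_i(\phi_s,\mathcal{L}_{X_s})B^H_{j,s})|^2\,ds\le C\varepsilon^2$ on $\{\Vert B^H\Vert\le\varepsilon\}$, so the hypothesis of Lemma~\ref{nex} is satisfied and every off-diagonal term has conditional exponential limit $1$. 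For $i=j$, $C_2^{ii}=\int_0^1 s^{-\alpha}I^\alpha_{0^+}s^\alpha(\partial_i b_i(\phi_s,\mathcal{L}_{X_s})B^H_{i,s})\,dW^i_s$ is precisely the scalar situation: representing it by Theorem~\ref{fubini} as a double stochastic integral in $W^i$, symmetrizing the kernel, verifying nuclearity (\cite[Lemma 13]{Nualart}) and applying Theorem~\ref{thm3} yields $\lim_{\varepsilon\to0}\mathbb{E}(\exp(C_2^{ii})\mid\Vert B^H\Vert\le\varepsilon)=\exp\bigl(-\tfrac{d_H}{2}\int_0^1\partial_i b_i(\phi_s,\mathcal{L}_{X_s})\,ds\bigr)$, the diagonal kernel value being $d_H\,\partial_i b_i(\phi_s,\mathcal{L}_{X_s})$ as in the one-dimensional computation. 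Summing over $i$ reconstructs $\nabla_x\cdot b=\sum_i\partial_i b_i$; assembling the limits through Theorem~\ref{lem8} for the terms with limsup $\le1$ and the convergence of the diagonal part gives
\[
\lim_{\varepsilon\to0}\frac{P(\Vert X-\phi\Vert_\beta\le\varepsilon)}{P(\Vert B^H\Vert_\beta\le\varepsilon)}=\exp\Bigl(-\tfrac12\int_0^1|\dot{\phi}_s-s^{-\alpha}I^\alpha_{0^+}s^\alpha b(\phi_s,\mathcal{L}_{X_s})|^2+d_H\,\nabla_x\cdot b(\phi_s,\mathcal{L}_{X_s})\,ds\Bigr),
\]
which is \eqref{noms}; the supremum-norm case is identical after replacing \eqref{sbb holder} with \eqref{sup}. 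I expect the main obstacle to be the treatment of $C_2$: one must check that the off-diagonal integrands are genuinely $F_i$-adapted — which relies on the law being the \emph{fixed} $P$-law, not the constrained one — so that Lemma~\ref{nex} applies, and one must transfer the scalar double-integral limit for $C_2^{ii}$ to the joint conditioning $\{\Vert B^H\Vert<\varepsilon\}$ and merge it with the vanishing of all other terms inside a single conditional expectation, the same delicate passage as in the scalar proof but now carried out $n$ times and summed.
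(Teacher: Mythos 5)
Your proposal is correct and follows essentially the same route as the paper: an $n$-dimensional Girsanov reduction, a componentwise repetition of the scalar argument from Theorem~\ref{sincase}, and Lemma~\ref{nex} to kill the off-diagonal terms $i\neq j$ in the Taylor-expanded drift so that only the diagonal trace contributions survive and sum to $\nabla_x\cdot b$. In fact your write-up supplies more detail (the explicit $F_i$-adaptedness check and the $O(\varepsilon^2)$ bound needed for Lemma~\ref{nex}) than the paper's own sketch.
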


	\begin{theorem}\label{nrecase}
		Let $X$ be the solution of  equation \eqref{ndsde}. Suppose that $\phi_i$ are  functions such that $\phi_i-x_i \in \mathcal{H}^2 $, and let $b$ satisfy assumption \textnormal{(\textbf{Hyp 2})} and $H>\frac{1}{2}.$ Then the Onsager-Machlup functional of $X$ for the 
		norm $\Vert \cdot\Vert _\beta$ with $H-\frac{1}{2}<\beta<H-\frac{1}{4}$ 
		can be expressed as follows:
		\begin{equation}
			J(\phi,\dot{\phi})=-\frac{1}{2} \int_0^1 |\dot{\phi}_s-s^{\alpha}D_{0^+}^\alpha s^{-\alpha}b(\phi_s,\mathcal{L}_{X_s} )|^2+d_H \nabla_x \cdot b(\phi_s,\mathcal{L}_{X_s})ds,\label{nomr}
		\end{equation}    
	where 
	\begin{equation*}
		d_H=\left(\frac{2H\Gamma(\frac{3}{2}-H)\Gamma(H+\frac{1}{2})}{\Gamma(2-2H)}\right)^\frac{1}{2},
	\end{equation*}
	$\alpha=H-\frac{1}{2}$ and $K^H\dot{\phi}=\phi-x.$
	\end{theorem}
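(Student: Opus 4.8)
The plan is to run the argument of Theorem~\ref{recase} coordinate by coordinate, replacing Theorem~\ref{Girsanov} by the $n$-dimensional Girsanov transform of Lemma~\ref{ngir} and using the conditioning lemma (Lemma~\ref{nex}) to control the coupling between components introduced by the off-diagonal entries of $\nabla_x b$. First I would perform the Girsanov transformation with the distributional argument frozen: set $Y_t = B^H_t + \phi_t$ and define $\tilde B^H$ componentwise as in Section~3, with $\int_0^t u^{(i)}_s\,ds = \int_0^t b_i(B^H_s+\phi_s,\mathcal L^P_{X_s})\,ds - (\phi^{(i)}_t - x_i)$. Exactly as in the scalar regular case one checks that $\int_0^\cdot b(B^H_r+\phi_r,\mathcal L^P_{X_r})\,dr \in I^{H+\frac12}_{0^+}(L^2([0,1]))$, since each $b_i(B^H_\cdot+\phi_\cdot,\mathcal L^P_{X_\cdot})$ is Hölder continuous of order strictly greater than $H-\frac12$ along almost every path (Lipschitz continuity of $b$, $H$-Hölder continuity of $\phi$, $(H-\varepsilon)$-Hölder continuity of $B^H$), so the exponential $Z_\cdot$ of Lemma~\ref{ngir} is a martingale with unit expectation, $\tilde B^H$ is an $n$-dimensional fractional Brownian motion under $d\tilde P = \xi_1\,dP$, and $(Y,\tilde B^H)$ solves \eqref{ndsde}. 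This reduces $\gamma_\varepsilon(\phi)$ to $\mathbb E(\xi_1\,I_{\Vert B^H\Vert_\beta\le\varepsilon})/P(\Vert B^H\Vert_\beta\le\varepsilon)$; since $(K^H)^{-1}$ and the driving noise split across coordinates, $-\ln\xi_1$ is a sum over $i=1,\dots,n$ of the scalar expressions of Theorem~\ref{recase} with $b$ and $b_x$ replaced by $b_i$ and the $i$-th row of $\nabla_x b$. Pulling out the deterministic factor $\exp\bigl(-\frac12\int_0^1|\dot\phi_s - s^{\alpha}D^{\alpha}_{0^+}s^{-\alpha}b(\phi_s,\mathcal L_{X_s})|^2\,ds\bigr)$, it remains to compute $\lim_{\varepsilon\to0}\mathbb E\bigl(\exp(\sum_{i=1}^n(A_1^{(i)}+A_2^{(i)}+A_3^{(i)}+A_4^{(i)}))\mid\Vert B^H\Vert_\beta\le\varepsilon\bigr)$.

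Next I would dispose of the routine terms exactly as in Theorem~\ref{recase}. For each $i$, $A_2^{(i)}=-\int_0^1\dot\phi^{(i)}_s\,dW^i_s$ has conditional limit $1$ by Theorem~\ref{thm2}; the terms $A_3^{(i)}$ and $A_4^{(i)}$ involve only the increment $b_i(B^H_s+\phi_s,\cdot)-b_i(\phi_s,\cdot)$, which via the Weyl representation \eqref{Weil} and the Lipschitz bounds on $b,\nabla_x b$ is $O(\varepsilon)$ uniformly on $\{\Vert B^H\Vert_\beta\le\varepsilon\}$ (the $B_1,B_2,B_3$ splitting of the scalar proof, where the lower bound $\beta>H-\frac12$ enters, goes through verbatim), so these also have conditional limit $1$. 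For $A_1^{(i)}$, Taylor-expand with Hyp~2 in the full spatial variable, $b_i(B^H_s+\phi_s,\mathcal L_{X_s}) = b_i(\phi_s,\mathcal L_{X_s}) + \sum_{j}\partial_{x_j}b_i(\phi_s,\mathcal L_{X_s})B^H_{j,s} + R^{(i)}_s$ with $\sup_s|R^{(i)}_s|\le\Vert\nabla_x^2 b\Vert_\infty\varepsilon^2$, and split $A_1^{(i)}=C_1^{(i)}+C_2^{(i)}+C_3^{(i)}$; again $C_1^{(i)}$ has conditional limit $1$ by Theorem~\ref{thm2}.

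The heart of the proof is $C_2^{(i)}=\sum_{j=1}^n\int_0^1 s^{\alpha}D^{\alpha}_{0^+}s^{-\alpha}\bigl(\partial_{x_j}b_i(\phi_s,\mathcal L_{X_s})B^H_{j,s}\bigr)\,dW^i_s$. For the diagonal piece $j=i$ I would repeat the scalar computation: Theorem~\ref{fubini} together with the integrability estimates \eqref{f1}--\eqref{f3} (with $b_x$ replaced by $\partial_{x_i}b_i$) lets one write it as a symmetric double stochastic integral $\int_0^1\int_0^1\tilde f_i\,dW^i\,dW^i$, one shows $\tilde f_i(s,s)=\frac12 c_H\Gamma(\alpha+1)\,\partial_{x_i}b_i(\phi_s,\mathcal L_{X_s})$ as in Theorem~\ref{recase}, and Theorem~\ref{thm3} gives conditional limit $\exp\bigl(-\frac{c\,d_H}{2}\int_0^1\partial_{x_i}b_i(\phi_s,\mathcal L_{X_s})\,ds\bigr)$. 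For the off-diagonal pieces $j\neq i$ the integrand involves only $B^H_{j}$ with $j\neq i$, $\phi$ and the frozen law, hence is measurable for the $\sigma$-field $F_i$ of Lemma~\ref{nex}; moreover on $\{\Vert B^H\Vert_\beta\le\varepsilon\}$ one has $|B^H_{j,s}|\le\varepsilon s^{\beta}$, and the $B_1,B_2,B_3$-type bound via \eqref{Weil} gives $|s^{\alpha}D^{\alpha}_{0^+}s^{-\alpha}(\partial_{x_j}b_i(\phi_s,\cdot)B^H_{j,s})|\le C\varepsilon$ uniformly in $s$, so $\int_0^1((K^H)^{-1}\Psi)^2(s)\,ds\le C^2\varepsilon^2\to0$ and the hypothesis of Lemma~\ref{nex} holds; thus every off-diagonal contribution has conditional limit $1$. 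Summing over $i$, $\sum_i C_2^{(i)}$ produces the factor $\exp\bigl(-\frac{d_H}{2}\int_0^1\sum_i\partial_{x_i}b_i(\phi_s,\mathcal L_{X_s})\,ds\bigr)=\exp\bigl(-\frac{d_H}{2}\int_0^1\nabla_x\cdot b(\phi_s,\mathcal L_{X_s})\,ds\bigr)$.

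It remains to treat $C_3^{(i)}=\int_0^1 s^{\alpha}D^{\alpha}_{0^+}s^{-\alpha}R^{(i)}_s\,dW^i_s$: writing $R^{(i)}_s$ as a double integral of $\nabla_x^2 b_i$ and using that $\nabla_x^2 b$ is bounded and Lipschitz (Hyp~2), the estimate of the scalar proof gives $|s^{\alpha}D^{\alpha}_{0^+}s^{-\alpha}R^{(i)}_s|\le\bar c\,\varepsilon^2$ on $\{\Vert B^H\Vert_\beta\le\varepsilon\}$, so the martingale $M^{(i)}_t=c\int_0^t s^{\alpha}D^{\alpha}_{0^+}s^{-\alpha}R^{(i)}_s\,dW^i_s$ has $\langle M^{(i)}\rangle_t\le k'\varepsilon^4$; combining the exponential martingale inequality (Theorem~\ref{exponential ineq}), the Hölder small-ball bound \eqref{sbb holder}, and the upper bound $\beta<H-\frac14$ (so that $\varepsilon^{-4}$ dominates $\varepsilon^{-1/(H-\beta)}$ in the exponent), the truncation argument of Theorem~\ref{recase} gives $\limsup_{\varepsilon\to0}\mathbb E(\exp(cC_3^{(i)})\mid\Vert B^H\Vert_\beta\le\varepsilon)\le1$ for every $c\in\mathbb R$. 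Finally, combining the diagonal $C_2$ limits by a Hölder-inequality sandwich as in \eqref{ar1} and applying Theorem~\ref{lem8} to all the remaining terms (each with conditional $\limsup\le1$), one obtains $\lim_{\varepsilon\to0}\gamma_\varepsilon(\phi)=\exp\bigl(-\frac12\int_0^1|\dot\phi_s-s^{\alpha}D^{\alpha}_{0^+}s^{-\alpha}b(\phi_s,\mathcal L_{X_s})|^2\,ds-\frac{d_H}{2}\int_0^1\nabla_x\cdot b(\phi_s,\mathcal L_{X_s})\,ds\bigr)=\exp(J(\phi,\dot\phi))$, which is \eqref{nomr}. I expect the main obstacle to be the off-diagonal part of $C_2$: one must pin down the $F_i$-measurability of its integrand and verify the exponential quadratic-variation condition of Lemma~\ref{nex}, which is precisely the step where the mutual independence of the components $B^H_{i}$ and the coordinatewise action of $(K^H)^{-1}$ are used; the rest is a faithful coordinatewise transcription of Theorem~\ref{recase}.
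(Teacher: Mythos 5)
Your proposal is correct and follows essentially the same route as the paper: the $n$-dimensional Girsanov transform of Lemma~\ref{ngir}, a coordinatewise transcription of the scalar regular-case estimates, and Lemma~\ref{nex} to eliminate the off-diagonal terms $\int_0^1 s^{\alpha}D^{\alpha}_{0^+}s^{-\alpha}\bigl(\partial_{x_j}b_i(\phi_s,\mathcal L_{X_s})B^H_{j,s}\bigr)\,dW^i_s$ with $j\neq i$, leaving only the diagonal trace contributions that sum to $\nabla_x\cdot b$. In fact your write-up is considerably more detailed than the paper's own sketch (which only gestures at these steps), and your verification of the $F_i$-measurability and of the quadratic-variation hypothesis of Lemma~\ref{nex} for the off-diagonal pieces is exactly the point the paper leaves implicit.
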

	\begin{proof}
		Our proof method is the same as in the one-dimensional case: We first simplify the ratio using the Girsanov transformation and then compute the conditional expectation. 
		According to  Lemma \ref{ngir},  we set $Y_t=B^H_t+\phi_t\ 
 $
 	and
 		\begin{equation*}
 			\tilde{W}_t=W_t-\int_0^t (K^H)^{-1}\left(\int_0^\cdot b(B^H_r+\phi_r,\mathcal{L}^P_{X_r} )dr\right)(s)-\dot{\phi}_sds,
 		\end{equation*}
 		where $\int_0^s b(B^H_r + \phi_r, \mathcal{L}^P_{X_r})  dr \in I_{0^+}^{H + \frac{1}{2}}(L^2([0,1],\mathbb{R}^n))$.
 		The ratio is simplified using two weak solutions $X$ and $Y$ of the same equation. The difference in the ratio computation lies in the calculation of the $A_4$ term. Using the Taylor expansion, the following type of term appears
\begin{equation*}
	\int_0^1 (K^H)^{-1}\left(\int_0^\cdot b^i_{x_j}(\phi_r,\mathcal{L}_{X_r} ) B^H_{j,r}dr\right)(s) dW_{i,s}.
\end{equation*}
From Lemma \ref{nex}, we know that we only need to compute the exponential conditional expectation corresponding to $i=j$, which is done in the same way as in the one-dimensional case. In conclusion, we obtain the Onsager-Machlup functional in the finite-dimensional setting.
	\end{proof}

	\section{Numerical experiments}
	In this section, we validate our main result through numerical simulations of two specific SDEs.  
Specifically, for a given SDE, we obtain its most probable transition path from state \( X_0 = x_0 \) to state \( X_1 = x_1 \) using the Onsager-Machlup functional.  
We then compare the most probable path with the actual paths of the SDE transitioning from \( x_0 \) to \( x_1 \).

	We can obtain the most probable path using the Euler-Lagrange equation.
	
When $H=\frac{1}{2}$, the Onsager-Machlup functional takes the form:
\begin{equation*}
    J(\phi,\dot{\phi})=-\frac{1}{2} \int_0^1 \left({\phi}_s'-b(\phi_s,\mathcal{L}_{X_s})\right)^2 + b_x(\phi_s,\mathcal{L}_{X_s})\,ds,
\end{equation*}
and the most probable path $\phi^*$ transitioning from state $x_0$ to $x_1$ must satisfy the Euler-Lagrange equation
\begin{equation}
    \frac{\partial L(s,(\phi^*)',\phi^*)}{\partial \phi} = \frac{d}{dt} \frac{\partial L(s,(\phi^*)',\phi^*)}{\partial \phi'} \label{el}
\end{equation}
with boundary conditions $\phi^*_0=x_0$, $\phi^*_1=x_1$, where $L(s,\phi',\phi)=\left({\phi}_s'-b(\phi_s,\mathcal{L}_{X_s})\right)^2 + b_x(\phi_s,\mathcal{L}_{X_s})$. 

For $H \in (\frac{1}{4}, \frac{1}{2}) \cup (\frac{1}{2}, 1)$, the governing equation for the most probable transition path can similarly be derived using variational calculus.

For $\frac{1}{4}<H<\frac{1}{2}$, equation \eqref{oms} yields
\begin{equation*}
	 J(\phi,\dot{\phi})=-\frac{1}{2}\int_0^1  (s^{-\alpha} I_{0^+}^\alpha s^\alpha \phi_s'-s^{-\alpha} I_{0^+}^\alpha s^\alpha b(\phi_s,\mathcal{L}_{X_s} ))^2+d_H b_x(\phi_s,\mathcal{L}_{X_s})ds.
\end{equation*}
	Suppose $\phi^*$ is an extremum. For any $\eta\in C^\infty_0([0,1])$, the first variation via the variational principle combined with \eqref{fffubini} gives
	\begin{align*}
  & -2\lim_{\varepsilon\to 0} \frac{J(\phi^*+\varepsilon \eta)-J(\phi^*)}{\varepsilon}\\ 
  = & \int_0^1 
  	2(s^{-\alpha} I_{0^+}^\alpha s^\alpha (\phi_s^*)'
  	-s^{-\alpha} I_{0^+}^\alpha s^\alpha b(\phi_s^*,\mathcal{L}_{X_s} ))(s^{-\alpha} I_{0^+}^\alpha s^\alpha \eta_s'-s^{-\alpha} I_{0^+}^\alpha s^\alpha b_x(\phi_s^*,\mathcal{L}_{X_s} )\eta_s)\\
  &\quad\quad\quad+d_H b_{xx}(\phi_s^*,\mathcal{L}_{X_s} )\eta_s ds\\
  =&  \int_0^1 2[s^\alpha I^{\alpha}_{1^-} s^{-2\alpha} I_{0^+}^\alpha s^\alpha ((\phi_s^*)'-b(\phi_s^*,\mathcal{L}_{X_s} ))](\eta_s'-b_x(\phi_s^*,\mathcal{L}_{X_s} )\eta_s)+d_H b_{xx}(\phi_s^*,\mathcal{L}_{X_s} )\eta_s ds\\
  =& \int_0^1 -2(\frac{d}{ds}+b_x(\phi_s^*,\mathcal{L}_{X_s} ))[s^\alpha I^{\alpha}_{1^-} s^{-2\alpha} I_{0^+}^\alpha s^\alpha ((\phi_s^*)'-b(\phi_s^*,\mathcal{L}_{X_s} ))]\eta_s+d_H b_{xx}(\phi_s^*,\mathcal{L}_{X_s} )\eta_s ds\\
  =&0.
\end{align*}
Therefore, the most probable path $\phi^*$ connecting states $x_0$ and $x_1$ must satisfy the Euler-Lagrange equation:
	\begin{equation}
		2\left(\frac{d}{ds}+b_x(\phi_s^*,\mathcal{L}_{X_s} )\right)\left[s^\alpha I^{\alpha}_{1^-} s^{-2\alpha} I_{0^+}^\alpha s^\alpha ((\phi_s^*)'-b(\phi_s^*,\mathcal{L}_{X_s} ))\right]-d_H b_{xx}(\phi_s^*,\mathcal{L}_{X_s} )=0,
	\end{equation}
	subject to the boundary conditions $\phi_0^*=x_0$, $\phi_1^*=x_1$. 
	
	For the case \( H > \frac{1}{2} \), the Euler-Lagrange equation follows from a similar method:
\begin{equation}
2\left( \frac{d}{ds} + b_x(\phi_s^*,\mathcal{L}_{X_s} ) \right) \left[ s^{-\alpha} D^{\alpha}_{1^-} s^{2\alpha} D_{0^+}^\alpha s^{-\alpha} \bigl( \phi_s'^* - b(\phi_s^*,\mathcal{L}_{X_s} ) \bigr) 
    \right] 
 - d_H b_{xx}(\phi_s^*,\mathcal{L}_{X_s} ) = 0,
\end{equation}
governed by the boundary conditions \( \phi_0^* = x_0 \), \( \phi_1^* = x_1 \).

	Below, we will demonstrate the application of the Onsager-Machlup functional in finding the most probable paths using specific one-dimensional and two-dimensional equations, along with numerical simulations.	
\begin{example}
	Consider the following stochastic differential equation 
\begin{equation}
    X_t =\pi+\int_0^t \sin(\mathbb{E}(X_s))ds + B^H_t, \quad t \in [0,1],\ H \in (1/4,1),
    \label{exam}
\end{equation}
where the drift term is defined as
\[
    b(x,\mu) = \sin\left(\int_{\mathbb{R}} y\ \mu(dy)\right).
\]
Under hypotheses \textnormal{(\textbf{Hyp 1})} and \textnormal{(\textbf{Hyp 2})}, this system exhibits distinct dynamical behaviors. In the noise-free scenario, the deterministic equation
\[
    dX_t = \sin(X_t)dt
\]
admits $x = \pi$ as a stable equilibrium point. However, under noise perturbation, transitions from $\pi$ to other states become possible. It is not difficult to observe that \( X_t = \pi + B_t^H \) is the solution of \eqref{exam}.
We investigate the most probable transition path from \( \pi \) to \( 2 \) governed by the following fractional differential equations:

For \( \frac{1}{2}\leq H   \):
\begin{equation}
    \frac{d}{ds}s^{-\alpha}D^\alpha_{1^-}s^{2\alpha}D^\alpha_{0^+}s^{-\alpha}\phi_s' 
    = 0;
    \label{Hhigh}
\end{equation}

For \( \frac{1}{4}<H < \frac{1}{2} \):
\begin{equation}
    \frac{d}{ds} s^{\alpha}I^\alpha_{1^-}s^{-2\alpha}I^\alpha_{0^+}s^{\alpha}\phi_s' 
       = 0,
    \label{Hlow}
\end{equation}
subject to boundary conditions $\phi(0) = \pi$, $\phi(1) = 2$. 

By solving these fractional ODEs \eqref{Hhigh} and \eqref{Hlow}, we obtain the most probable paths for different values of $H=3/10, 1/2, 7/10$. The comparison between the numerically simulated true paths and the most probable path are displayed in Figure \ref{fig1}. When \(H=1/2\), the most probable path is a linear function. When \( H = 3/10 \), the noise exhibits negative correlation, and the most probable path is concave downward near \( 0 \) and concave upward near \( 1 \). When \( H = 7/10 \), the noise exhibits positive correlation, and the most probable path has the opposite convexity.

\end{example}

	\begin{example}
		Consider the following 2-dimentional stochastic differential equation
		\begin{equation}
			\begin{bmatrix}
				X_t \\
				Y_t
			\end{bmatrix}
			=\begin{bmatrix}
				-\frac{\pi}{2}\\
				0
			\end{bmatrix} +\int_0^t\begin{bmatrix}
				Y_s\\
				-\frac{\Gamma(\frac{1}{4})^4}{4\pi} \sin(\mathbb{E}( X_s))
			\end{bmatrix}ds+							B^H_t
		 .\label{ex21}
		\end{equation}
		 When noise is absent, system \eqref{ex21} reduces to the classical undamped simple pendulum equation
		 \begin{equation}
Z'' + \frac{\Gamma(\frac{1}{4})^4}{4\pi} \sin Z= 0, \label{exam22}
\end{equation}
	whose phase portrait is shown in Fig \ref{fig2}.
		
	 It is straightforward to verify that equation \eqref{ex21} satisfies \textnormal{(\textbf{Hyp 1})} and \textnormal{(\textbf{Hyp 2})}. Applying Theorems \ref{nsincase} and \ref{nrecase}, we derive the Onsager-Machlup functional for \eqref{ex21} in the cases of $H\geq 1/2$ and $1/4<H<1/2$, respectively
	\begin{align}
		 J(\phi)&=-\frac{1}{2}\int_0^1 
		 \left( [s^{\alpha} D_{0^+}^\alpha s^{-\alpha} (\phi_{1,s}'-\phi_{2,s})]^2
		  +\left[s^{\alpha} D_{0^+}^\alpha s^{-\alpha}\left(\phi_{2,s}'+\frac{\Gamma(\frac{1}{4})^4}{4\pi} \sin(\mathbb{E}( X_s))\right)\right]^2
		  \right)ds, \quad \label{ex23}\\
		   J(\phi)&=-\frac{1}{2}\int_0^1 
		  \left([s^{-\alpha} I_{0^+}^\alpha s^\alpha (\phi_{1,s}'-\phi_{2,s})]^2
		  +\left[s^{-\alpha} I_{0^+}^\alpha s^\alpha\left(\phi_{2,s}'+\frac{\Gamma(\frac{1}{4})^4}{4\pi} \sin(\mathbb{E}( X_s))\right)\right]^2 
		  \right)ds.\label{ex24}
	\end{align}
	
	Through expectation analysis of \eqref{ex21}, we establish that 
$\mathbb{E}[X_s]$ satisfies the deterministic pendulum equation \eqref{exam22}. 
Energy conservation arguments reveal that solutions $Z_t$ of \eqref{exam22} 
with initial conditions $[Z(1), Z'(1)]^\mathsf{T} = [\pi/2, 0]^\mathsf{T}$ 
maintain constant total energy.
	It is easy to deduce from \eqref{ex23} and \eqref{ex24} that for \( H \in (1/4,1) \), the most probable transition paths of \eqref{ex21} connecting 
$[-\pi/2, 0]^\mathsf{T}$ to $[\pi/2, 0]^\mathsf{T}$ correspond precisely to 
solutions of the deterministic system \eqref{exam22}.
The comparison between the numerically simulated true paths and the most probable paths can be seen in Fig.\ref{fig3}.

\end{example} 

	\begin{figure*}[h]
		\centering
		\includegraphics[width=0.7\textwidth]{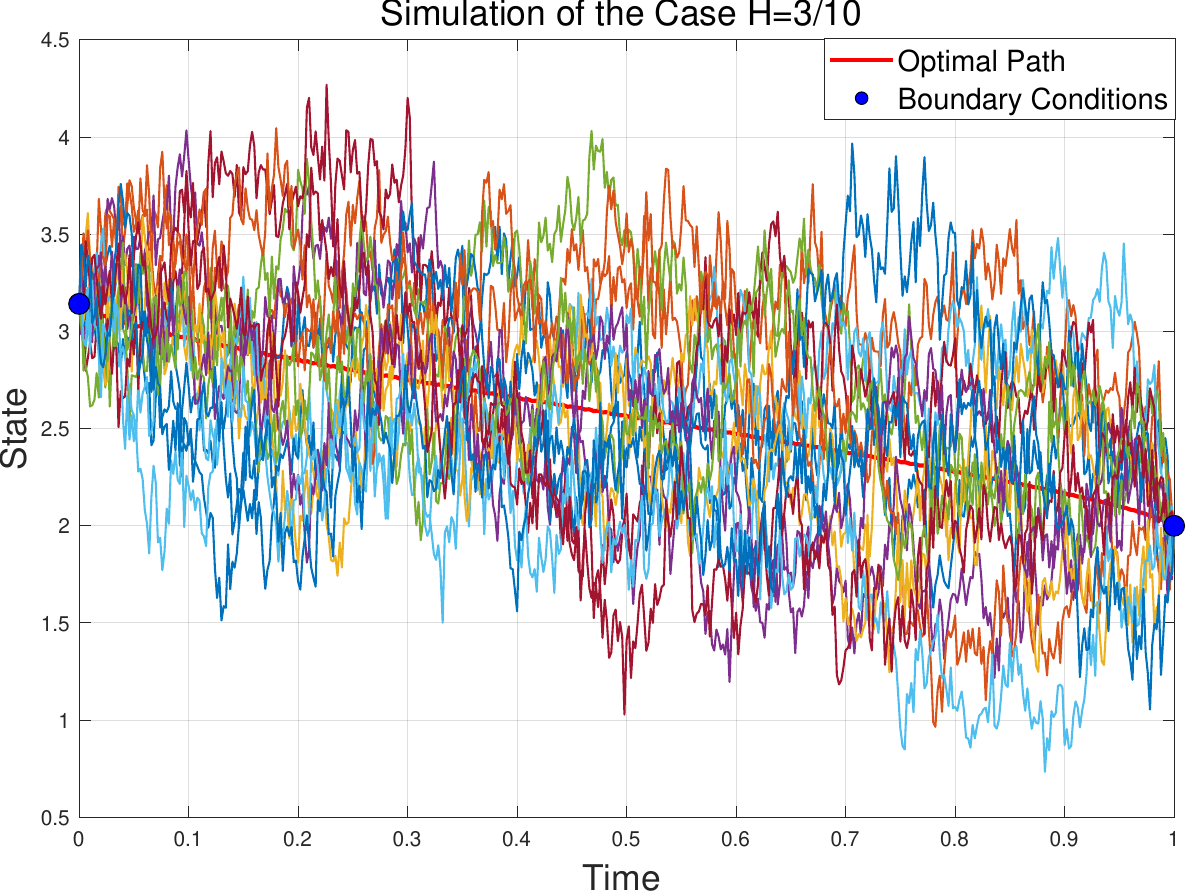}
	\end{figure*}

	\begin{figure*}[h]
		\centering
		\includegraphics[width=0.7\textwidth]{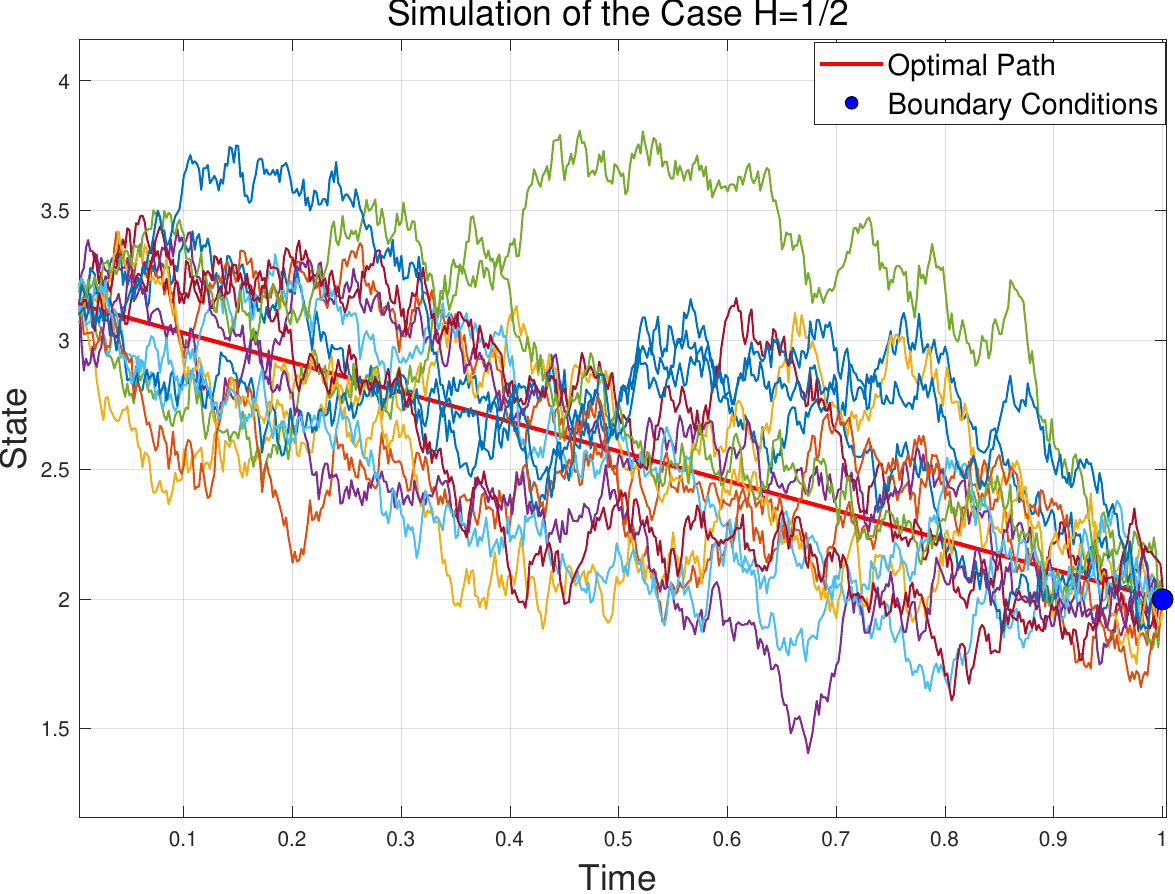}
	\end{figure*}
	\begin{figure}[h]
		\centering
		\includegraphics[width=0.7\textwidth]{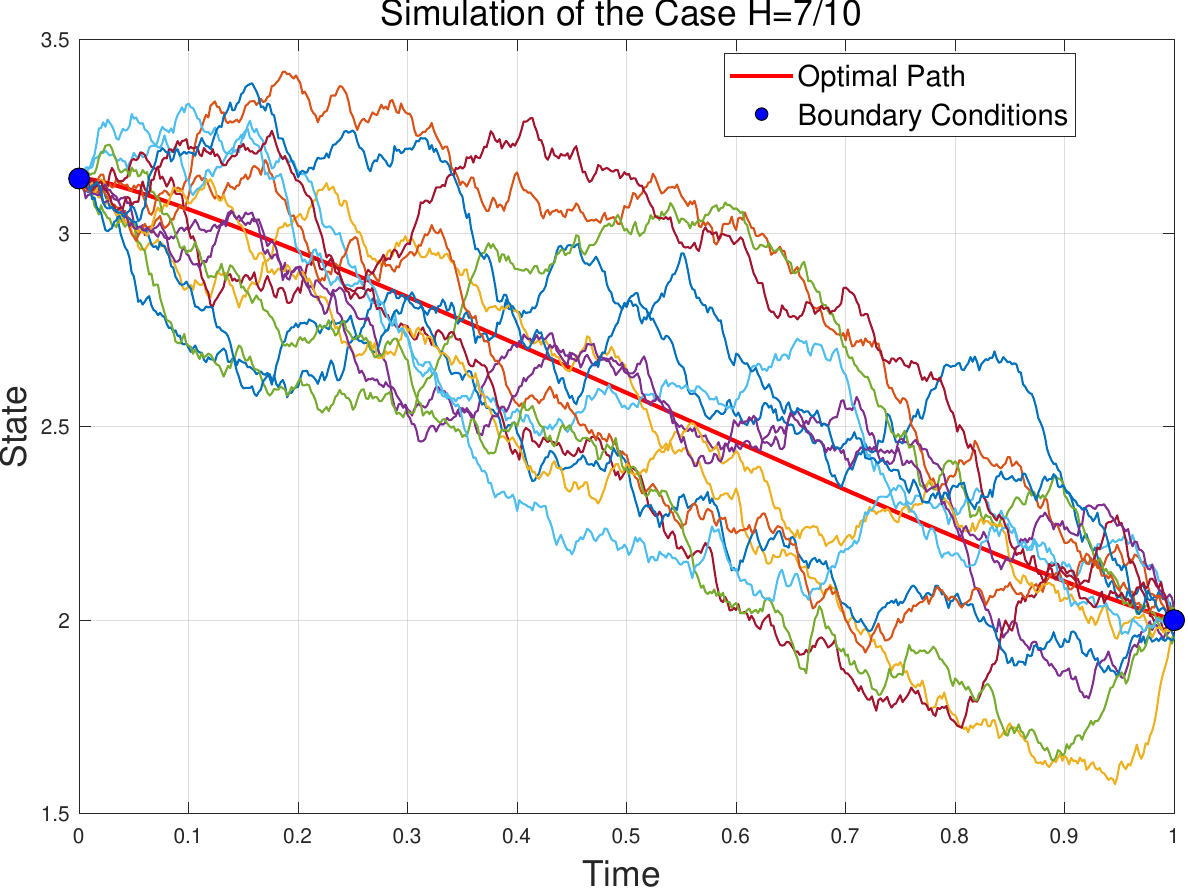}
		\caption{The stochastic process \( X_t \) follows equation \eqref{exam}, transitioning from the initial state \( x = \pi \) to the final state \( x = 2 \). The most probable path, \( \phi_t \) (depicted by the red line), is illustrated for \( H = \frac{3}{10} \), \( H = \frac{1}{2} \), and \( H = \frac{7}{10} \).}
    \label{fig1}
	\end{figure}

\begin{figure}[h]  
    \centering
    \includegraphics[width=0.7\textwidth]{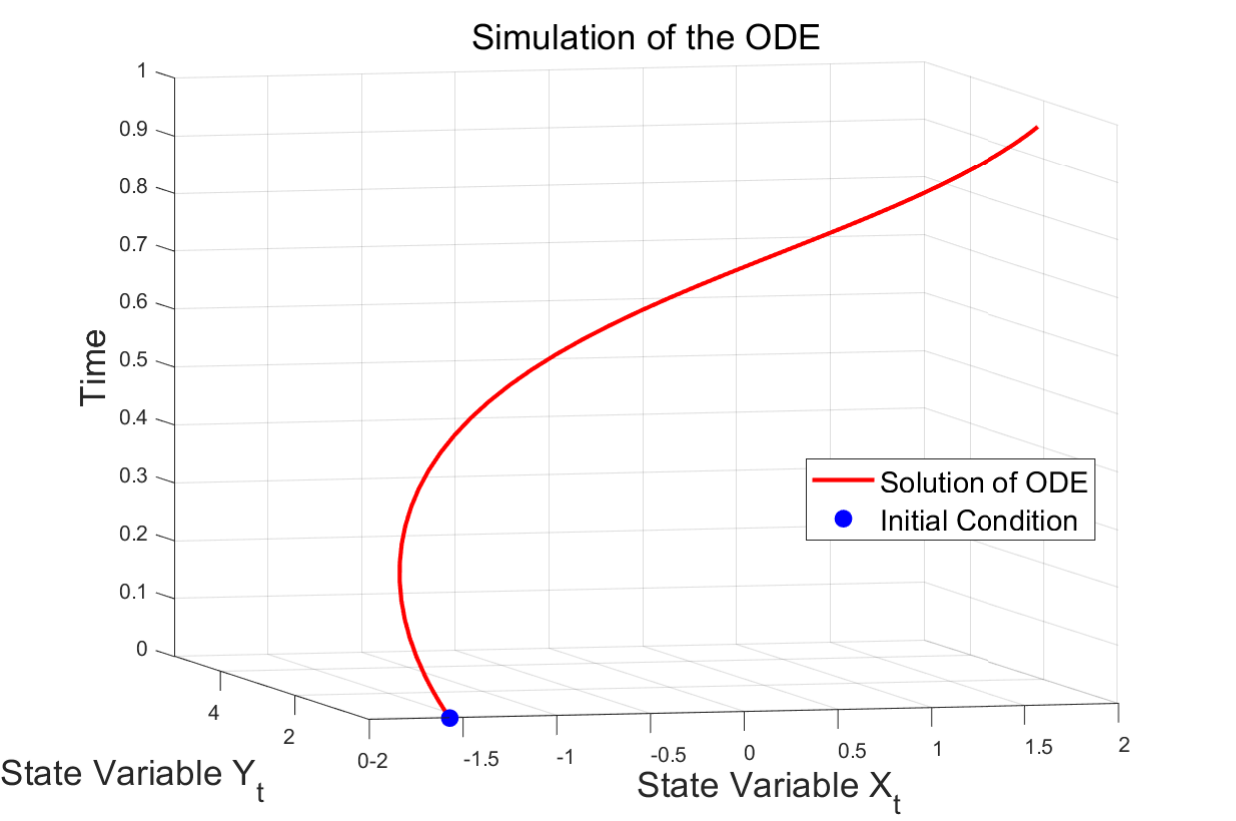}
    \caption{Phase portrait of the deterministic pendulum system \eqref{exam22}}
    \label{fig2}
\end{figure}

	\begin{figure*}[h]
		\centering
		\includegraphics[width=0.7\textwidth]{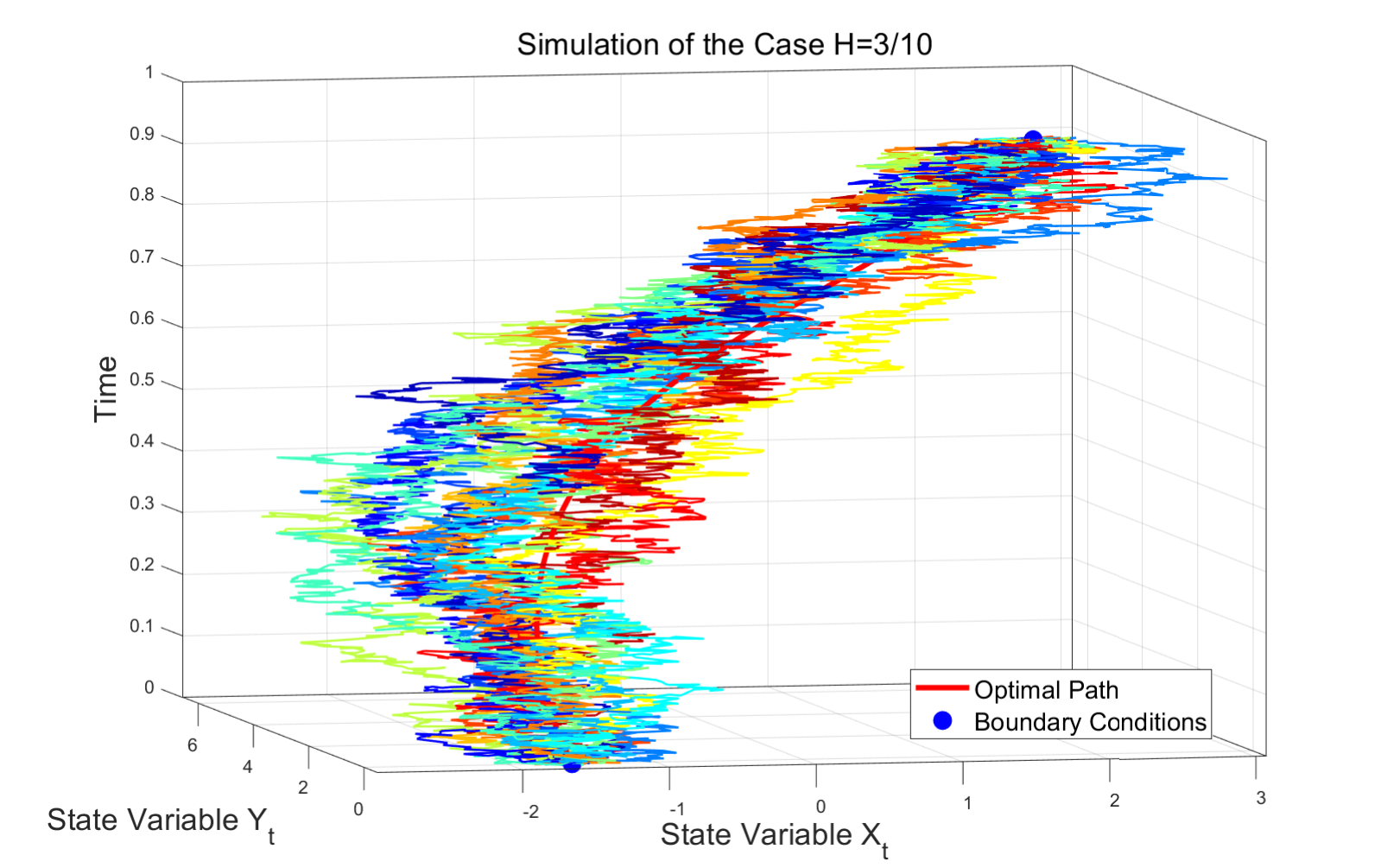}
	\end{figure*}

	\begin{figure*}[h]
		\centering
		\includegraphics[width=0.7\textwidth]{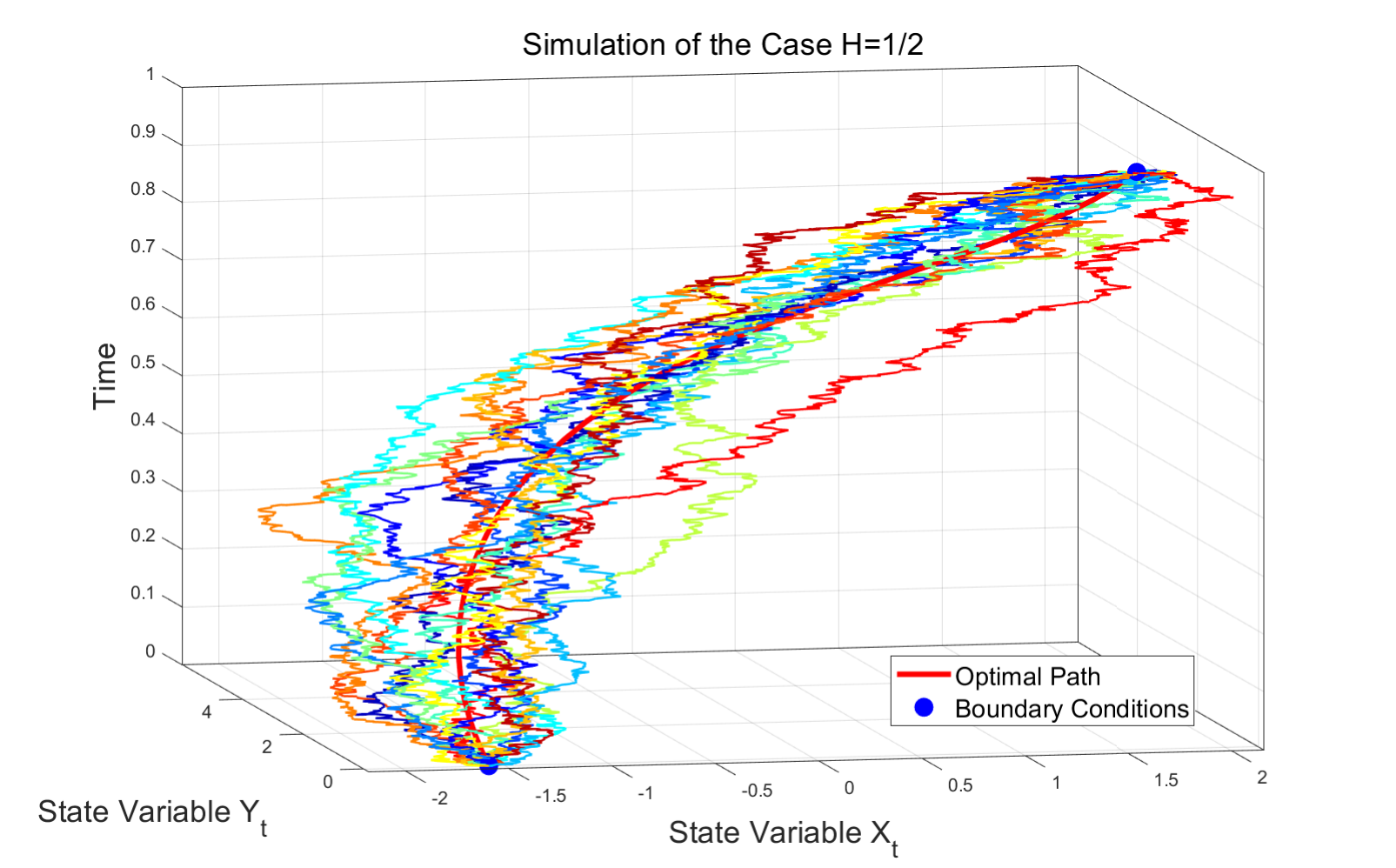}
	\end{figure*}
	
	\begin{figure}[h]
		\centering
		\includegraphics[width=0.7\textwidth]{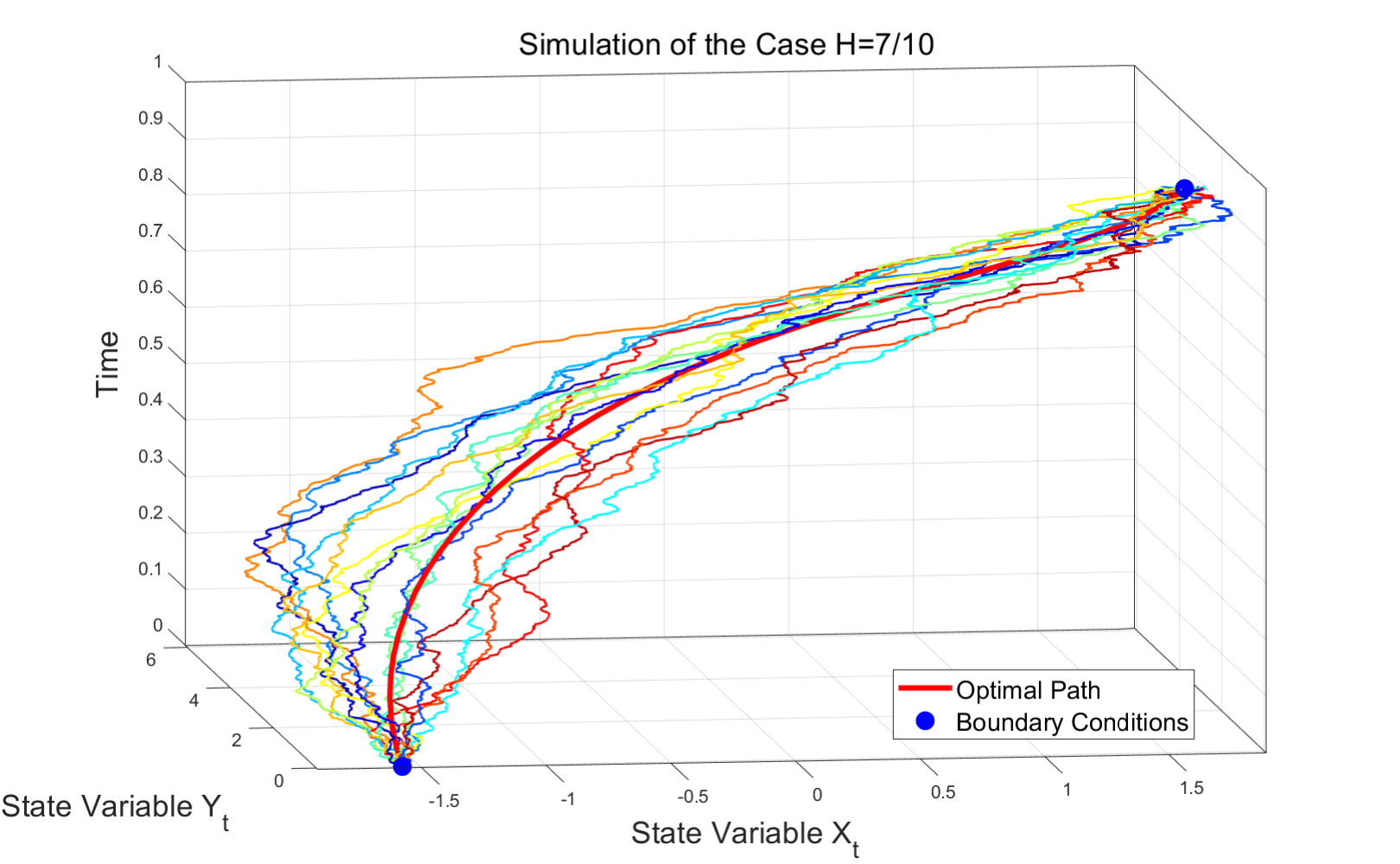}
		\caption{The stochastic process \( X_t \) follows equation \eqref{ex21}, transitioning from the initial state $(-\frac{\pi}{2},0)$ to the final state $(\frac{\pi}{2},0)$. The most probable path, \( \phi_t \) (depicted by the red line), is illustrated for \( H = \frac{3}{10} \), \( H = \frac{1}{2} \), and \( H = \frac{7}{10} \).}
    \label{fig3}	
    \end{figure}

\FloatBarrier
\bibliographystyle{plain}
\bibliography{math}

\end{document}